\theoremstyle{plain}
\newtheorem{thm}{Theorem}
\newtheorem{prop}[thm]{Proposition}
\newtheorem{lem}[thm]{Lemma}
\newtheorem{cor}[thm]{Corollary}
\theoremstyle{definition}
\newtheorem*{defn*}{Definition}
\newcommand{\length}{\mathsf{len}}
\newcommand{\dsup}{d_{\mathrm{sup}}}
\newcommand{\complex}{\mathbb{C}}
\newcommand{\disk}{\mathbb{D}}
\newcommand{\h}{\widehat}
\newcommand{\hull}{\mathrm{Hull}}
\newcommand{\shadow}{\mathrm{Sh}}
\begin{document}

\title{Shortest paths in arbitrary plane domains}
\author{L. C. Hoehn \and L. G. Oversteegen \and E. D. Tymchatyn}
%%% date
\date{November 30, 2018}

\address[L.\ C.\ Hoehn]{Nipissing University, Department of Computer Science \& Mathematics, 100 College Drive, Box 5002, North Bay, Ontario, Canada, P1B 8L7}
\email{loganh@nipissingu.ca}

\address[L.\ G.\ Oversteegen]{University of Alabama at Birmingham, Department of Mathematics, Birmingham, AL 35294, USA}
\email{overstee@uab.edu}

\address[E.\ D.\ Tymchatyn]{University of Saskatchewan, Department of Mathematics and Statistics, 106 Wiggins road, Saskatoon, Canada, S7N 5E6}
\email{tymchat@math.usask.ca}

\thanks{The first named author was partially supported by NSERC grant RGPIN 435518}
\thanks{The second named author was partially supported by NSF-DMS-1807558}
\thanks{The third named author was partially supported by NSERC grant OGP-0005616}

\subjclass[2010]{Primary 54F50; Secondary 51M25}
\keywords{path, length, shortest, plane domain, homotopy, analytic covering map}

\begin{abstract}
Let $\Omega$ be a connected open set in the plane and $\gamma: [0,1] \to \overline{\Omega}$ a path such that $\gamma((0,1)) \subset \Omega$.  We show that the path $\gamma$ can be ``pulled tight'' to a unique shortest path which is homotopic to $\gamma$, via a homotopy $h$ with endpoints fixed whose intermediate paths $h_t$, for $t \in [0,1)$, satisfy $h_t((0,1)) \subset \Omega$.  We prove this result even in the case when there is no path of finite Euclidean length homotopic to $\gamma$ under such a homotopy.  For this purpose, we offer three other natural, equivalent notions of a ``shortest'' path.  This work generalizes previous results for simply connected domains with simple closed curve boundaries.
\end{abstract}

\maketitle

\section{Introduction}
\label{sec:introduction}

Bourgin and Renz \cite{BourginRenz1989} proved that given a simply connected plane domain $\Omega$ with simple closed curve boundary, and given any two points $p,q \in \overline{\Omega}$, there exists a unique \emph{shortest} path in $\overline{\Omega}$ which is the uniform limit of paths which (except possibly for their endpoints $p$ and $q$) are contained in $\Omega$.  If there is a rectifiable such curve (i.e.\ one with finite Euclidean length), then by shortest is meant the one with the smallest Euclidean length.  If not, then by shortest is meant \emph{locally shortest}, which means that every subpath not containing the endpoints $p$ and $q$ is of finite Euclidean length and is shortest among all such paths joining the endpoints of the subpath.  A related result for $1$-dimensional locally connected continua is proved in \cite{CannonConnerZastrow2002}.  %A similar result is proved in \cite{Fabel1999}, and a related result for $1$-dimensional locally connected continua is proved in \cite{CannonConnerZastrow2002}.

In \cref{thm:main1}, we extend the result of Bourgin and Renz to paths in multiply connected domains with arbitrary boundaries.  Along the way, we characterize the concept of a shortest path in up to four different ways, outlined in the subsections below and stated in \cref{thm:main2}.  To state these concepts and our results precisely, we must fix some terminology and notation regarding paths and homotopies.

For notational convenience, we identify the plane $\mathbb{R}^2$ with the complex numbers $\complex$.  Fix a connected open set $\Omega \subset \complex$ and points $p,q \in \overline{\Omega}$.  We consider paths whose range is contained in $\Omega$, except that the endpoints of the path may belong to $\partial \Omega$.  For brevity, we use the abbreviation ``e.p.e.''\ to mean ``except possibly at endpoints''.  Formally, a \emph{path in $\Omega$ (e.p.e.)\ joining $p$ and $q$} is a continuous function $\gamma: [0,1] \to \complex$ such that $\gamma(0) = p$, $\gamma(1) = q$, and $\gamma(s) \in \Omega$ for all $s \in (0,1)$.  When $p,q \in \partial \Omega$, the existence of such a path is equivalent to the statement that $p$ and $q$ are \emph{accessible} from $\Omega$.

%We consider two types of homotopies of paths with endpoints fixed:
%\begin{itemize}
%\item First, homotopies $h: [0,1] \times [0,1] \to \complex$ such that for each $t \in [0,1]$, the path $h_t: [0,1] \to \complex$ defined by $h_t(s) = h(s,t)$ is a path in $\Omega$ (e.p.e.)\ joining $p$ and $q$.
%\item Second, homotopies $h: [0,1] \times [0,1] \to \complex$ such that for each $t \in [0,1)$ ($t=1$ excluded), the path $h_t$ is a path in $\Omega$ (e.p.e.)\ joining $p$ and $q$.
%\end{itemize}
%Given a path $\gamma$ in $\Omega$ (e.p.e.)\ joining $p$ and $q$, let $[\gamma]$ denote the set of all paths homotopic to $\gamma$ under the first type of homotopies, and let $\overline{[\gamma]}$ denote the set of all paths homotopic to $\gamma$ under the first type of homotopies.  Note that paths in $\overline{[\gamma]}$ may meet the boundary $\partial \Omega$ in more than just the endpoints.

We consider homotopies $h: [0,1] \times [0,1] \to \complex$ such that for each $t \in [0,1)$, the path $h_t: [0,1] \to \complex$ defined by $h_t(s) = h(s,t)$ is a path in $\Omega$ (e.p.e.)\ joining $p$ and $q$.  Given a path $\gamma$ in $\Omega$ (e.p.e.)\ joining $p$ and $q$, let $\overline{[\gamma]}$ denote the set of all paths homotopic to $\gamma$ under such homotopies.  Note that paths in $\overline{[\gamma]}$ may meet the boundary $\partial \Omega$ in more than just the endpoints.  Let $[\gamma]$ denote the set of all paths in $\Omega$ (e.p.e.)\ which belong to $\overline{[\gamma]}$.  Note that in spite of the notation, $\overline{[\gamma]}$ is not equal to the topological closure of $[\gamma]$ (e.g.\ in the function space).

The main result of this paper is that if $\gamma$ is a path in $\Omega$ (e.p.e.)\ joining $p$ and $q$, then $\overline{[\gamma]}$ contains a unique \emph{shortest} path.  By ``shortest'', we mean any one of the equivalent notions introduced next and collected in \cref{thm:main2} below.

First, if there is a path in $\overline{[\gamma]}$ of finite Euclidean length, then we may consider the path in $\overline{[\gamma]}$ of smallest Euclidean length.

Second, we adapt Bourgin \& Renz's condition of locally shortest as follows:

\begin{defn*}
\label{defn:locally shortest}
Let $\gamma$ be a path in $\Omega$ (e.p.e.)\ joining $p$ and $q$, and let $\lambda \in \overline{[\gamma]}$.
\begin{itemize}
\item Given $0 < s_1 < s_2 < 1$, we define the class $\overline{[\lambda^\gamma_{[s_1,s_2]}]}$ as follows.  Let $h: [0,1] \times [0,1] \to \overline{\Omega}$ be a homotopy such that $h_0 = \gamma$, $h_1 = \lambda$, and for each $t \in [0,1)$, $h_t$ is a path in $\Omega$ (e.p.e.)\ joining $p$ and $q$.  We use the homotopy $h$ to ``pull off'' the path $\lambda {\upharpoonright}_{[s_1,s_2]}$ (e.p.e.)\ from $\partial \Omega$ as follows.  Define the path $\lambda^\gamma_{[s_1,s_2]}: [s_1,s_2] \to \overline{\Omega}$ by
\[ \lambda^\gamma_{[s_1,s_2]}(s) = h(s, 1 - (s-s_1)(s-s_2)) .\]
See \cref{fig:locally shortest}.  Though this path is defined in terms of the homotopy $h$, the class $\overline{[\lambda^\gamma_{[s_1,s_2]}]}$ does not depend on the choice of $h$ (see \cref{cor:locally shortest well-defined} below).
\item A path $\lambda \in \overline{[\gamma]}$ is called \emph{locally shortest} if for any $0 < s_1 < s_2 < 1$, the path $\lambda {\upharpoonright}_{[s_1,s_2]}$ has finite Euclidean length, and this length is smallest among all paths in $\overline{[\lambda^\gamma_{[s_1,s_2]}]}$.
\end{itemize}
\end{defn*}

\begin{figure}
\begin{center}
\includegraphics{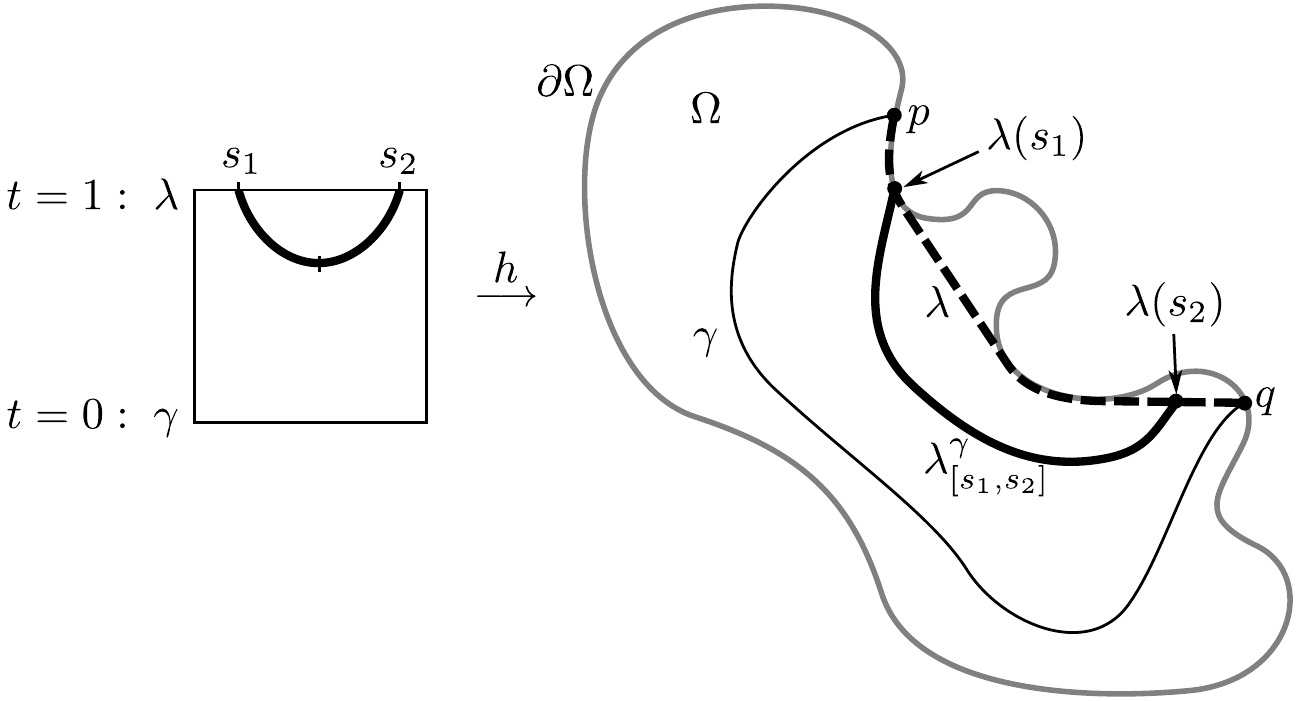}
\end{center}

\caption{An illustration of the paths $\gamma$, $\lambda$, and $\lambda^\gamma_{[s_1,s_2]}$, for the definition of a locally shortest path.}
\label{fig:locally shortest}
\end{figure}

Two further notions of ``shortest path'' are given in the next subsections.

\subsection{Efficient paths}
\label{sec:efficient}

As above, fix a connected open set $\Omega \subset \complex$ and points $p,q \in \overline{\Omega}$.

\begin{defn*}
\label{defn:efficient}
Let $\gamma$ be a path in $\Omega$ (e.p.e.)\ joining $p$ and $q$.  A path $\lambda \in \overline{[\gamma]}$ is called \emph{efficient} (in $\overline{[\gamma]}$) if the following property holds:
\begin{quote}
Given any $s_1,s_2 \in [0,1]$ with $s_1 < s_2$, let $\lambda'$ be the path defined by $\lambda'(s) = \lambda(s)$ for $s \notin [s_1,s_2]$, and $\lambda' {\upharpoonright}_{[s_1,s_2]}$ parameterizes the straight line segment $\overline{\lambda(s_1) \lambda(s_2)}$ or the constant path if $\lambda(s_1) = \lambda(s_2)$.  If $\lambda' \in \overline{[\gamma]}$, then $\lambda = \lambda'$ (up to reparameterization).
\end{quote}
\end{defn*}

The following is the first main result of this paper.  The proof is in \cref{sec:proof main1}.

\begin{thm}
\label{thm:main1}
Let $\Omega \subset \complex$ be a connected open set, let $p,q \in \overline{\Omega}$, and let $\gamma$ be a path in $\Omega$ (except possibly at endpoints) joining $p$ and $q$.  Then $\overline{[\gamma]}$ contains a unique (up to parameterization) efficient path.
\end{thm}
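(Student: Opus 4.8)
The plan is to pass to the universal analytic covering map $\pi \colon \disk \to \Omega$ (the degenerate cases $\Omega = \complex$, where the segment $\overline{pq}$ settles everything at once, and $\Omega = \complex \setminus \{z_0\}$, where $\disk$ is replaced by $\complex$, being treated directly). Fixing a lift $\h{p}$ of the initial point, every path in $\overline{[\gamma]}$ lifts to a path in $\overline{\disk}$ with the same fixed terminal lift $\h{q}$, and the homotopies defining $\overline{[\gamma]}$ lift to homotopies rel endpoints; thus $\overline{[\gamma]}$ is identified with a single rel-endpoints homotopy class upstairs, using accessibility of $p,q$ to make sense of the boundary lifts. Since $\pi$ is a local homeomorphism onto $\Omega$, the Euclidean length of a path $\lambda = \pi \circ \h\lambda$ lying in $\Omega$ equals the conformal length $\int |\pi'|\,|dw|$ along $\h\lambda$, and the straightening operation in the definition of an efficient path — applied on a subinterval whose image lies in $\Omega$ — is detected upstairs in the flat chart. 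I would therefore recast the problem as minimizing this conformal length within the fixed class, working on compact subintervals to sidestep the possibility of infinite total length.

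For existence I would build the candidate locally and then patch. For each $0 < s_1 < s_2 < 1$, the first notion of shortest (smallest Euclidean length) applies inside the local class $\overline{[\lambda^\gamma_{[s_1,s_2]}]}$, which is well defined by \cref{cor:locally shortest well-defined}; a minimizer there exists by an Arzel\`a--Ascoli argument after reparameterizing by normalized arc length, using the fixed lifted endpoints to obtain uniform bounds and equicontinuity, and lower semicontinuity of Euclidean length under uniform convergence to pass to the limit. The local minimizers for nested intervals agree on overlaps by uniqueness of the finite-length minimizer, so they assemble into a single path $\lambda$ on $(0,1)$, and continuity up to $p$ and $q$ follows from accessibility together with the covering description. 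It then remains to verify $\lambda \in \overline{[\gamma]}$, by concatenating the local pull-off homotopies along a sequence $s_1 \to 0$, $s_2 \to 1$ and controlling them uniformly, and that $\lambda$ is efficient, since any admissible straightening would strictly decrease the length on some compact subinterval, contradicting the local minimality established there.

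For uniqueness, suppose $\lambda_1 \neq \lambda_2$ are both efficient in $\overline{[\gamma]}$. Their lifts $\h{\lambda_1}, \h{\lambda_2}$ share endpoints $\h{p}, \h{q}$, and on a maximal subinterval where they differ they agree at the endpoints $a,b$ and bound a lens-shaped region $R$ in $\overline{\disk}$. If the projection of $R$ meets $\partial\Omega$ in no obstacle strictly between the two arcs, then the chord $\overline{ab}$ lies in $\overline{\Omega}$ and straightening one arc to $\overline{ab}$ produces a strictly shorter path still in the class, contradicting efficiency; the core is thus a strict-convexity statement, that the Euclidean segment is the unique shortest connection across an obstacle-free lens. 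The remaining possibility, that an obstacle component of $\complex \setminus \Omega$ separates the two arcs, is where the argument must work hardest, and I expect the convex-hull and shadow constructions hinted at by the notation $\hull$ and $\shadow$ to be the tools that force both efficient paths to wrap such an obstacle identically, yielding $\h{\lambda_1} = \h{\lambda_2}$ and hence $\lambda_1 = \lambda_2$ up to reparameterization.

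The main obstacle throughout is the arbitrary boundary $\partial\Omega$, which may be non-locally-connected, non-rectifiable, or of positive area. Because the limit path is only required to lie in $\overline{[\gamma]}$, where paths may meet $\partial\Omega$ away from the endpoints, and because the total length may be genuinely infinite, all length control must stay local, and one must guarantee that no length is lost into the boundary when passing to limits. The delicate points are therefore: (i) patching the local minimizers with matching boundary behavior where $\lambda$ meets $\partial\Omega$; (ii) constructing the global homotopy $h$ with each $h_t$ for $t < 1$ a path in $\Omega$ (e.p.e.)\ while allowing $h_1 = \lambda$ to touch $\partial\Omega$; and (iii) ruling out, in the uniqueness step, that two efficient paths negotiate a wild boundary obstacle in inequivalent ways. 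These boundary phenomena, rather than the interior geometry, are the heart of the proof.
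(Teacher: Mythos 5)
Your proposal has the right opening move (pass to the analytic covering of $\Omega$ and work with lifts pinned at $\h{p},\h{q}$, which is \cref{thm:class closure lift} in the paper, and the paper also first replaces $\Omega$ by $\Omega \cap D_\Omega$ so that the Fatou--Riesz boundary theory applies, a reduction your degenerate-case split does not cover for general unbounded $\Omega$), but the core of both halves has genuine gaps. In the existence half, the Arzel\`a--Ascoli scheme founders on closure of the class: a uniform limit of a minimizing sequence in $\overline{[\lambda^\gamma_{[s_1,s_2]}]}$ need not belong to that class, since membership is witnessed by lifts with prescribed endpoints and $\overline{\varphi}$ is discontinuous at $\partial\disk$, so the lifts of a uniformly convergent sequence need not converge; ``no length is lost into the boundary'' is exactly the unresolved point, not a technicality. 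The patching step is also incorrect as stated: the local minimizers for nested windows are pinned at different points ($\gamma(s_1)$ versus $\gamma(s_1')$), so they do not ``agree on overlaps'' --- the minimizer for the larger window has no reason to pass through $\gamma(s_1)$ at all, and assembling a limit as $s_1\to 0$, $s_2\to 1$ needs a stabilization argument you don't supply. The paper sidesteps minimization entirely: it characterizes efficiency upstairs (\cref{prop:efficient characterization}: $\lambda$ is efficient iff $\h{\lambda}^{-1}(\h{L})$ is connected for every lift $\h{L}$ of a line), then runs a recursion straightening $\h{\gamma}$ along a countable dense family of lifts of lines, with \cref{lem:connected intersection} guaranteeing the property, once achieved for $\h{L}_i$, persists; Cauchyness \emph{and} membership of the limit in $\overline{[\gamma]}$ come simultaneously from finite grids $\mathcal{G}_n$ whose regions $\h{R}_j$ project to sets of diameter $<\frac{1}{n}$, so consecutive paths are joined by homotopies moving no point more than $\frac{1}{n}$.

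In the uniqueness half, you concede the hard case: ``I expect the convex-hull and shadow constructions \dots\ to force both efficient paths to wrap such an obstacle identically'' is the statement to be proved, not a proof, and your reading of the notation is off --- $\hull$ is used in \cref{lem:hull continuous} to get continuity of $\overline{\varphi}$ on the hull of a lifted path, while $\shadow$ belongs to the stable-point machinery (\cref{lem:two lifts}, \cref{lem:stable generic}) that underlies the grid construction. The paper's uniqueness is not a lens/obstacle dichotomy at all: by \cref{prop:efficient characterization}, \emph{any} efficient path must cross the ordered separating lifts $\h{G}_1 < \cdots < \h{G}_m$ and traverse the same regions $\h{R}_1,\ldots,\h{R}_{m+1}$ in the same monotone order, so two efficient paths are within $\frac{1}{n}$ of each other for every $n$. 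Even your obstacle-free lens case is shaky: downstairs the two arcs may intersect each other, the lift of the chord $\overline{ab}$ need not stay in the lens, and --- importantly --- efficiency is a homotopy-class condition, not a length condition, so ``strictly shorter'' is not per se a contradiction; the equivalence of efficiency with length-minimality is \cref{thm:main2}, proved \emph{after} and \emph{using} the existence and uniqueness of the efficient path, so leaning on it inside the proof of \cref{thm:main1} risks circularity.
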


\subsection{Alternative notion of path length}
\label{sec:len}

Instead of the standard Euclidean path length, which can only distinguish between two paths if at least one of them is rectifiable, we can use an alternative notion of path length for which all paths have finite length, and which has other useful properties and many features in common with Euclidean length.  A notion of length with such properties was first introduced in \cite{Morse1936} and further developed in \cite{Silverman1969}.  A simlar notion is given in \cite{CannonConnerZastrow2002}, and the authors of the present paper modified and extended that notion in \cite{HOT2018-1}.

Let $\length$ refer to either the path length function introduced in \cite{HOT2018-1} or in \cite{Morse1936}.  The essential properties of $\length$ are:
\begin{enumerate}
\item $\length(\gamma)$ is finite (in fact $\length(\gamma) < 1$) for any path $\gamma$;
\item $\length(\gamma) = 0$ if and only if $\gamma$ is constant;
\item For any $p,q \in \complex$, the straight line segment $\overline{pq}$ has smallest $\length$ length among all paths from $p$ to $q$.  Moreover, if $\gamma$ is a path from $p$ to $q$ which deviates from the straight line segment $\overline{pq}$, or which is not monotone, then $\length(\gamma) > \length(\overline{pq})$;
\item If $\Phi: \complex \to \complex$ is an isometry, then $\length(\Phi \circ \gamma) = \length(\gamma)$;
\item Given $0 \leq c_1 < c_2 \leq 1$, $\length(\gamma {\upharpoonright}_{[c_1,c_2]}) \leq \length(\gamma)$.  This inequality is strict unless $\gamma$ is constant outside of $[c_1,c_2]$;
\item Given $c \in (0,1)$, $\length(\gamma) \leq \length(\gamma {\upharpoonright}_{[0,c]}) + \length(\gamma {\upharpoonright}_{[c,1]})$.  This inequality is strict unless one of the subpaths is constant;
\item $\length$ is a continuous function from the space of paths (with the uniform metric $\dsup$) to $\mathbb{R}$.
\end{enumerate}

Our second main result is the following.  The proof is in \cref{sec:proof main2}.

\begin{thm}
\label{thm:main2}
Let $\Omega \subset \complex$ be a connected open set, let $p,q \in \overline{\Omega}$, and let $\gamma$ be a path in $\Omega$ (except possibly at endpoints) joining $p$ and $q$.  Then for a path $\lambda \in \overline{[\gamma]}$, the following are equivalent:
\begin{enumerate}
\item $\lambda$ is locally shortest;
\item $\lambda$ is efficient; and
\item $\lambda$ has smallest $\length$ length among all paths in $\overline{[\gamma]}$.
\end{enumerate}
Moreover, if $\overline{[\gamma]}$ contains a rectifiable path, then in addition to the above we have
\begin{enumerate}
\setcounter{enumi}{3}
\item $\lambda$ has smallest Euclidean length among all paths in $\overline{[\gamma]}$.
\end{enumerate}
\end{thm}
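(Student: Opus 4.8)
The plan is to prove Theorem~\ref{thm:main2} by establishing a cycle of implications among conditions (1), (2), (3), together with a separate argument connecting (4) to the rectifiable case. Since Theorem~\ref{thm:main1} already gives existence and uniqueness of an efficient path in $\overline{[\gamma]}$, the most economical strategy is to show that each of the four conditions uniquely characterizes the same path, so that all conditions are equivalent by pinning down the same unique object. Concretely, I would aim for the implications $(3) \Rightarrow (2)$, $(2) \Rightarrow (1)$, and $(1) \Rightarrow (3)$, which closes the loop; then handle $(4)$ under the rectifiability hypothesis.

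First I would prove $(3) \Rightarrow (2)$. Suppose $\lambda$ minimizes $\length$ in $\overline{[\gamma]}$ but is not efficient. Then there exist $s_1 < s_2$ such that replacing $\lambda{\upharpoonright}_{[s_1,s_2]}$ by the straight segment $\overline{\lambda(s_1)\lambda(s_2)}$ yields a path $\lambda' \in \overline{[\gamma]}$ with $\lambda' \neq \lambda$. By property~(3) of $\length$, the straight segment is the unique $\length$-minimizer between its endpoints, and since $\lambda{\upharpoonright}_{[s_1,s_2]}$ deviates from or fails to be monotone along $\overline{\lambda(s_1)\lambda(s_2)}$, we get $\length(\lambda'{\upharpoonright}_{[s_1,s_2]}) < \length(\lambda{\upharpoonright}_{[s_1,s_2]})$. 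The subtle point is that I must upgrade this local strict inequality on the subpath to a strict inequality $\length(\lambda') < \length(\lambda)$ on the whole path; here properties~(5) and~(6), governing how $\length$ behaves under restriction and concatenation, are exactly what is needed, though the non-additivity of $\length$ means this requires care rather than a one-line estimate. This would contradict the minimality of $\lambda$, giving efficiency.

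Next I would address $(2) \Rightarrow (1)$ and $(1) \Rightarrow (3)$. For $(1) \Rightarrow (3)$ I would argue that a locally shortest path is a local minimizer of Euclidean length on every interior subinterval, and then show such a path must globally minimize $\length$; the idea is that if some $\mu \in \overline{[\gamma]}$ had strictly smaller $\length$, one could localize the discrepancy to an interior subinterval $[s_1,s_2]$ (using continuity and property~(5)) and produce a competitor in $\overline{[\lambda^\gamma_{[s_1,s_2]}]}$ of strictly smaller Euclidean length, contradicting local shortestness. This is where I expect the main obstacle to lie: reconciling the two genuinely different length functions (Euclidean on subpaths versus $\length$ globally) and verifying that the class $\overline{[\lambda^\gamma_{[s_1,s_2]}]}$ used in the definition of locally shortest is the right relative homotopy class to compare against — this presumably leans on Corollary~\ref{cor:locally shortest well-defined} and on a careful localization of the homotopy witnessing $\mu \in \overline{[\gamma]}$. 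The implication $(2) \Rightarrow (1)$ would follow by combining the efficient path's uniqueness with the fact that efficiency forces straight-segment behavior on every subinterval, which directly yields the local Euclidean-minimality defining locally shortest.

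Finally, for the rectifiable case I would show $(4) \Leftrightarrow (3)$ under the assumption that $\overline{[\gamma]}$ contains a rectifiable path. If some path in $\overline{[\gamma]}$ has finite Euclidean length, then the Euclidean-minimizer exists by a standard lower-semicontinuity and Arzelà--Ascoli compactness argument, and I would argue it coincides with the efficient path: a Euclidean-minimizer cannot admit a strictly shorter straight-segment replacement, so it is efficient, and by the uniqueness from Theorem~\ref{thm:main1} it equals the efficient path, which by the equivalences above is also the $\length$-minimizer. The converse uses that once rectifiable paths exist, the $\length$-minimizer, being efficient, is rectifiable and must then be the Euclidean-minimizer. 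The one genuinely delicate verification throughout is ensuring that the various minimizers remain inside $\overline{[\gamma]}$ and that limits of admissible homotopies are again admissible, so I would appeal wherever possible to the structural results already established preceding this theorem.
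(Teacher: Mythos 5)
Your cycle breaks at two places, and your treatment of the rectifiable case has a third, deeper gap. First, the proposed implication $(1)\Rightarrow(3)$ conflates the two length functions: from a competitor $\mu\in\overline{[\gamma]}$ with $\length(\mu)<\length(\lambda)$ you cannot ``localize the discrepancy'' and produce a path in $\overline{[\lambda^\gamma_{[s_1,s_2]}]}$ of strictly smaller \emph{Euclidean} length, because local shortestness is a Euclidean-length condition while your hypothesis is a $\length$ condition: the subpaths of $\mu$ may have infinite Euclidean length, and even when finite, smaller $\length$ does not imply smaller Euclidean length. So the contradiction with (1) never materializes. The paper does not attempt $(1)\Rightarrow(3)$; it proves $(1)\Rightarrow(2)$ (the easy direction) and obtains $(2)\Rightarrow(3)$ by a uniqueness trick your proposal lacks: if $\length(\rho)<\length(\lambda)$ for some $\rho\in\overline{[\gamma]}$, continuity of $\length$ yields $\rho'\in[\gamma]$ with $\length(\rho')<\length(\lambda)$, and re-running the straightening construction of \cref{thm:main1} starting from $\rho'$ --- each replacement being $\length$-non-increasing --- produces an efficient path of $\length$ at most $\length(\rho')$, which by uniqueness is $\lambda$ itself, a contradiction. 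Second, your $(2)\Rightarrow(1)$ (``efficiency forces straight-segment behavior \ldots directly yields local Euclidean-minimality'') skips the actual content of that implication: one must first show that interior subpaths of the efficient path have \emph{finite} Euclidean length, which requires exhibiting some finite-length path in the relative class at all. The paper does this via \cref{thm:crosscut}: when the lift of $\lambda$ lingers on $\partial\disk$, small crosscuts produce $s_1'\leq s_1<s_2\leq s_2'$ and a finite-Euclidean-length representative of $\overline{[\lambda^\gamma_{[s_1',s_2']}]}$, and only then can $(2)\Rightarrow(4)$ be applied to the subpath class. Nothing in your sketch addresses finiteness.

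Third, your route to (4) rests on an Arzel\`{a}--Ascoli existence argument for the Euclidean minimizer, which requires $\overline{[\gamma]}$ to be closed under uniform limits of minimizing sequences --- exactly the ``genuinely delicate verification'' you defer at the end, and nothing in the paper supplies it: the extended map $\overline{\varphi}$ is discontinuous on $\partial\disk$, so lifts of a uniformly convergent sequence need not converge, and even showing that the single limit path of the paper's own construction lies in $\overline{[\gamma]}$ consumed the entire regions apparatus of \cref{sec:convergence}. The paper sidesteps minimizer existence altogether: for $(2)\Rightarrow(4)$ it assumes a rectifiable $\rho\in\overline{[\gamma]}$ of length $E$ strictly below that of $\lambda$, fixes a partition with $\sum_{i=1}^N|\lambda(s_i)-\lambda(s_{i-1})|>E+\varepsilon$, and performs surgery on a lift of $\rho$ along non-separating lifts of lines through stable points (\cref{lem:stable generic}, \cref{prop:efficient characterization}) so that the modified path $\rho'\in\overline{[\gamma]}$ has Euclidean length at most $E$ yet passes within $\varepsilon/(2N)$ of every $\lambda(s_i)$ in order, forcing its length above $E$. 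This shadowing argument is the crux of the rectifiable case and is entirely absent from your proposal. (Your $(3)\Rightarrow(2)$, by contrast, is essentially the paper's; the paper treats the strict decrease of $\length$ under segment replacement as clear, where you rightly note it deserves care.)
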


\subsection*{Ackowledgements}

The authors would like to thank Professor Alexandre Eremenko for educating them on the history of analytic covering maps and the referee of an earlier version of this paper for several helpful comments which helped to clarify some of the arguments.

\section{Preliminaries on bounded analytic covering maps}
\label{sec:covering maps}

Our arguments in Sections \ref{sec:proof main1} and \ref{sec:proof main2} make heavy use of the theory of complex analytic covering maps.  In this section, we collect the results we will use later.

Denote $\disk = \{z \in \complex: |z| < 1\}$.  It is a standard classical result (see e.g.\ \cite{Ahlfors1973}) that for any connected open set $\Omega \subset \complex$ whose complement contains at least two points, and for any $z_0 \in \Omega$, there is a complex analytic covering map $\varphi: \disk \to \Omega$ such that $\varphi(0) = z_0$.  Moreover, this covering map $\varphi$ is uniquely determined by the argument of $\varphi'(0)$.

Many of the results below hold only for analytic covering maps $\varphi: \disk \to \Omega$ to bounded sets $\Omega$.  For the remainder of this subsection, let $\Omega \subset \complex$ be a bounded connected open set, and let $\varphi: \disk \to \Omega$ be an analytic covering map.

We first state three classic results about the boundary behavior of $\varphi$.  All of the subsequent results in this section will be derived from these and from standard covering map theory.

\begin{thm}[Fatou \cite{Fatou06}, see e.g.\ {\cite[p.22]{Conway1995}}]
\label{thm:Fatou}
The radial limits $\lim_{r \to 1^-} \varphi(r\alpha)$ exist for all points $\alpha \in \partial \disk$ except possibly for a set of linear measure zero in $\partial \disk$.
\end{thm}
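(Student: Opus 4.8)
The plan is to reduce the statement to the classical Fatou theorem for bounded harmonic functions and to prove the latter via the theory of Poisson integrals. The only property of $\varphi$ I shall use is that, since $\Omega$ is bounded, $\varphi$ is a bounded analytic function on $\disk$; the covering-map structure plays no role here. Writing $\varphi = u + iv$, the real and imaginary parts $u,v$ are bounded harmonic functions on $\disk$, and the radial limit of $\varphi$ at $\alpha \in \partial\disk$ exists precisely when the radial limits of both $u$ and $v$ exist there. Since a union of two sets of linear measure zero again has measure zero, it therefore suffices to show: every bounded harmonic function $w$ on $\disk$ has a radial limit $\lim_{r \to 1^-} w(r\alpha)$ for all $\alpha \in \partial\disk$ outside a set of linear measure zero.

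First I would represent $w$ as a Poisson integral. Since $w$ is bounded, the boundary dilates $f_\rho(\theta) = w(\rho e^{i\theta})$ form a bounded family in $L^\infty(\partial\disk) = \bigl(L^1(\partial\disk)\bigr)^*$, so by weak-$*$ compactness (Banach--Alaoglu) some sequence $\rho_n \to 1^-$ gives $f_{\rho_n} \rightharpoonup f$ weak-$*$ for some $f \in L^\infty(\partial\disk)$. Passing to the limit in the Poisson reproducing formula $w(z) = \frac{1}{2\pi}\int_{-\pi}^{\pi} P_{|z|/\rho_n}(\arg z - \theta)\, f_{\rho_n}(\theta)\, d\theta$ at a fixed interior point $z$ — using that the kernels $P_{|z|/\rho_n}$ converge in $L^1$ to $P_{|z|}$ while $f_{\rho_n}$ converges weak-$*$ — then yields $w = P[f]$, the Poisson integral of $f$.

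The heart of the argument is to show that the Poisson integral $P[f]$ of an $L^1$ boundary function has radial limit $f(\alpha)$ at every Lebesgue point $\alpha = e^{i\theta_0}$ of $f$. Writing $P_r$ for the Poisson kernel, I would use that $\{P_r\}_{r \to 1^-}$ is an approximate identity: $P_r \geq 0$, $\frac{1}{2\pi}\int_{-\pi}^{\pi} P_r = 1$, and $P_r$ concentrates near $s = 0$ (at scale $1-r$) while decaying to $0$ uniformly away from it. Estimating
\[ P[f](re^{i\theta_0}) - f(\theta_0) = \frac{1}{2\pi}\int_{-\pi}^{\pi} P_r(s)\,\bigl[f(\theta_0 - s) - f(\theta_0)\bigr]\,ds, \]
I would split the integral into the range $|s| < \delta$, where the Lebesgue-point condition controls the average of $|f(\theta_0 - s) - f(\theta_0)|$ against the kernel, and the range $|s| \geq \delta$, where the uniform decay of $P_r$ together with $f \in L^1$ makes the contribution vanish as $r \to 1^-$. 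Since almost every point of $\partial\disk$ is a Lebesgue point of $f$ by the Lebesgue differentiation theorem, this gives the radial limit a.e.

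The main obstacle is precisely this last step: converting the Lebesgue-point condition — a statement about averages of $f$ over shrinking symmetric intervals — into control of the kernel-weighted integral, since the Poisson kernel is not compactly supported. The clean way is to write the near-integral as a Stieltjes integral against the distribution function $\Psi(h) = \int_{-h}^{h} |f(\theta_0 - s) - f(\theta_0)|\, ds = o(h)$ and integrate by parts, using the bound $P_r(s) \leq C\,(1-r)/\bigl((1-r)^2 + s^2\bigr)$ and the integrability of $|P_r'|$; matching the concentration scale $1-r$ of the kernel to the rate in the Lebesgue-point estimate is the technical crux of the proof.
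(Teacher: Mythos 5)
Your proposal is correct, but there is nothing in the paper to compare it against: the paper does not prove this statement at all. It is quoted verbatim as a classical theorem of Fatou, with the proof delegated to the cited literature (Fatou's 1906 paper and Conway's book), and the paper's later arguments use it purely as a black box. What you have written is, in essence, the standard textbook proof that those references contain: the reduction to bounded harmonic functions via $\varphi = u + iv$ (correctly using only that $\Omega$ is bounded, which is exactly the standing hypothesis of this section of the paper --- the covering-map structure is indeed irrelevant), the Poisson representation $w = P[f]$ with $f \in L^\infty(\partial\disk)$ obtained by weak-$*$ compactness applied to the dilates $f_\rho$ (note that since $L^1(\partial\disk)$ is separable, the unit ball of $L^\infty$ is weak-$*$ metrizable, so extracting a sequence $\rho_n \to 1^-$ is legitimate), and the approximate-identity argument giving the radial limit $f(\theta_0)$ at every Lebesgue point of $f$. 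Your identification of the technical crux is also accurate, and your proposed resolution works: writing the near part as a Stieltjes integral against $\Psi(h) = o(h)$ and integrating by parts, the monotonicity of $P_r(s)$ in $|s|$ on $[0,\pi]$ gives $-P_r'(s) \geq 0$ there, so the bound $\Psi(s) \leq \epsilon s$ for $s \leq \delta$ yields a contribution of order $\epsilon$ uniformly in $r$, while the far part vanishes because $P_r \to 0$ uniformly on $\{|s| \geq \delta\}$; this is essentially Fatou's original argument (differentiability of $\int f$ at almost every boundary point, supplied by the Lebesgue differentiation theorem). The only gain in generality you have over the minimum needed here is that your Lebesgue-point argument handles $f \in L^1$, whereas the paper needs only the bounded case; conversely, the paper's choice to cite rather than prove is the right economy, since the result is genuinely classical and its proof is independent of everything else in the paper.
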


It can easily be seen that if $\alpha \in \partial \disk$ is such that $\lim_{r \to 1^-} \varphi(r\alpha)$ exists, then the limit belongs to $\partial \Omega$.

\begin{thm}[Riesz \cite{Riesz16, Riesz23}, see e.g.\ {\cite[p.22]{Conway1995}}]
\label{thm:Riesz}
For each $z \in \partial \Omega$, the set of points $\alpha \in \partial \disk$ for which $\lim_{r\to 1^-} \varphi(r\alpha) = z$ has linear measure zero in $\partial \disk$.
\end{thm}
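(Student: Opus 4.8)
The plan is to derive this from the classical boundary uniqueness theorem for bounded analytic functions (one of the two assertions usually grouped under the name of F.\ and M.\ Riesz), exploiting two features of the specific function at hand: since $\Omega$ is bounded, $\varphi$ is a \emph{bounded} analytic function, and since $z \in \partial\Omega$ lies outside the open image $\Omega = \varphi(\disk)$, the function $\varphi - z$ is \emph{zero-free} on $\disk$. Fix $z \in \partial\Omega$ and set $g = \varphi - z$. Then $g$ is bounded analytic on $\disk$; put $M = \log \sup_{w \in \disk}|g(w)| < \infty$. For a.e.\ $\alpha \in \partial\disk$ the radial boundary value $g^*(\alpha) = \varphi^*(\alpha) - z$ exists by \cref{thm:Fatou}, where $\varphi^*(\alpha) = \lim_{r\to 1^-}\varphi(r\alpha)$. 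Since $\varphi(w) \neq z$ for every $w \in \disk$ (as $z \notin \Omega$), $g$ has no zeros in $\disk$; in particular $g(0) = z_0 - z \neq 0$. The set whose measure we must bound is
\[ E = \{\alpha \in \partial\disk : g^*(\alpha) = 0\}, \]
since $g^*(\alpha) = 0$ exactly when $\lim_{r\to 1^-}\varphi(r\alpha) = z$, and the goal is to show $E$ has linear measure zero.

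Because $g$ is zero-free, $\log|g|$ is harmonic on $\disk$, so the mean value property yields, for every $r \in (0,1)$,
\[ m(r) := \frac{1}{2\pi}\int_0^{2\pi} \log\bigl|g(re^{i\theta})\bigr|\,d\theta = \log|g(0)| > -\infty. \]
As $\log|g| \le M$ on $\disk$, the functions $\theta \mapsto M - \log|g(re^{i\theta})|$ are nonnegative, so Fatou's lemma (letting $r \to 1^-$, using $g(re^{i\theta}) \to g^*(e^{i\theta})$ for a.e.\ $\theta$) gives
\[ \int_0^{2\pi}\bigl(M - \log|g^*(e^{i\theta})|\bigr)\,d\theta \le \liminf_{r\to 1^-}\int_0^{2\pi}\bigl(M - \log|g(re^{i\theta})|\bigr)\,d\theta = 2\pi\bigl(M - \log|g(0)|\bigr) < \infty. \]
Hence $\int_0^{2\pi}\log|g^*|\,d\theta > -\infty$, so $\log|g^*| \in L^1(\partial\disk)$ and in particular $\log|g^*(\alpha)| > -\infty$, i.e.\ $g^*(\alpha) \neq 0$, for a.e.\ $\alpha \in \partial\disk$. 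Thus $E$ has linear measure zero, as claimed.

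The only genuinely delicate point is the passage to the boundary encoded in the integrability of $\log|g^*|$; this is exactly where the hypothesis that $\Omega$ is bounded enters, since it supplies the upper bound $M$ and thereby the nonnegativity needed to apply Fatou's lemma. Had $\varphi - z$ possessed zeros in $\disk$, the harmonic mean value identity for $m(r)$ would be replaced by Jensen's formula, whose zero terms are nonnegative and still leave $m(r)$ bounded below by $\log|g(0)|$, so the same conclusion would follow; the zero-freeness of $\varphi - z$ simply removes this complication and reduces the argument to the elementary combination of the harmonic mean value property with Fatou's lemma. I would note finally that the contradiction form of the statement is equivalent: if $E$ had positive measure, then $g^* = 0$ on a set of positive measure would force the bounded analytic $g$ to vanish identically, contradicting that $\varphi$ is a nonconstant covering map onto the nonempty open set $\Omega$.
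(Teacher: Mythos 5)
Your argument is correct as it stands. One structural point first: the paper does not prove \cref{thm:Riesz} at all --- it is quoted as a classical theorem of F.\ and M.\ Riesz with a pointer to \cite{Conway1995} --- so there is no in-paper proof to compare against; what you have written is a self-contained derivation of cited background material. Your route is the standard one for this statement, with one genuine simplification worth noting: the general Riesz uniqueness theorem (a nonzero bounded analytic function cannot have vanishing radial limits on a set of positive measure) has to cope with zeros of the function, via Blaschke products or Jensen's formula; you correctly observe that for the specific function $g = \varphi - z$ no zeros occur, since $z \in \partial\Omega$ lies outside $\varphi(\disk) = \Omega$, so $\log|g|$ is genuinely harmonic and the exact mean value identity replaces Jensen's inequality. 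Combined with the upper bound $M$ (this is precisely where the standing assumption in \cref{sec:covering maps} that $\Omega$ is bounded enters) and Fatou's lemma applied along a sequence $r_n \to 1^-$, this gives $\log|g^*| \in L^1(\partial\disk)$, hence $g^*(\alpha) \neq 0$ for almost every $\alpha$ at which it is defined; that is exactly the assertion, since the exceptional set $E$ consists of points where $g^*$ exists and equals $0$ (and there the integrand $M - \log|g^*|$ is $+\infty$, forcing $E$ to be null). Your parenthetical fallback via Jensen's formula, using $g(0) = z_0 - z \neq 0$, is also right. The only thing to flag is your closing sentence: the implication that $g^* = 0$ on a set of positive measure forces $g \equiv 0$ is the Riesz theorem itself, so it is an equivalent restatement rather than an independent argument; since you offer it only as a reformulation and not as the proof, the proof as given is complete.
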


\begin{thm}[Lindel\"{o}f \cite{Lindelof1915}, see e.g.\ {\cite[p.23]{Conway1995}}]
\label{thm:Lindelof}
Let $\h{\gamma}: [0,1] \to \overline{\disk}$ be a path such that $\h{\gamma}([0,1)) \subset \disk$ and $\h{\gamma}(1) = \alpha \in \partial \disk$.  Suppose that $\lim_{t \to 1^-} \varphi \circ \h{\gamma}(t)$ exists.  Then the radial limit $\lim_{r\to 1^-} \varphi(r\alpha)$ exists and is equal to $\lim_{t \to 1^-} \varphi \circ \h{\gamma}(t)$.
\end{thm}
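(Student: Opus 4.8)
The plan is to discard the covering-map structure and prove the statement as the classical Lindelöf principle for the bounded analytic function $\varphi$, using that $\Omega$ is bounded. After a rotation of $\disk$ I may assume $\alpha = 1$, and writing $L = \lim_{t \to 1^-} \varphi \circ \h{\gamma}(t)$, I may replace $\varphi$ by $(\varphi - L)/M$, where $M$ bounds $|\varphi - L|$ on $\disk$. The new function is again analytic and bounded by $1$, its limit along $\h{\gamma}$ is $0$, and the goal becomes to show that $\lim_{r \to 1^-}\varphi(r) = 0$.

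The key device is Schwarz reflection. Define $\psi(z) = \overline{\varphi(\bar z)}$, which is analytic on $\disk$ and also bounded by $1$, and set $F = \varphi\,\psi$, so that $|F| \le 1$ on $\disk$. For real $r \in (0,1)$ one has $\psi(r) = \overline{\varphi(r)}$, so
\[ F(r) = \varphi(r)\,\overline{\varphi(r)} = |\varphi(r)|^2 . \]
Thus it suffices to prove that $F(r) \to 0$ as $r \to 1^-$. Given $\epsilon > 0$, choose $t_0 < 1$ with $|\varphi \circ \h{\gamma}(t)| < \epsilon$ for all $t \in [t_0, 1]$, write $C = \h{\gamma}([t_0,1])$ for the corresponding tail of the curve (meeting $\partial\disk$ only at $1$), and let $\bar C = \{\bar z : z \in C\}$ be its reflection, which also ends at $1$. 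On $C$ one has $|F| \le |\varphi| < \epsilon$, while on $\bar C$, since $z \in \bar C$ forces $\bar z \in C$, one has $|\psi(z)| = |\varphi(\bar z)| < \epsilon$ and hence $|F(z)| < \epsilon$ as well. So $|F| < \epsilon$ on $C \cup \bar C$.

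The final step is to produce a subregion $G$ of $\disk$, lying near $1$ and containing a real segment $(r_0,1)$, whose boundary relative to $\disk$ is contained in $C \cup \bar C$. The maximum modulus principle then gives $|F| < \epsilon$ throughout $G$, whence $|\varphi(r)|^2 < \epsilon$ for $r$ near $1$; letting $\epsilon \to 0$ finishes the proof. I expect this last step to be the main obstacle: the path $\h{\gamma}$ need not be a simple arc, so the ``lens-shaped'' region bounded by $C$ and $\bar C$ is not immediately available, and genuine topological care is required to extract it — for instance, by passing to a suitable simple crosscut subarc of $C$ joining a real point to $1$, or by controlling $F$ on the relevant complementary component via harmonic measure.

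A robust alternative that sidesteps the explicit construction of $G$ is a compactness argument, replacing the maximum principle by the observation that the translates $\varphi \circ \tau$, over disk automorphisms $\tau$ fixing $1$ and pushing $0$ toward $1$, form a normal family by boundedness; one then forces every subsequential limit to carry the boundary value $0$. This avoids the region but reintroduces comparable bookkeeping in tracking which boundary data survive in the limit, so I would favour the reflection-plus-maximum-principle route and concentrate the effort on making the region $G$ precise.
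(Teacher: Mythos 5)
The paper offers no proof of this statement: it is imported as classical background (Fatou, Riesz, and Lindel\"{o}f are all quoted with references, here to \cite[p.23]{Conway1995}), so the benchmark is the standard textbook argument --- and that is exactly the route you chose. Your reflection-product trick is the classical proof of Lindel\"{o}f's theorem (it is, for instance, how Rudin proves it), and your reductions are all sound: $\Omega$ is bounded by standing assumption in this section, so $\varphi$ is a bounded analytic function and the covering structure is irrelevant; the normalization to $\alpha = 1$, $L = 0$, $|\varphi| \le 1$ is legitimate; $\psi(z) = \overline{\varphi(\bar z)}$ is analytic; and $F = \varphi\psi$ satisfies $|F| \le 1$, $F(r) = |\varphi(r)|^2$, and $|F| < \epsilon$ on $C \cup \bar C$ since each factor is bounded by $1$. (Minor slip: $|\varphi \circ \h{\gamma}(t)| < \epsilon$ can be asserted only for $t \in [t_0,1)$, as $\varphi$ is undefined at $\h{\gamma}(1) \in \partial\disk$.)

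However, the gap you flag at the end is genuine, and the step as you formulated it would actually fail. A region $G$ containing a real segment $(r_0,1)$ whose boundary \emph{relative to $\disk$} lies in $C \cup \bar C$ always exists trivially (any component of $\disk \smallsetminus (C \cup \bar C)$ meeting the segment works), but the maximum modulus principle requires control of the \emph{full} boundary, and $\partial G \cap \partial\disk$ is never empty: it contains at least the point $1$, where $F$ is merely bounded with no known limit, and it can contain much more. Concretely, let the tail $C$ be the straight segment from $\tfrac{1}{2}(1+i)$ to $1$; then $C \cup \bar C$ is a ``V'' meeting the real axis only at $1$, the set $\disk \smallsetminus (C \cup \bar C)$ is connected (one escapes through the open mouth of the V past the interior endpoints), and the unique component containing $(0,1) \smallsetminus \{1\}$ has \emph{all} of $\partial\disk$ in its boundary --- so the maximum principle returns only $|F| \le 1$. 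Closing this gap is the actual content of Lindel\"{o}f's theorem, and it needs one of the standard repairs: either cut down --- via a simple crosscut subarc extracted from the possibly non-simple tail, intersected with a small disk about $1$ --- to a Jordan subdomain whose closure meets $\partial\disk$ only at $1$, and then invoke not the plain maximum principle but its version for bounded analytic functions with a one-point exceptional boundary set (e.g.\ apply the maximum principle to $F(z)\left(\frac{1-z}{2}\right)^\delta$ and let $\delta \to 0^+$, a one-point set having harmonic measure zero); or run the two-constants theorem, $\log|F(z)| \le \omega(z)\log\epsilon$ with $\omega$ the harmonic measure of the curve part of the boundary, together with a lower bound on $\omega(r)$ as $r \to 1^-$. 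Your normal-families fallback has the same hole: boundedness produces subsequential limits, but without a quantitative boundary estimate nothing forces those limits to remember the value $0$ carried along the curve. In short: right classical strategy, correct algebraic reductions, but incomplete at precisely the step that constitutes the theorem.
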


We define the \emph{extended covering map}
\[ \overline{\varphi}: \disk \cup \{\alpha \in \partial \disk: \lim_{r \to 1^-} \varphi(r\alpha) \textrm{ exists}\} \to \overline{\Omega} \]
by
\[ \overline{\varphi}(\h{z}) = \begin{cases}
\varphi(\h{z}) & \textrm{if } \h{z} \in \disk \\
\lim_{r \to 1^-} \varphi(r\alpha) & \textrm{if } \h{z} = \alpha \in \partial \disk .
\end{cases} \]
By \cref{thm:Fatou}, this function is defined on $\disk$ plus a full measure subset of $\partial \disk$.  Note that this function $\overline{\varphi}$ is not necessarily continuous at points where it is defined in $\partial \disk$.  It is, however, continuous by definition along each radial segment from the center $0$ of $\disk$ to any point $\alpha \in \partial \disk$ where it is defined.  In fact, more is true: if $\overline{\varphi}$ is defined at $\alpha$, then its restriction to any \emph{Stolz angle} at $\alpha$ is continuous (see e.g.\ \cite[p.23]{Conway1995}); however, we will not need this concept in this paper.

%%%Examples about continuity and being defined at boundary points?
%For readers unfamiliar with these ideas, we offer three examples in \cref{fig:?} to illustrate cases where the extended map can be extended to a boundary point or not, and when this extension is continuous or not.
% Example: exp((z-1)/(z+1)) is an analytic covering map from \disk to the punctured disk \Omega = \disk \smallsetminus \{0\}.  A path joining $0$ to eq $1$ in \Omega has countably many lifts emanating from -1 (the only lift of 0).

As a general convention, we will put hats on symbols, as in $\h{z}$, $\h{A}$, or $\h{\gamma}$, to refer to points, subsets, or paths in $\overline{\disk}$, and use symbols without hats to refer to points, subsets, or paths in $\overline{\Omega} \subset \complex$, with the understanding that if both $z$ and $\h{z}$ appear in an argument, they are related by $z = \overline{\varphi}(\h{z})$ (and likewise for sets and paths).  If $z = \overline{\varphi}(\h{z})$ for points $z \in \overline{\Omega}$ and $\h{z} \in \overline{\disk}$ (respectively, $A = \overline{\varphi}(\h{A})$ for sets $A \subset \overline{\Omega}$ and $\h{A} \subset \overline{\disk}$), we say $\h{z}$ is a \emph{lift} of $z$ (respectively, $\h{A}$ is a \emph{lift} of $A$).  Similarly, if $\gamma = \overline{\varphi} \circ \h{\gamma}$ for paths $\gamma$ in $\overline{\Omega}$ and $\h{\gamma}$ in $\overline{\disk}$, we say $\h{\gamma}$ is a \emph{lift} of $\gamma$.

The next result about lifts of paths is very similar to classical results for covering maps.  Since our extended map $\overline{\varphi}$ is not necessarily continuous at points in $\partial \disk$, it is not a simple consequence of basic covering map theory.  It is proved in \cite{HOT2018-2}, and we also include a proof here to convey the flavor of working with the extended map $\overline{\varphi}$.

\begin{thm}
\label{thm:lift}
Let $p,q \in \overline{\Omega}$, let $z \in \Omega$, and let $\h{z} \in \disk$ such that $\varphi(\h{z}) = z$.
\begin{enumerate}
\item Let $\gamma$ be a path in $\Omega$ (e.p.e.)\ joining $p$ and $q$, and suppose $\gamma(s_0) = z$ for some $s_0 \in [0,1]$.  Then there exists a unique path $\h{\gamma}$ in $\disk$ (e.p.e.)\ such that $\overline{\varphi} \circ \h{\gamma} = \gamma$ and $\h{\gamma}(s_0) = \h{z}$.
\item Let $h: [0,1] \times [0,1] \to \complex$ be a homotopy such that for each $t \in [0,1]$ the path $h_t(s) = h(s,t)$ is a path in $\Omega$ (e.p.e.)\ joining $p$ and $q$, and suppose $h(s_0,0) = z$ for some $s_0 \in [0,1]$.  Then there exists a unique homotopy $\h{h}: [0,1] \times [0,1] \to \overline{\disk}$ such that $\h{h}(s,t) \in \disk$ and $\varphi \circ \h{h}(s,t) = h(s,t)$ for each $(s,t) \in (0,1) \times [0,1]$, $\h{h}(\{0\} \times [0,1])$ is a single point $\h{p}$ in $\overline{\disk}$ with $\overline{\varphi}(\h{p}) = p$, $\h{h}(\{1\} \times [0,1])$ is a single point $\h{q}$ in $\overline{\disk}$ with $\overline{\varphi}(\h{q}) = q$, and $\h{h}(s_0,0) = \h{z}$.
\end{enumerate}
\end{thm}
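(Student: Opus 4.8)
The plan is to prove both parts by first establishing the lifting statement on the open region, where standard covering space theory applies directly to the genuine covering map $\varphi \colon \disk \to \Omega$, and then handling the endpoints separately using the radial-limit machinery and Lindel\"of's theorem (\cref{thm:Lindelof}). For part (1), I would restrict attention to $\gamma {\upharpoonright}_{(0,1)}$, which maps into $\Omega$, and invoke the classical unique path lifting property of covering maps to produce a continuous lift $\h{\gamma}$ on $(0,1)$ with $\h{\gamma}(s_0) = \h{z}$. The substance of the argument is then showing this lift extends continuously to the closed interval with endpoint values in $\overline{\disk}$ satisfying $\overline{\varphi}(\h{\gamma}(0)) = p$ and $\overline{\varphi}(\h{\gamma}(1)) = q$.

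For the endpoint extension, I would argue that $\lim_{s \to 0^+} \h{\gamma}(s)$ exists in $\overline{\disk}$. First I would rule out that $\h{\gamma}$ oscillates or accumulates on a nondegenerate subset of $\partial \disk$: if the limit failed to exist, the accumulation set of $\h{\gamma}(s)$ as $s \to 0^+$ would be a nondegenerate continuum in $\partial \disk$ (since $\gamma(s) \to p$ forces the lift toward the boundary). Applying $\overline{\varphi}$ along $\h{\gamma}$, we know $\varphi \circ \h{\gamma}(s) = \gamma(s) \to p$, so by \cref{thm:Lindelof} every accumulation point $\alpha \in \partial \disk$ of $\h{\gamma}$ that is approached along $\h{\gamma}$ must satisfy $\lim_{r \to 1^-} \varphi(r\alpha) = p$, i.e.\ the radial limit exists and equals $p$. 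But \cref{thm:Riesz} says the set of such $\alpha$ has linear measure zero, which contradicts its being a nondegenerate continuum (hence of positive measure). Therefore $\h{\gamma}$ converges to a single point $\h{p} = \h{\gamma}(0) \in \overline{\disk}$, and \cref{thm:Lindelof} gives $\overline{\varphi}(\h{p}) = p$; the same argument handles the endpoint $s = 1$. Uniqueness of $\h{\gamma}$ follows from uniqueness of lifts on $(0,1)$ together with the fact that the endpoints are determined as limits.

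For part (2), the homotopy is a map on $[0,1] \times [0,1]$ whose interior $(0,1) \times [0,1]$ maps into $\Omega$ (after deleting the degenerate endpoint slices). I would lift $h {\upharpoonright}_{(0,1) \times [0,1]}$ using the covering homotopy theorem for the genuine covering $\varphi$, normalizing by $\h{h}(s_0, 0) = \h{z}$; this yields a continuous $\h{h}$ on $(0,1) \times [0,1]$ with $\varphi \circ \h{h} = h$. For each fixed $t$, the slice $\h{h}_t$ is the lift of the path $h_t$ given by part (1), so by the endpoint analysis above each slice converges at $s = 0$ and $s = 1$ to boundary lifts of $p$ and $q$. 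The remaining task is to show these limit points are independent of $t$, so that $\h{h}(\{0\} \times [0,1])$ is the single point $\h{p}$ and $\h{h}(\{1\} \times [0,1])$ is the single point $\h{q}$. I would obtain this from the fact that the left-endpoint lifts $\h{h}(0,t)$ vary continuously in $t$ (the full homotopy $h$ is continuous up to the boundary, and the collection of admissible boundary lifts of $p$ is discrete in the relevant sense, being governed by the deck transformation group acting on the fiber) and must therefore be constant on the connected interval $[0,1]$.

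The main obstacle I expect is precisely this endpoint continuity in the homotopy parameter for part (2): unlike the interior lift, where standard covering theory gives joint continuity for free, the boundary values $\h{h}(0,t)$ are defined only as one-sided limits of the interior lift, and $\overline{\varphi}$ is not continuous on $\partial \disk$, so I cannot directly appeal to continuity of $\overline{\varphi}$. The clean way around this is a connectedness-plus-discreteness argument: show that as $t$ varies, the boundary endpoint can only move within the fiber $\overline{\varphi}^{-1}(p) \cap \partial \disk$ in a way that is locally constant, using that nearby slices $h_t$ are uniformly close and lift to uniformly close interior paths (by uniqueness and the covering structure), which pins down the boundary limit up to the deck action; since $[0,1]$ is connected, the endpoint must be globally constant. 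This is the delicate step that distinguishes the extended-map setting from the classical covering homotopy theorem.
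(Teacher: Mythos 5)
There are two genuine gaps, both at the crux of the argument. First, in your endpoint analysis for part (1), you misapply \cref{thm:Lindelof}. Lindel\"of's theorem requires a path that actually \emph{lands} at $\alpha$ (i.e.\ $\h{\gamma}(s) \to \alpha$ with $\h{\gamma}([0,1)) \subset \disk$); it says nothing about points that are merely accumulation points of an oscillating lift, and in particular it does not grant that the radial limit even \emph{exists} at every accumulation point. So your claim that every point of the accumulation continuum $K$ has radial limit $p$ is unjustified. The paper's repair is to invoke \cref{thm:Fatou} first: radial limits exist on a positive-measure set $E$ in the interior of $K$, and since the connected image of the lift accumulates on all of $K$, it must cross the radial segment at each $\alpha \in E$ at radii tending to $1$; the already-existing radial limit at $\alpha$ is then forced to equal $p$, and \cref{thm:Riesz} gives the contradiction. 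Your skeleton (nondegenerate accumulation continuum contradicts Riesz) is the right one, but the bridge from ``accumulation point'' to ``radial limit equals $p$'' must go through Fatou, not Lindel\"of; Lindel\"of is needed only at the very end, to conclude $\overline{\varphi}(\h{p}) = p$ once convergence of the lift is established.

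Second, your ``connectedness-plus-discreteness'' argument for part (2) rests on a false premise: the boundary fiber $\overline{\varphi}^{-1}(p) \cap \partial \disk$ is in general \emph{not} discrete. The deck group of $\varphi$ is a Fuchsian group, and its orbits on $\partial \disk$ accumulate on the limit set; Riesz only guarantees the fiber has linear measure zero, and it can perfectly well have accumulation points in itself. Moreover, the local constancy of $t \mapsto \h{h}(0,t)$ is precisely what needs proof — interior lifts of nearby slices are close only on compact subsets of $(0,1)$, not uniformly near $s=0$ — so nothing pins down the boundary value without further argument. The paper sidesteps the whole issue by reversing your order of deduction: it proves (2) directly and obtains (1) as the special case of a constant homotopy. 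Concretely, it lifts $h$ on all of $(0,1) \times [0,1]$ and applies the Fatou/Riesz accumulation argument to the entire two-dimensional set $\h{h}((0,\tfrac{1}{2}] \times [0,1])$, whose boundary accumulation set is a single continuum $K$ (a nested intersection of closures of connected sets); collapsing $K$ to a point in one stroke proves simultaneously that the extension is continuous \emph{and} that $\h{h}(\{0\} \times [0,1])$ is a single point, with no discreteness or per-slice matching needed. Your per-slice decomposition could in principle be completed, but only by an argument of essentially this global type, since the discreteness you invoke is unavailable.
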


\begin{proof}
Observe that (1) follows from (2), by using the constant homotopy $h(s,t) = \gamma(s)$ for all $t \in [0,1]$.  Thus it suffices to prove (2).

Since $\varphi$ is a covering map and $h((0,1) \times [0,1]) \subset \Omega$, it follows from standard covering space theory that there is a unique homotopy $\h{h}: (0,1) \times [0,1] \to \disk$ such that $\varphi \circ \h{h}(s,t) = h(s,t)$ for each $(s,t) \in (0,1) \times [0,1]$, and $\h{h}(s_0,0) = \h{z}$.  It remains to prove that there are points $\h{p},\h{q} \in \disk$ such that defining $\h{h}(\{0\} \times [0,1]) = \{\h{p}\}$ and $\h{h}(\{1\} \times [0,1]) = \{\h{q}\}$ makes $\h{h}$ into a continuous function from $[0,1] \times [0,1]$ to $\overline{\disk}$.  This is immediate if $p$ and $q$ belong to $\Omega$, so we assume $p,q \in \partial \Omega$.

Observe that the set $\h{h}((0,\frac{1}{2}] \times [0,1])$ compactifies on a continuum $K \subset \partial \disk$.  We need to prove that $K$ is a single point $\{\h{p}\}$.  Suppose for a contradication that $K$ contains more than one point.  Then there exists by \cref{thm:Fatou} a set $E$ of positive measure in the interior of $K$ so that for each $\alpha \in E$, the radial limit $\lim_{r \to 1^-} \varphi(r\alpha)$ exists.  Since the set $\h{h}((0,\frac{1}{2}] \times [0,1])$ compactifies on $K$ we can choose, for each $\alpha \in E$, a sequence $(s_n,t_n)$ in $(0,\frac{1}{2}] \times [0,1]$ such that $s_n \to 0$ and $\h{h}(s_n,t_n) = r_n \alpha$, with $r_n \to 1$.  It follows that the radial limit $\lim_{r \to 1^-} \varphi(r\alpha) = p$ for each $\alpha \in E$, a contradiction with \cref{thm:Riesz}.  Thus $K$ is a single point $\{\h{p}\}$, and so we can continuously extend $\h{h}$ to $\{0\} \times [0,1]$ by defining $\h{h}(\{0\} \times [0,1]) = \{\h{p}\}$.  By \cref{thm:Lindelof} it follows that $\overline{\varphi}(\h{p}) = p$.

Likewise, by considering $\h{h}([\frac{1}{2},1) \times [0,1])$ we obtain by the same argument a point $\h{q} \in \overline{\disk}$ such that $\h{h}$ extends continuously to $\{1\} \times [0,1]$ by defining $\h{h}(\{1\} \times [0,1]) = \{\h{q}\}$, and $\overline{\varphi}(\h{q}) = q$.
\end{proof}

\cref{thm:lift}(2) implies that if $p,q \in \overline{\Omega}$, $\gamma$ is a path in $\Omega$ (e.p.e.)\ joining $p$ and $q$, $\h{\gamma}$ is a lift of $\gamma$ with endpoints $\h{p},\h{q} \in \overline{\disk}$, and $\lambda \in [\gamma]$, then there exists a lift $\h{\lambda}$ of $\lambda$ with the same endpoints $\h{p},\h{q}$ as $\h{\gamma}$.  We will prove a stronger statement in \cref{thm:class closure lift} below.

The next result follows immediately from \cref{thm:lift}(1).

\begin{cor}
\label{cor:line lift}
Let $L$ be an open arc in $\Omega$ whose closure is an arc with distinct endpoints in $\partial \Omega$.  Let $\h{L}$ be a component of $\varphi^{-1}(L)$.  Then $\h{L}$ is an open arc in $\disk$ whose closure in $\overline{\disk}$ is an arc with distinct endpoints in $\partial \disk$.
\end{cor}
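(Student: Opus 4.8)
The plan is to realize the arc $\overline{L}$ as an injective path and then lift it using \cref{thm:lift}(1), identifying the lift with a parameterization of $\h{L}$ that extends continuously to the boundary circle. Write $a,b \in \partial\Omega$ for the two distinct endpoints of $\overline{L}$, and fix an injective parameterization $\gamma : [0,1] \to \overline{\Omega}$ of $\overline{L}$ with $\gamma(0) = a$, $\gamma(1) = b$, and $\gamma((0,1)) = L$. Since $L \subset \Omega$, this $\gamma$ is a path in $\Omega$ (e.p.e.)\ joining $a$ and $b$, so \cref{thm:lift}(1) applies.

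First I would pick a point $\h{z} \in \h{L}$, set $z = \varphi(\h{z}) \in L$, and let $s_0 \in (0,1)$ be the unique parameter with $\gamma(s_0) = z$. Applying \cref{thm:lift}(1) produces a path $\h{\gamma}$ in $\disk$ (e.p.e.)\ with $\overline{\varphi} \circ \h{\gamma} = \gamma$ and $\h{\gamma}(s_0) = \h{z}$. The restriction $\h{\gamma}{\upharpoonright}_{(0,1)}$ is injective (because $\varphi \circ \h{\gamma}{\upharpoonright}_{(0,1)} = \gamma{\upharpoonright}_{(0,1)}$ is injective), its image is connected and contained in $\varphi^{-1}(L)$, and it contains $\h{z}$; hence $\h{\gamma}((0,1)) \subseteq \h{L}$.

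The key step is to upgrade this to the equality $\h{\gamma}((0,1)) = \h{L}$ and to conclude that $\h{\gamma}{\upharpoonright}_{(0,1)}$ is a homeomorphism onto $\h{L}$, so that $\h{L}$ is an open arc. I would do this using only that $\varphi$ is a local homeomorphism together with the fact that $\gamma{\upharpoonright}_{(0,1)}$ is an embedding: a local chart for $\varphi$ shows that $\h{\gamma}((0,1))$ is open in $\varphi^{-1}(L)$, while continuity of $\gamma^{-1} : L \to (0,1)$ together with continuity of $\h{\gamma}$ shows it is closed in $\varphi^{-1}(L)$. Being clopen, connected, and meeting the component $\h{L}$, it must equal $\h{L}$. (Alternatively one may invoke that $\varphi$ restricts to a covering $\varphi^{-1}(L) \to L$ and that a covering of the simply connected space $L$ is trivial, so every component maps homeomorphically onto $L$.) The same local-homeomorphism argument shows the inverse of $\h{\gamma}{\upharpoonright}_{(0,1)}$ is continuous, giving the open arc claim.

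Finally I would analyze the two endpoints to establish the statement about $\overline{\h{L}}$. Since $\h{\gamma}$ is continuous on the compact set $[0,1]$ and $\h{\gamma}((0,1)) = \h{L}$, we get $\overline{\h{L}} = \h{\gamma}([0,1]) = \h{L} \cup \{\h{\gamma}(0), \h{\gamma}(1)\}$. The endpoints lie in $\partial\disk$: if, say, $\h{\gamma}(0) \in \disk$, then $a = \overline{\varphi}(\h{\gamma}(0)) = \varphi(\h{\gamma}(0)) \in \Omega$, contradicting $a \in \partial\Omega$. They are distinct: if $\h{\gamma}(0) = \h{\gamma}(1)$ then $a = \overline{\varphi}(\h{\gamma}(0)) = \overline{\varphi}(\h{\gamma}(1)) = b$, contradicting $a \neq b$. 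Combined with injectivity of $\h{\gamma}{\upharpoonright}_{(0,1)}$ and the fact that the endpoints lie outside $\h{L} \subset \disk$, this makes $\h{\gamma}$ injective on $[0,1]$, hence a homeomorphism onto $\overline{\h{L}}$, which is therefore an arc with distinct endpoints in $\partial\disk$. I expect the clopen argument for $\h{\gamma}((0,1)) = \h{L}$ to be the main obstacle, since it is the one step not literally handed to us by \cref{thm:lift}(1); everything else is bookkeeping about lifts and the extended map $\overline{\varphi}$.
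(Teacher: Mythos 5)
Your proposal is correct and follows the paper's route exactly: the paper states that \cref{cor:line lift} ``follows immediately from \cref{thm:lift}(1)'', and your argument is precisely that deduction with the details filled in --- lifting an injective parameterization of $\overline{L}$ via \cref{thm:lift}(1), identifying the lift's image with the component $\h{L}$ by the clopen (equivalently, trivial-covering-of-a-simply-connected-base) argument, and reading off distinctness and boundary location of the endpoints from $\overline{\varphi} \circ \h{\gamma} = \gamma$. All steps check out, including the one you flagged as the main obstacle.
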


We next prove a result about the existence of small ``crosscuts'' in $\disk$ straddling any point $\alpha \in \partial \disk$ for which $\overline{\varphi}(\alpha) \in \partial \Omega$ is not isolated in $\partial \Omega$.  For one-to-one analytic maps $\varphi: \disk \to \complex$, this result is standard; see e.g.\ \cite{Pommerenke1992}.  We were unable to find a reference for the case of a bounded analytic covering map $\varphi$, so we include a proof for completeness.

\begin{thm}
\label{thm:crosscut}
Let $\alpha \in \partial \disk$ be such that the radial limit $\lim_{r \to 1^-} \varphi(r\alpha)$ exists, and let $p \in \partial \Omega$ be the limit.
\begin{enumerate}
\item If $p$ is not isolated in $\partial \Omega$, then for any sufficiently small simple closed curve $S$ in $\complex$ containing $p$ in its interior, there is a component $\h{S}$ of $\varphi^{-1}(S)$ whose closure is an arc separating $\alpha$ from the center $0$ of $\disk$ in $\overline{\disk}$.
\item If $p$ is isolated in $\partial \Omega$, then for any sufficiently small simple closed curve $S$ in $\Omega$ containing $p$ in its interior, there is a component $\h{S}$ of $\varphi^{-1}(S)$ whose closure is a circle whose intersection with $\partial \disk$ is $\{\alpha\}$.
\end{enumerate}
Moreover, in both cases, the diameter of $\h{S}$ can be made arbitrarily small by choosing $S$ sufficiently small.
\end{thm}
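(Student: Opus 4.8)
The plan is to locate, in both cases, a single component of the preimage of a small neighborhood of $p$ that contains the tail of the radial segment at $\alpha$, and then to read off the desired crosscut from its boundary. Throughout, write $\rho(r) = \varphi(r\alpha)$ for the radial segment, so that $r \mapsto r\alpha$ is a lift of $\rho$ and $\rho(r) \to p$, $r\alpha \to \alpha$ as $r \to 1^-$. Given a small simple closed curve $S$ bounding a Jordan domain $D_S$ with $p \in D_S$, I would first shrink $S$ so that $\varphi(0) \notin \overline{D_S}$ (possible since $\varphi(0) \in \Omega$ while $p \in \partial\Omega$) and so that $\rho(r) \in D_S$ for all $r$ near $1$. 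Let $\h U$ denote the component of $\varphi^{-1}(D_S \cap \Omega)$ containing the tail $\{r\alpha : r \text{ near } 1\}$. Then $\alpha \in \overline{\h U}$, while $0 \notin \overline{\h U}$, because $\varphi$ carries the closure of $\h U$ in $\disk$ into $\overline{D_S}$ and $\varphi(0) \notin \overline{D_S}$. A short argument shows the relative boundary $\partial_{\disk}\h U$ is contained in $\varphi^{-1}(S)$.

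In case (1), each arc of $S \cap \Omega$ has closure an arc with endpoints in $\partial\Omega$, so by \cref{cor:line lift} every component of $\varphi^{-1}(S \cap \Omega)$ is an open arc in $\disk$ whose closure is a crosscut with two endpoints on $\partial\disk$. (The degenerate configurations in which an arc of $S \cap \Omega$ fails to have distinct endpoints can be excluded by a slight adjustment of $S$, or handled directly by the component argument below.) Since $\h U$ is connected with $\alpha$ in its boundary and $0$ outside its closure, any arc in $\disk$ from the tail to $0$ must meet $\partial_{\disk}\h U \subset \varphi^{-1}(S)$; I would then extract from these boundary crosscuts one crosscut $\h S$ separating $\alpha$ from $0$ — concretely, the crosscut crossed last by the radial segment before its tail enters $\h U$ for good. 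Its closure is the required separating arc.

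In case (2), $p$ is an isolated boundary point, so a punctured neighborhood $P_\epsilon = \{0 < |z - p| < \epsilon\}$ lies in $\Omega$; take $S \subset P_\epsilon$ encircling $p$, and let $\h P$ be the component of $\varphi^{-1}(P_\epsilon)$ containing the radial tail. As the component of a covering-map preimage, $\varphi \colon \h P \to P_\epsilon$ is again a covering map. I would first show that $\overline{\h P} \cap \partial\disk = \{\alpha\}$: this set is a continuum on which $\overline\varphi$ takes the single value $p$, and by \cref{thm:Fatou} and \cref{thm:Riesz} — exactly as in the ``$K$ is a single point'' step of the proof of \cref{thm:lift} — a nondegenerate such continuum is impossible. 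Because $\overline{\h P}$ meets $\partial\disk$ only at $\alpha$, the covering $\varphi \colon \h P \to P_\epsilon$ cannot be finite-sheeted (a finite cover of the annulus $P_\epsilon$ is itself an annulus, whose closure would meet $\partial\disk$ in more than one point), so it is the infinite cyclic universal cover and $\h P$ is simply connected. Consequently $\varphi^{-1}(S) \cap \h P$ is a single simple closed curve whose closure is a circle meeting $\partial\disk$ exactly at $\alpha$, the desired $\h S$.

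Finally, for the diameter claim I would argue by contradiction: if along a sequence of curves $S_n$ shrinking to $p$ the chosen components $\h S_n$ had diameters bounded away from $0$, their closures would subconverge to a nondegenerate continuum in $\overline\disk$ on which $\overline\varphi \equiv p$, meeting $\partial\disk$ in a nondegenerate set — again contradicting \cref{thm:Riesz} via \cref{thm:Fatou} and \cref{thm:Lindelof}. I expect the main obstacle to be precisely this boundary analysis: controlling the closures of the preimage components in $\partial\disk$ despite the discontinuity of $\overline\varphi$ there, and, in case (2), correctly identifying $\varphi \colon \h P \to P_\epsilon$ as the universal rather than a finite cover. Both rest on the Fatou--Riesz--Lindel\"of results, used in the same spirit as the proof of \cref{thm:lift}.
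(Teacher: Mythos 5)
Your overall toolkit is the right one (Fatou--Riesz--Lindel\"{o}f boundary analysis, and your ``moreover'' argument is essentially the paper's), but there are genuine gaps. The most serious is in case (1): the subcase $S \cap \partial\Omega = \emptyset$ is not a removable ``degenerate configuration.'' The theorem quantifies over \emph{every} sufficiently small simple closed curve $S$ around $p$, so you are not free to adjust $S$; and when $p$ is non-isolated the boundary near $p$ can be totally disconnected (e.g.\ a sequence of points converging to $p$), so arbitrarily small $S$ lying entirely in $\Omega$ genuinely occur. In that subcase the unique component of $S \cap \Omega$ is $S$ itself, a closed curve, so \cref{cor:line lift} simply does not apply, and a component of $\varphi^{-1}(S)$ can a priori be a circle in $\disk$ or an open arc whose two ends limit onto the \emph{same} boundary point, in which case its closure is a loop rather than a crosscut. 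The paper devotes a separate argument to exactly this: it traps $\h{S}$ between lifts $\h{S}_0,\h{S}_1$ of two auxiliary circles $\partial B(p,\varepsilon_0)$, $\partial B(p,\varepsilon_1)$ chosen to meet $\partial\Omega$, which forces the two endpoints of $\overline{\h{S}}$ to be distinct. Two further points in your case (1): you never check that the crosscut's endpoints differ from $\alpha$ (needed to separate $\alpha$ itself, not merely the radial tail, in $\overline{\disk}$; the paper gets this from the radial limits $a,b \neq p$ at the endpoints via \cref{thm:lift}); and your selection rule --- ``the crosscut crossed last by the radial segment'' --- is not a proof, since the segment can meet a lift tangentially without crossing, and the parity of earlier meetings can leave $0$ on the same side as the tail. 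The paper instead cites Kuratowski's theorem that a closed subset of $\disk$ separating two points contains a \emph{component} separating them, which is the nontrivial fact your sketch is implicitly using.

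In case (2), the step ruling out a finite-sheeted cover is false as stated: an annulus in $\disk$ can perfectly well have closure meeting $\partial\disk$ in exactly one point (take the region between two internally tangent circles, the outer one being $\partial\disk$). The correct obstruction is different: if the cover were finite, $\h{P}$ would be conformally a punctured disk, the (bounded) inclusion $\h{P} \hookrightarrow \disk$ would extend holomorphically over the puncture, and the extension's value at the puncture lands either in $\disk$ --- forcing $\varphi$ to take the value $p \notin \Omega$ there --- or on $\partial\disk$, violating the maximum principle. Your claim that $\overline{\h{P}} \cap \partial\disk$ ``is a continuum on which $\overline{\varphi}$ takes the single value $p$'' is also unjustified: this set need not be connected, and the value claim requires the crossing-the-radius argument from the proof of \cref{thm:lift}, not just proximity of $\h{P}$. (Also, the preimage of $S$ in the universal cover is an open arc, not a ``simple closed curve''; the circle only appears after showing both ends converge to $\alpha$.) It is worth noting that the paper sidesteps the covering classification entirely: it takes the separating component $\h{S}$ directly (an open arc by covering theory), shows its two end-continua $K_1,K_2 \subset \partial\disk$ are singletons by Fatou plus the radial crossing argument, and then shows both equal $\alpha$ by lifting a homotopy that pushes the radial path at a putative intermediate point $\beta$ off the interior of $S$, contradicting the separation --- an argument that never needs to identify $\h{P}$. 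Your gaps are fixable with the paper's tools, but as written case (1) omits a required subcase and case (2) rests on a false step.
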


\begin{proof}
Let $S$ be small enough so that $\varphi(0)$ is not in the closed topological disk bounded by $S$.  Let $r_0 < 1$ be close enough to $1$ so that $\varphi(r\alpha)$ is in the interior of $S$ for all $r \in [r_0,1)$.  The closed (in $\disk$) set $\varphi^{-1}(S)$ must separate $r_0 \alpha$ from $0$ in $\disk$, since otherwise there would be a path from $0$ to $r_0 \alpha$ which would project to a path in $\Omega$ from $\varphi(0)$ to $\varphi(r_0 \alpha)$ without intersecting $S$, a contradiction.  Therefore, there must be a component $\h{S}$ of $\varphi^{-1}(S)$ which separates $r_0 \alpha$ from $0$ in $\disk$ (see e.g.\ \cite[p.438 \S 57 III Theorem 1]{Kuratowski1968}).

For (1), suppose that $p$ is not isolated in $\partial \Omega$.  Assume first that $S \cap \partial \Omega \neq \emptyset$.  Let $C$ be the component of $S \cap \Omega$ containing $\varphi(\h{S})$.  This $C$ is a path in $\Omega$ (e.p.e.)\ joining two points (not necessarily distinct) $a,b \in \partial \Omega$ with $a \neq p \neq b$.  By \cref{thm:lift} and the fact that $C$ is an open arc in $\Omega$, we have that each component of $\varphi^{-1}(C)$ is an arc in $\disk$ joining points $\h{a},\h{b} \in \partial \disk$ at which the radial limits exist and are equal to $a$ and $b$, respectively; in particular, we have $\h{a} \neq \alpha \neq \h{b}$.  It follows that the closure of $\h{S}$ is an arc with endpoints distinct from $\alpha$, which separates $\alpha$ from $0$ in $\overline{\disk}$, as desired.

Now choose $\varepsilon_1 > 0$ small enough so that $\varphi(0)$ is not in the closed disk $\overline{B}(p,\varepsilon_1)$ and such that $\partial B(p,\varepsilon_1) \cap \partial \Omega \neq \emptyset$.  Assume that $S \subset B(p,\varepsilon_1)$.  If $S \cap \partial \Omega \neq \emptyset$, then we are done by the previous paragraph; hence, suppose that $S \cap \partial \Omega = \emptyset$.  Choose $\varepsilon_0 > 0$ small enough so that $B(p,\varepsilon_0)$ is contained in the interior of $S$, and such that $\partial B(p,\varepsilon_0) \cap \partial \Omega \neq \emptyset$.  By the previous paragraph, there are components $\h{S}_0$ and $\h{S}_1$ of $\varphi^{-1}(\partial B(p,\varepsilon_0))$ and $\varphi^{-1}(\partial B(p,\varepsilon_1))$, respectively, which are arcs separating $\alpha$ from $0$.  As above, there is a component $\h{S}$ of $\varphi^{-1}(S)$ which separates a tail end of the radial segment at $\alpha$ from $0$ in $\disk$, and which separates $\h{S}_0$ from $\h{S}_1$ in $\disk$ (i.e.\ lies between $\h{S}_0$ and $\h{S}_1$).  This implies the endpoints of the closure of $\h{S}$ are distinct, hence the closure of $\h{S}$ is an arc, as desired.

For (2), let $S$ be small enough so that $p$ is the only point of $\partial \Omega$ in the closed topological disk bounded by $S$.  As above, we obtain a component $\h{S}$ of $\varphi^{-1}(S)$ which separates a tail end of the radial segment at $\alpha$ from $0$ in $\disk$.  Since $\varphi$ is a covering map, this $\h{S}$ must be an open arc in $\disk$ whose two ends both accumulate on continua $K_1$ and $K_2$ in $\partial \disk$.

We first argue that $K_1$ and $K_2$ are single points.  Suppose $\beta_1,\beta_2 \in K_1$ with $\beta_1 \neq \beta_2$.  Then by \cref{thm:Fatou} there exists $\beta \in K_1$ between $\beta_1$ and $\beta_2$ where the radial limit $\lim_{r \to 1^-} \varphi(r\beta)$ exists.  However, this radial segment meets $\h{S}$ arbitrarily close to $\beta$, hence $\lim_{r \to 1^-} \varphi(r\beta) \notin \partial \Omega$, a contradiction.  Thus $K_1$ is a single point $\{\h{a}\}$.  Similarly, $K_2$ is a single point $\{\h{b}\}$.

If $\h{a} \neq \alpha$, by \cref{thm:Fatou} and \cref{thm:Riesz} we can find $\beta \in \partial \disk$ between $\h{a}$ and $\alpha$ such that the radial limit $\lim_{r \to 1^-} \varphi(r\beta)$ exists and is different from $p$.  The path $\overline{\varphi}(r\beta)$, $0 \leq r \leq 1$, is homotopic to one which does not enter the interior of $S$.  By \cref{thm:lift}, we can lift this homotopy, to obtain a path from $0$ to $\beta$ in $\disk$ which does not meet $\h{S}$.  But this is a contradiction since $\h{S}$ separates $\beta$ from $0$.  Therefore $\h{a} = \alpha$, and likewise $\h{b} = \alpha$.  Thus the closure of $\h{S}$ is a circle meeting $\partial \disk$ at $\alpha$ only, as desired.

For the moreover part, we argue as in the proof of \cref{thm:lift} that if these components $\h{S}$ did not converge to $0$ in diameter as the diameter of $S$ is shrunk towards $0$, then they would accumulate on a non-degenerate continuum $K \subset \partial \disk$, and we would obtain a contradiction by \cref{thm:Fatou} and \cref{thm:Riesz}.  The details are left to the reader.
\end{proof}

\begin{lem}
\label{lem:large lifts}
Let $S$ be a straight line or round circle in $\complex$, and let $\varepsilon > 0$.  Then there are only finitely many lifts of components of $S \cap \Omega$ with diameter at least $\varepsilon$.
\end{lem}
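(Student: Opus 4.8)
The plan is to argue by contradiction: assume there are infinitely many distinct lifts $\h{C}_1, \h{C}_2, \dots$ (components of $\varphi^{-1}(S) = \varphi^{-1}(S \cap \Omega)$), each of Euclidean diameter at least $\varepsilon$, and derive a contradiction with \cref{thm:Riesz}. Two preliminary observations drive everything. First, since $\varphi$ is a local homeomorphism and $S \cap \Omega$ is a $1$-manifold, $\varphi^{-1}(S)$ is a closed $1$-manifold in $\disk$ whose components (the lifts) are pairwise disjoint and relatively open in it. Second, a purely Euclidean counting fact: because $\Omega$ is bounded, $S \cap \Omega$ is a bounded subset of the line or circle $S$, so for any $\delta_0 > 0$ only finitely many components of $S \cap \Omega$ have diameter at least $\delta_0$ (they are disjoint subarcs, each of length at least its diameter, inside a set of finite total length).

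I would first establish an \emph{escape-to-boundary} property: $\sup_{\h{z} \in \h{C}_n}(1 - |\h{z}|) \to 0$. Indeed, otherwise some $\h{z}_n \in \h{C}_n$ would stay in a compact subset of $\disk$ and subconverge to a point $\h{z}_* \in \varphi^{-1}(S)$ lying in a single lift $\h{C}_m$; since $\h{C}_m$ is relatively open, $\h{z}_n \in \h{C}_m$ eventually, contradicting the distinctness of the lifts. Passing to a subsequence, I then split into two cases according to whether $\mathrm{diam}(\varphi(\h{C}_n)) \to 0$ (Case i) or $\mathrm{diam}(\varphi(\h{C}_n)) \geq \delta_0 > 0$ for all $n$; in the latter case, the counting fact together with the pigeonhole principle lets me assume that all $\h{C}_n$ are lifts of a single fixed component $C^*$ of $S \cap \Omega$ (Case ii).

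The core of the argument is to produce a positive-measure set of radial directions that meet infinitely many of the large lifts. Using the escape property, for large $n$ define the radial projection $P_n = \{\h{z}/|\h{z}| : \h{z} \in \h{C}_n\} \subset \partial\disk$, a subarc (the continuous image of the connected set $\h{C}_n$). Two points of $\h{C}_n$ at distance at least $\varepsilon$ project to points at distance at least $\varepsilon - 2\sup_{\h{C}_n}(1-|\h{z}|) \geq \varepsilon/2$, so $|P_n| \geq \varepsilon/2$; a continuity-from-above estimate on the finite-measure space $\partial\disk$ then gives $|E| \geq \varepsilon/2 > 0$ for $E = \limsup_n P_n$. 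For almost every $\alpha \in E$ the radial limit $\overline{\varphi}(\alpha) \in \partial\Omega$ exists by \cref{thm:Fatou}, and because $\alpha \in P_n$ for infinitely many $n$, the radial segment at $\alpha$ meets $\h{C}_n$ at some $r_n\alpha$ with $r_n \to 1$; hence $\varphi(r_n\alpha) \to \overline{\varphi}(\alpha)$ with $\varphi(r_n\alpha) \in \varphi(\h{C}_n)$. In Case (i) the sets $\varphi(\h{C}_n)$ shrink to a single point $c_\infty \in \partial\Omega$, forcing $\overline{\varphi}(\alpha) = c_\infty$ for a.e.\ $\alpha \in E$; in Case (ii) we have $\varphi(r_n\alpha) \in C^*$, forcing $\overline{\varphi}(\alpha)$ into the finite set $\overline{C^*} \cap \partial\Omega$ (at most the two endpoints of $C^*$). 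Either way a positive-measure subset of $\partial\disk$ has radial limits in a single point (or a finite set), contradicting \cref{thm:Riesz}.

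The hard part is precisely the third step: guaranteeing a \emph{positive-measure} set of directions whose radial segments genuinely cross infinitely many of the diameter-$\geq\varepsilon$ lifts, and simultaneously pinning the corresponding radial limits into a finite (hence null) set so that \cref{thm:Riesz} can be invoked. The radial-projection and $\limsup$ measure estimate handles the former, while the dichotomy on the \emph{image} diameters — reducing Case (ii) to a single component via the Euclidean counting fact — is what makes the images collapse onto a finite set. One should also take a little care with the degenerate shapes of components of $S\cap\Omega$ (a full circle contained in $\Omega$, or a circle minus one point), but in each such case $\overline{C^*}\cap\partial\Omega$ is still empty or a single point, so the same contradiction goes through.
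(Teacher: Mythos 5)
Your proof is correct and follows essentially the same route as the paper's: you rule out accumulation of the large lifts inside $\disk$ via local injectivity of $\varphi$ and connectedness of $S$ within a small round disk (the paper's first case), and in the boundary case you use the same dichotomy --- infinitely many lifts of a single component $C^*$ versus distinct components shrinking to a point --- to pin a positive-measure set of radial limits into a finite set, contradicting \cref{thm:Riesz}. Your radial-projection and $\limsup$ measure estimate is just a quantitative rendering of the paper's step in which the lifts accumulate on a non-degenerate continuum $K \subset \partial \disk$ and \cref{thm:Fatou} is applied to the interior of $K$.
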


\begin{proof}
Suppose the claim is false, so that there exists $\varepsilon > 0$ and infinitely many lifts $\h{S}_n$, $n = 1,2,\ldots$, of components of $S \cap \Omega$ such that the diameter of $\h{S}_n$ is at least $\varepsilon$ for each $n$.  These lifts accumulate on a non-degenerate continuum $K \subset \overline{\disk}$.

If $K \cap \disk \neq \emptyset$, then let $\h{z} \in K \cap \disk$ and let $\h{V}$ be a neighborhood of $\h{z}$ which maps one-to-one under $\varphi$ to a small round disk $V \subset \Omega$.  Then $\h{V}$ meets infinitely many of the lifts $\h{S}_n$.  On the other hand, because $V$ is a round disk contained in $\Omega$ and $S$ is a round circle or straight line, it follows that $V$ can only meet one component of $S \cap \Omega$.  This is a contradiction since $\varphi$ is one-to-one on $\h{V}$.

Suppose then that $K \subset \partial \disk$.  Then there exists by \cref{thm:Fatou} a set $E$ of positive measure in the interior of $K$ so that for each $\alpha \in E$, the radial limit $\lim_{r \to 1^-} \varphi(r\alpha)$ exists.

If there is a single component $S'$ of $S \cap \Omega$ such that $\varphi(\h{S}_n) = S'$ for infinitely many $n$, then it is clear that the radial limit of $\varphi$ at each $\alpha \in E$ must belong to $\overline{S'} \cap \partial \Omega$, which contains at most two points.  But this contradicts \cref{thm:Riesz}.

Therefore we may assume that the components $\varphi(\h{S}_n)$ are all distinct, which means their diameters must converge to $0$.  By passing to a subsequence if necessary, we may assume that the components $\varphi(\h{S}_n)$ converge to a single point $a \in S \cap \partial \Omega$.  Then it is clear that the radial limit of $\varphi$ at each $\alpha \in E$ must equal $a$, again a contradiction by \cref{thm:Riesz}.
\end{proof}

Recall that the function $\overline{\varphi}$ is not necessarily continuous at points $\alpha \in \partial \disk$ where it is defined.  However, the next result shows that the restriction of $\overline{\varphi}$ to the region in between two lifted paths with the same endpoints is continuous.

Given a continuum $X$ in $\complex$, the \emph{topological hull} of $X$, denoted $\hull(X)$, is the smallest simply connected continuum in $\complex$ containing $X$.  Equivalently, $\hull(X)$ is equal to $\complex \smallsetminus U$, where $U$ is the unbounded component of $\complex \smallsetminus X$.

\begin{lem}
\label{lem:hull continuous}
Suppose $\h{\lambda}: [0,1] \to \overline{\disk}$ is a path such that $\overline{\varphi} \circ \h{\lambda}$ is a path in $\overline{\Omega}$ (i.e.\ is continuous).  Let $X$ be the union of $\h{\lambda}([0,1])$ with the two radial segments from the center $0$ of $\disk$ to $\h{p} = \h{\lambda}(0)$ and to $\h{q} = \h{\lambda}(1)$, and let $\Delta = \hull(X)$.  Then $\Delta$ is simply connected and locally connected, and the restriction $\overline{\varphi} {\upharpoonright}_\Delta$ is continuous on $\Delta$.
\end{lem}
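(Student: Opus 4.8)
The plan is to treat the three assertions separately, spending almost all of the effort on continuity. First I would dispose of the structural reductions. Simple connectivity is immediate from the definition of $\hull$. Next, $X$ is a continuum contained in $\overline{\disk}$, so $\Delta = \hull(X) \subseteq \overline{\disk}$, and $\partial \Delta \subseteq X$ (the boundary of a hull lies in the continuum). Since no point of $\partial \disk$ can be interior to $\Delta \subseteq \overline{\disk}$, every point of $\Delta \cap \partial \disk$ lies in $\partial \Delta \subseteq X$; as the two radial segments meet $\partial \disk$ only at $\h{p}$ and $\h{q}$, this gives $\Delta \cap \partial \disk = \h{\lambda}([0,1]) \cap \partial \disk$. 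In particular $\overline{\varphi}$ is defined at every point of $\Delta$ (it is $\varphi$ on $\Delta \cap \disk$, and on $\Delta \cap \partial \disk$ it is a value of the map $\overline{\varphi} \circ \h{\lambda}$, which exists by hypothesis), and $\overline{\varphi}$ is continuous on $X$ itself, being continuous along each radial segment by definition and along $\h{\lambda}$ by hypothesis. Thus the whole problem reduces to continuity of $\overline{\varphi}|_\Delta$ at a point $\alpha \in \Delta \cap \partial \disk$ as it is approached through $\Delta \cap \disk$.

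For local connectivity, I would observe that $\h{\lambda}([0,1])$ is a Peano continuum by the Hahn--Mazurkiewicz theorem and the radial segments are arcs, so $X$, a connected finite union of Peano continua, is a Peano continuum. Then I would invoke the classical fact that the topological hull of a Peano continuum is again a Peano continuum: the boundary $\partial \Delta$ is the boundary of the unbounded complementary domain of $X$, which is locally connected by the Torhorst theorem, and a simply connected continuum with locally connected boundary is locally connected.

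The real work is continuity. Fix $\alpha \in \Delta \cap \partial \disk$, set $p = \overline{\varphi}(\alpha)$, and let $\varepsilon > 0$. Using \cref{thm:crosscut} with the round circle $S = \partial B(p,\rho)$ for small $\rho < \varepsilon$, I obtain a component $\h{S}$ of $\varphi^{-1}(S)$ whose closure separates $\alpha$ from the center $0$ and whose diameter tends to $0$ with $\rho$; let $W$ be the small cut-off region containing $\alpha$, a shrinking neighborhood of $\alpha$ in $\overline{\disk}$. Since $\varphi(r\alpha) \to p$, a tail of the radial segment at $\alpha$ lies in the component $\h{B}$ of $\varphi^{-1}(B(p,\rho))$, on which $\varphi$ takes all its values in $B(p,\rho)$. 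The aim is to show $\Delta \cap W \cap \disk \subseteq \h{B}$, for then $\overline{\varphi}(\Delta \cap W) \subseteq \overline{B(p,\varepsilon)}$ and continuity follows; along any approach to $\alpha$ through $\disk$, \cref{thm:Lindelof} guarantees that the only possible image-limit is $p$, so it suffices to confine the image to a small ball. It is essential here to work inside $\Delta$: the full region $W$ can contain other sheets of $\varphi^{-1}(B(p,\rho))$, so $\varphi(W \cap \disk)$ need not be small, and only the portion of $W$ lying in the hull $\Delta$ is controlled.

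The main obstacle is exactly this confinement: showing that $\Delta$ cannot escape from the good component $\h{B}$ across one of the other boundary crosscuts of $\h{B}$ inside $W$ into a sheet where $\varphi$ ranges far from $p$. Here I would argue by connectivity and separation. We have $0 \in \Delta \setminus W$, and were $\Delta$ to meet $W \setminus \overline{\h{B}}$, a subcontinuum of $\partial \Delta \subseteq \h{\lambda}([0,1])$ would have to cross a bounding crosscut of $\h{B}$, which is a lift of a component of $S \cap \Omega$ and hence carries $\overline{\varphi}$-values lying on $S$, at distance $\rho$ from $p$. Combining this with \cref{lem:large lifts}, which forces all but finitely many such lifts to be small, and with the continuity of $\overline{\varphi} \circ \h{\lambda}$, I expect to reach a contradiction once $\rho$ is small enough. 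Making this separation-and-finiteness argument precise---pinning down exactly how the hull is trapped on the $p$-side of the crosscuts uniformly near $\alpha$, rather than point by point---is where the difficulty lies; everything else is routine.
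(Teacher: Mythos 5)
Your structural reductions are sound and agree with the paper's: simple connectivity is definitional, $\Delta \cap \partial \disk \subseteq \h{\lambda}([0,1])$, $\overline{\varphi}$ is continuous on $X$ itself, and local connectivity of the hull is as you say (the paper simply asserts it as straightforward). But the continuity argument has a genuine gap, and you locate it yourself: the confinement statement $\Delta \cap W \cap \disk \subseteq \h{B}$ is never proved, only ``expected.'' Moreover, as you have set it up, the expected contradiction does not materialize: you shrink the circle radius $\rho$ together with the neighborhood $W$, so the points where $X$ would cross the boundary crosscuts of $\h{B}$ are points near $\alpha$ whose $\overline{\varphi}$-values lie at distance $\rho$ from $p$ --- and since $\rho \to 0$, this is perfectly consistent with continuity of $\overline{\varphi} \circ \h{\lambda}$, so neither \cref{lem:large lifts} nor that continuity can produce a contradiction at this scale. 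To get a contradiction you must keep the circle at a \emph{fixed} radius while only the crosscuts shrink in $\overline{\disk}$; a uniform ``trapping'' statement near $\alpha$ is neither needed nor the right thing to aim for.

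That is exactly how the paper closes the argument, sequentially rather than uniformly. Negate continuity at $\alpha$: there is a fixed $\varepsilon > 0$ and a sequence $\h{w}_n \to \alpha$ in $\Delta \cap \disk$ (off the radial segment to $\alpha$) with $|\varphi(\h{w}_n) - \overline{\varphi}(\alpha)| \geq \varepsilon$. For each $n$ there is a lift $\h{C}_n$ of a component of $\Omega \cap \partial B(\overline{\varphi}(\alpha),\varepsilon)$ separating $\h{w}_n$ from $\alpha$ in $\overline{\disk}$, an arc or circle crosscut by \cref{thm:crosscut}; after passing to a subsequence the $\h{C}_n$ are distinct, hence pairwise disjoint, and \cref{lem:large lifts} forces their diameters to $0$. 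Since $\h{C}_n$ separates two points of the connected set $\Delta = \hull(X)$, it must meet $X$; choosing $\h{z}_n \in X \cap \h{C}_n$ gives $\h{z}_n \to \alpha$ while $\varphi(\h{z}_n)$ stays on the fixed circle $\partial B(\overline{\varphi}(\alpha),\varepsilon)$. The two cases --- $\h{z}_n$ on a radial segment (forcing $\alpha = \h{p}$ or $\h{q}$ and contradicting continuity of $\overline{\varphi}$ along the radius) or $\h{z}_n = \h{\lambda}(s_n)$ with $s_n \to s_\infty$ and $\h{\lambda}(s_\infty) = \alpha$ (contradicting continuity of $\overline{\varphi} \circ \h{\lambda}$) --- then finish the proof. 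Your ingredients (crosscuts, \cref{lem:large lifts}, $\partial \Delta \subseteq X$, continuity of $\overline{\varphi} \circ \h{\lambda}$) are precisely the right ones; what is missing is this fixed-$\varepsilon$, shrinking-crosscut organization, which is the actual content of the lemma rather than a routine detail.
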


\begin{proof}
Note that $\Delta$ is simply connected by definition, and it is straightforward to see that the topological hull of any locally connected continuum is locally connected.  Since $\varphi$ is continuous and $\overline{\varphi} = \varphi$ in $\disk$, it remains to prove that the restriction of $\overline{\varphi}$ to $\Delta$ is continuous at each point of $\Delta \cap \partial \disk$.

Let $\alpha \in \Delta \cap \partial \disk$, and suppose for a contradiction that the restriction of $\overline{\varphi}$ to $\Delta$ is not continuous at $\alpha$.  Then there exists $\varepsilon > 0$ and a sequence of points $\langle \h{w}_n \rangle_{n=1}^\infty$ in $\Delta$ such that $\h{w}_n \to \alpha$, but $|\overline{\varphi}(\alpha) - \varphi(\h{w}_n)| \geq \varepsilon$ for all $n$.

Note that since $\Delta \cap \partial \disk \subset \h{\lambda}([0,1])$ and $\overline{\varphi} \circ \h{\lambda}$ is continuous, we have that the restriction of $\overline{\varphi}$ to $\Delta \cap \partial \disk$ is continuous.  Hence, we may assume that $\h{w}_n \in \Delta \cap \disk$ for each $n$.  Moreover, since the restriction of $\overline{\varphi}$ to the radial segment from $0$ to $\alpha$ is continuous, we may also assume that $\h{w}_n$ does not belong to this segment for all $n$.

For each $n$, there is a lift $\h{C}_n$ of a component of $\Omega \cap \partial B(\overline{\varphi}(\alpha), \varepsilon)$ which separates $\h{w}_n$ from $\alpha$ in $\overline{\disk}$, where $B(\overline{\varphi}(\alpha), \varepsilon)$ is the open disk centered at $\overline{\varphi}(\alpha)$ of radius $\varepsilon$.  By \cref{thm:crosscut}, this lift $\h{C}_n$ is either an arc with endpoints in $\partial \disk$, or, in the case that $\overline{\varphi}(\alpha)$ is the only point of $\partial \Omega$ in the closed disk $\overline{B}(\overline{\varphi}(\alpha), \varepsilon)$, a circle with one point on $\partial \disk$.  By passing to a subsequence if necessary, we may assume that all of the $\h{C}_n$ are distinct, hence they are pairwise disjoint.  According to \cref{lem:large lifts}, the diameters of the lifts $\h{C}_n$ converge to $0$ as $n \to \infty$.  Let $X$ be as in the statement of the lemma, and for each $n$ choose a point $\h{z}_n \in X \cap C_n$.  Then $\h{z}_n \to \alpha$.

Suppose first that there is a subsequence $\h{z}_{n_k}$ of $\h{z}_n$ such that for each $k$, $\h{z}_{n_k}$ belongs to the radial segment from $0$ to $\h{p}$.  It follows that $\alpha = \h{p}$.  Since $\overline{\varphi}$ is continuous on this radial segment, we have $\varphi(\h{z}_{n_k}) \to \overline{\varphi}(\alpha)$ as $k \to \infty$.  But $\varphi(\h{z}_n) \in \partial B(\overline{\varphi}(\alpha), \varepsilon)$ for each $n$, so this is a contradiction.  Likewise, we encounter a contradiction if infinitely many of the points $\h{z}_n$ belong to the radial segment from $0$ to $\h{q}$.

Thus we may assume that all of the points $\h{z}_n$ belong to the set $\h{\lambda}([0,1])$.  Let $s_n \in [0,1]$ such that $\h{z}_n = \h{\lambda}(s_n)$.  By passing to a subsequence if necessary, we may assume that $s_n \to s_\infty$, and $\h{\lambda}(s_\infty) = \alpha$.  Since $\overline{\varphi} \circ \h{\lambda}$ is continuous, it follows that $\overline{\varphi}(\h{z}_n) \to \overline{\varphi}(\alpha)$ as $n \to \infty$.  But again this is a contradiction because $\varphi(\h{z}_n) \in \partial B(\overline{\varphi}(\alpha), \varepsilon)$ for each $n$.
\end{proof}

In the next result, we characterize paths in $\overline{[\gamma]}$ in terms of lifts.

\begin{thm}
\label{thm:class closure lift}
Let $\gamma$ be a path in $\Omega$ (e.p.e.)\ joining $p$ and $q$, and let $\h{\gamma}$ be a lift of $\gamma$ with endpoints $\h{p},\h{q} \in \overline{\disk}$.  If $\lambda \in \overline{[\gamma]}$, then there exists a lift $\h{\lambda}$ of $\lambda$ (to $\overline{\disk}$) with the same endpoints $\h{p},\h{q}$.  Conversely, if $\lambda: [0,1] \to \overline{\Omega}$ is a path joining $p$ and $q$ which has a lift $\h{\lambda}: [0,1] \to \overline{\disk}$ with the same endpoints $\h{p},\h{q}$, then $\lambda \in \overline{[\gamma]}$.
\end{thm}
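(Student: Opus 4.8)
The plan is to prove the two implications separately, using \cref{thm:lift}(2) to lift a homotopy for the forward direction and \cref{lem:hull continuous} to project a homotopy back down for the converse. Throughout I keep the standing assumption of this section that $\Omega$ is bounded.

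For the converse, suppose $\lambda$ has a lift $\h\lambda$ with the same endpoints $\h p,\h q$ as the given lift $\h\gamma$. First I would record that $\h\gamma$ has interior in $\disk$ (since $\gamma$ is a path in $\Omega$ e.p.e.), while $\h\lambda$ may meet $\partial\disk$ at interior parameters. Let $R_p,R_q$ be the radial segments from the center $0$ of $\disk$ to $\h p$ and to $\h q$, and set $\Delta=\hull(\h\gamma([0,1])\cup\h\lambda([0,1])\cup R_p\cup R_q)$. I would first extend \cref{lem:hull continuous} from a single path to this pair of paths; the argument is identical, since $\Delta\cap\partial\disk\subset\h\gamma([0,1])\cup\h\lambda([0,1])$ and $\overline\varphi$ is continuous there (being $\gamma$ on $\h\gamma$ and $\lambda$ on $\h\lambda$), after which the crosscut estimates of \cref{thm:crosscut} and \cref{lem:large lifts} give continuity of $\overline\varphi$ on all of $\Delta$, while $\Delta$ is simply connected and locally connected. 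The task is then to build a homotopy $\h H:[0,1]\times[0,1]\to\Delta$ from $\h\gamma$ to $\h\lambda$, fixing the endpoints $\h p,\h q$, whose image lies in $\Delta$ (so that $\overline\varphi\circ\h H$ is continuous) and whose intermediate paths $\h H_t$, for $t\in[0,1)$, have interiors contained in $\disk$ (so that $H_t=\overline\varphi\circ\h H_t$ is a path in $\Omega$ e.p.e.). Granting such an $\h H$, the projection $H=\overline\varphi\circ\h H$ is continuous by the extended lemma, satisfies $H_0=\gamma$ and $H_1=\lambda$, and exhibits $\lambda\in\overline{[\gamma]}$.

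For the forward direction, suppose $\lambda\in\overline{[\gamma]}$, realized by a homotopy $h$ with $h_0=\gamma$, $h_1=\lambda$, and $h_t$ a path in $\Omega$ e.p.e. for $t\in[0,1)$. For each $t_0<1$ the restriction $h{\upharpoonright}_{[0,1]\times[0,t_0]}$ satisfies the hypotheses of \cref{thm:lift}(2), so it lifts uniquely; by uniqueness these lifts agree and assemble into a single homotopy $\h h:[0,1]\times[0,1)\to\overline\disk$ with $\varphi\circ\h h=h$ on the interior and with the two endpoints pinned at the constants $\h p=\h\gamma(0)$ and $\h q=\h\gamma(1)$ for all $t$. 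I would then set $\h\lambda(s)=\lim_{t\to1^-}\h h(s,t)$ and show this limit exists for every $s$: when $\lambda(s)\in\Omega$ the tail of $t\mapsto\h h(s,t)$ lies in a single evenly covered sheet over a small disk about $\lambda(s)$, forcing convergence; when $\lambda(s)\in\partial\Omega$ one has $|\h h(s,t)|\to1$, and the shrinking crosscuts produced by \cref{thm:crosscut} (with diameters tending to $0$ by the moreover part and \cref{lem:large lifts}) trap the tail into nested neighborhoods of a single point $\alpha\in\partial\disk$. Finally \cref{thm:Lindelof} gives $\overline\varphi(\h\lambda(s))=\lim_{t\to1^-}h(s,t)=\lambda(s)$, so that $\h\lambda$ is a lift of $\lambda$ with endpoints $\h p,\h q$, once its continuity is established.

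I expect the main obstacles to be the two convergence/regularity points. In the converse, the difficulty is constructing the homotopy $\h H$ that simultaneously stays inside the hull $\Delta$ (needed for continuity of the projection, since $\overline\varphi$ is genuinely discontinuous on $\partial\disk$ in general) and keeps its intermediate interiors off $\partial\disk$; the naive straight-line homotopy in the convex set $\overline\disk$ satisfies the second requirement immediately, because for $s\in(0,1)$ and $t\in[0,1)$ one has $|(1-t)\h\gamma(s)+t\h\lambda(s)|<1$, but it can leave $\Delta$ when the two paths are misaligned, so one must instead exploit the Peano-continuum structure of $\Delta$ (simple connectivity together with local connectivity) to route the homotopy through the interior. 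In the forward direction, the delicate step is ruling out that the lifted paths oscillate toward a nondegenerate arc of $\partial\disk$ as $t\to1^-$, and then promoting the pointwise limit $\h\lambda$ to a continuous path; both are controlled by the boundary theorems of Fatou, Riesz, and Lindel\"of together with the crosscut machinery, exactly as in the proofs of \cref{thm:crosscut} and \cref{lem:hull continuous}.
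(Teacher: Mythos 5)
Your proposal is correct and is essentially the paper's own argument: the forward direction is the lifted-homotopy construction via \cref{thm:lift} together with the Fatou--Riesz accumulation argument for continuity of $\h{\lambda}$ (the paper obtains your pointwise limits for free by applying \cref{thm:lift}(1) to each vertical path $t \mapsto h(s,t)$ with initial point $\h{\gamma}(s)$, which sidesteps your crosscut-trapping step --- note \cref{thm:crosscut} presupposes a point $\alpha \in \partial\disk$ at which the radial limit already exists, which you do not yet have at that stage, so the cleaner route is either \cref{thm:lift}(1) or the nondegenerate-accumulation-continuum argument from the proof of \cref{thm:lift}), and the converse is exactly the hull-plus-\cref{lem:hull continuous} projection. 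The only organizational differences are that the paper avoids your two-path extension of \cref{lem:hull continuous} by interposing the auxiliary path $\h{c}$ through the center $0$ (the two radial segments, matching the lemma's hypotheses verbatim), proving $\lambda \in \overline{[c]}$ and $\gamma \in [c]$ separately and concluding by transitivity; and the in-hull homotopy whose construction you flag as the main difficulty is likewise only asserted in the paper, from simple connectivity and local connectivity of $\Delta$, so your treatment matches the paper's level of rigor there.
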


\begin{proof}
Let $\lambda \in \overline{[\gamma]}$, and let $h$ be a homotopy such that $h_0 = \gamma$, $h_1 = \lambda$, and $h_t$ is a path in $\Omega$ (e.p.e.)\ joining $p$ and $q$ for each $t \in [0,1)$.  For each $s \in [0,1]$, consider the path $t \mapsto h_t(s)$.  Apply \cref{thm:lift} to obtain a lift $\h{h}_t(s)$ such that $\h{h}_0(s) = \h{\gamma}(s)$.  Define $\h{\lambda}: [0,1] \to \overline{\disk}$ by $\h{\lambda}(s) = \h{h}_1(s)$.  By \cref{thm:Lindelof}, we have $\overline{\varphi} \circ \h{\lambda}(s) = \lambda(s)$ for all $s \in [0,1]$, and $\h{\lambda}(0) = \h{p}$ and $\h{\lambda}(1) = \h{q}$.

It remains to prove that $\h{\lambda}$ is continuous.  Let $s \in [0,1]$.  If $\h{\lambda}(s) \in \disk$, then $\h{\lambda}$ is continuous at $s$ by standard covering space theory.  Suppose for a contradiction that $\h{\lambda}(s) \in \partial \disk$ and $\h{\lambda}$ is not continuous at $s$.  We proceed with an argument similar to the one given for \cref{thm:lift}.  The sets $\h{h}([s-\frac{1}{n},s+\frac{1}{n}] \times [1-\frac{1}{n},1)$, $n=1,2,\ldots$, accumulate on a non-degenerate continuum $K \subset \partial \disk$, which contains, by \cref{thm:Fatou}, a set $E$ of positive measure such that the radial limit $\lim_{r \to 1^-} \varphi(r\alpha)$ exists for each $\alpha \in E$.  We can choose, for each $\alpha \in E$, a sequence $(s_n,t_n)$ converging to $(s,1)$ such that $\h{h}(s_n,t_n) = r_n \alpha$, with $r_n \to 1$.  It follows that the radial limit $\lim_{r \to 1^-} \varphi(r\alpha)$ is equal to
\[ \lim_{n \to \infty} \varphi(r_n \alpha) = \lim_{n \to \infty} \varphi \circ \h{h}(s_n,t_n) = \lim_{n \to \infty} h(s_n,t_n) = \lambda(s) ,\]
a contradiction with \cref{thm:Riesz}.  Therefore, $\h{\lambda}$ is a continuous lift of $\lambda$.

Conversely, suppose $\lambda: [0,1] \to \overline{\Omega}$ is a path joining $p$ and $q$ which has a lift $\h{\lambda}: [0,1] \to \overline{\disk}$ with the same endpoints $\h{p},\h{q}$ as $\h{\gamma}$.  Let $\h{c}$ be the path in $\disk$ (e.p.e.)\ such that $\h{c}(0) = \h{p}$, $\h{c}(\frac{1}{2}) = 0$, $\h{c}(1) = \h{q}$, and $\h{c}$ linearly parameterizes the straight segments in between these points.  Let $c = \overline{\varphi} \circ \h{c}$.

Let $X = \h{\lambda}([0,1]) \cup \h{c}([0,1])$, and let $\Delta = \hull(X)$.  Since $\Delta$ is simply connected, it follows that there is a homotopy $\h{h}$ between $\h{c}$ and $\h{\lambda}$ within $\Delta$, such that $\h{h}_0 = \h{c}$, $\h{h}_1 = \h{\lambda}$, and $\h{h}_t$ is a path in $\disk$ (e.p.e.)\ joining $\h{p}$ and $\h{q}$ for all $t \in [0,1)$.  Since $\overline{\varphi}$ is continuous on $\Delta$ by \cref{lem:hull continuous}, the composition $\overline{\varphi} \circ \h{h}$ is a homotopy between $c$ and $\lambda$ which establishes that $\lambda \in \overline{[c]}$.  By the same reasoning, we can show that $\gamma \in [c]$.  Therefore, $\lambda \in \overline{[\gamma]}$.
\end{proof}

We now have the machinery in place to conclude that the class $\overline{[\lambda^\gamma_{[s_1,s_2]}]}$ described in the definition of a locally shortest path is well-defined.

\begin{cor}
\label{cor:locally shortest well-defined}
Let $\gamma$ be a path in $\Omega$ (e.p.e.)\ joining $p$ and $q$, and let $\h{\gamma}$ be a lift of $\gamma$ with endpoints $\h{p},\h{q} \in \overline{\disk}$.  Let $\lambda \in \overline{[\gamma]}$, and let $\h{\lambda}$ be a lift of $\lambda$ (to $\overline{\disk}$) with the same endpoints $\h{p},\h{q}$.  Let $0 < s_1 < s_2 < 1$.  A path $\rho$ belongs to $\overline{[\lambda^\gamma_{[s_1,s_2]}]}$ if and only if there is a lift $\h{\rho}$ of $\rho$ (to $\overline{\disk}$) with endpoints $\h{\lambda}(s_1),\h{\lambda}(s_2)$.

In particular, the definition of the class $\overline{[\lambda^\gamma_{[s_1,s_2]}]}$ is independent of the choice of homotopy $h$ between $\gamma$ and $\lambda$.
\end{cor}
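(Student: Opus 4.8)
The plan is to deduce the biconditional directly from \cref{thm:class closure lift} by exhibiting one convenient lift of the reference path $\lambda^\gamma_{[s_1,s_2]}$. Fix the homotopy $h$ used to define $\lambda^\gamma_{[s_1,s_2]}$, and lift it exactly as in the proof of \cref{thm:class closure lift}: for each $s$ lift the path $t \mapsto h(s,t)$ starting at $\h\gamma(s)$, obtaining $\h h(s,t)$, and set $\h\lambda = \h h_1$. This $\h\lambda$ is a lift of $\lambda$ with endpoints $\h p,\h q$, and I take it to be the lift named in the statement. Writing $t(s) = 1-(s-s_1)(s-s_2)$, define $\h\sigma(s) = \h h\bigl(s,t(s)\bigr)$ for $s \in [s_1,s_2]$, so that $\overline{\varphi}\circ\h\sigma = \lambda^\gamma_{[s_1,s_2]}$; since the defining curve meets the level $t=1$ exactly at $s=s_1,s_2$, the endpoints are $\h\sigma(s_i) = \h h(s_i,1) = \h\lambda(s_i)$. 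Granting that $\h\sigma$ is a continuous lift of $\lambda^\gamma_{[s_1,s_2]}$ with these endpoints, \cref{thm:class closure lift}---applied with $\lambda(s_1),\lambda(s_2)$ in place of $p,q$ and $\h\sigma$ in place of $\h\gamma$---gives both directions at once: $\rho \in \overline{[\lambda^\gamma_{[s_1,s_2]}]}$ precisely when $\rho$ admits a lift with endpoints $\h\lambda(s_1),\h\lambda(s_2)$.

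The one point requiring care---and the main obstacle---is verifying that $\h\sigma$ is continuous with the asserted endpoints, i.e.\ that $\lim_{s\to s_1^+}\h h(s,t(s)) = \h\lambda(s_1)$, and symmetrically at $s_2$. On $(s_1,s_2)$ the path $\lambda^\gamma_{[s_1,s_2]}$ lies in $\Omega$, so there $\h\sigma$ coincides with the unique interior lift furnished by \cref{thm:lift}(1) and is continuous; the difficulty is only the endpoint behaviour when $\lambda(s_1)\in\partial\Omega$, where the limit lies on $\partial\disk$ and $\overline{\varphi}$ is not locally injective. I would resolve this with the crosscut machinery of \cref{thm:crosscut}, in the spirit of \cref{lem:hull continuous} and the continuity argument inside \cref{thm:class closure lift}: for a small circle $S$ about $\lambda(s_1)$, \cref{thm:crosscut} yields a lift $\h S$ near $\h\lambda(s_1)$ separating $\h\lambda(s_1)$ from $0$, with diameter shrinking to $0$ as $S$ shrinks. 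For $s$ near $s_1$ the entire vertical arc $t\mapsto \h h(s,t)$, $t\in[t(s),1]$, projects into the interior of $S$ (by uniform continuity of $h$ together with $t(s)\to 1$), and so cannot cross $\h S$; hence both $\h\sigma(s)=\h h(s,t(s))$ and $\h\lambda(s)=\h h(s,1)$ lie in the same shrinking neighbourhood cut off by $\h S$. Since $\h\lambda(s)\to\h\lambda(s_1)$ by continuity of $\h\lambda$, this forces $\h\sigma(s)\to\h\lambda(s_1)$, as needed.

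Finally, for the \emph{in particular} I must show the class is independent of the homotopy $h$. Let $h,h'$ be two admissible homotopies from $\gamma$ to $\lambda$, yielding lifts $\h h_1,\h h'_1$ (each a lift of $\lambda$ with endpoints $\h p,\h q$). I would concatenate them into a single homotopy $H$ from $\gamma$ to $\gamma$---running $h$ forward on $[0,\tfrac12]$ and $h'$ backward on $[\tfrac12,1]$, so $H_{1/2}=\lambda$ while every other time-slice is a path in $\Omega$ (e.p.e.)---and lift it from $\h\gamma$ (each half lifts as in \cref{thm:class closure lift}, joined continuously at $t=\tfrac12$ by the argument of the previous paragraph). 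Then $\h H_1$ is a lift of $\gamma$; because $\gamma$ is itself a path in $\Omega$ (e.p.e.), its interior lies in a single component of $\Omega$, so any two of its lifts differ by a single deck transformation, giving $\h H_1 = T\h\gamma$. Tracing the lift through the concatenation---the second half is $h'$ reversed, lifted from $\h H_{1/2}=\h h_1$---then yields $\h h_1 = T\,\h h'_1$ globally, for this one deck transformation $T$. Since deck transformations permute lifts and commute with $\overline{\varphi}$, applying $T$ shows that $\rho$ has a lift with endpoints $\h h_1(s_1),\h h_1(s_2)$ if and only if it has one with endpoints $\h h'_1(s_1),\h h'_1(s_2)$; hence $h$ and $h'$ determine the same class $\overline{[\lambda^\gamma_{[s_1,s_2]}]}$. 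The same deck-invariance shows the characterization is unaffected by replacing $\h\lambda$ with any deck-translate, so it depends on neither $h$ nor the chosen lift.
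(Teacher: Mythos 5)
Your treatment of the biconditional is correct and is essentially the paper's intended argument: the corollary is presented as an immediate consequence of \cref{thm:class closure lift}, obtained exactly as you do by lifting the defining homotopy $h$ as in that theorem's proof, observing that $\h{\sigma}(s) = \h{h}(s,t(s))$ is a lift of $\lambda^\gamma_{[s_1,s_2]}$ with endpoints $\h{\lambda}(s_1),\h{\lambda}(s_2)$, and then applying \cref{thm:class closure lift} with $\lambda(s_1),\lambda(s_2)$ in place of $p,q$. Your crosscut verification of continuity of $\h{\sigma}$ at $s_1,s_2$ is legitimate and in the spirit of the paper's machinery, though note two things: the Riesz-measure argument already used in the proof of \cref{thm:class closure lift} in fact establishes \emph{joint} continuity of $\h{h}$ at the points $(s_i,1)$ (the sets $\h{h}([s_i-\tfrac1n,s_i+\tfrac1n]\times[1-\tfrac1n,1))$ must shrink to the point $\h{\lambda}(s_i)$), which gives the endpoint limits with no extra work; and in the case where $\lambda(s_1)$ is isolated in $\partial\Omega$, \cref{thm:crosscut}(2) gives a \emph{circle} $\h{S}$ passing through $\h{\lambda}(s_1)$ rather than an arc missing it, so ``the same shrinking neighbourhood cut off by $\h{S}$'' needs repair there (e.g.\ via \cref{lem:large lifts}, since nearby components of the preimage of the interior of $S$ are bounded by small lifts of $S$).

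The genuine gap is in your ``in particular'' argument, and it sits at exactly the crux of the statement. To lift the concatenated homotopy $H$ you must produce a continuous lift across the interior slice $t=\tfrac12$, where $H_{1/2}=\lambda$ may meet $\partial\Omega$ at interior parameters. The paper's lifting results (\cref{thm:lift}(2), \cref{thm:class closure lift}) lift homotopies by lifting vertical paths starting from points of $\disk$; you cannot lift $h'$ \emph{reversed}, since its vertical paths start at points $\lambda(s)$ possibly in $\partial\Omega$. The only available move is to lift $h'$ forward from $\h{\gamma}$, obtaining $\h{h}'$, and then gluing the reversed lift to $\h{h}$ at $t=\tfrac12$ requires precisely that $\h{h}_1 = \h{h}'_1$ (or $T\h{h}'_1$ for some deck transformation $T$) --- which is the statement being proved; the continuity argument you cite shows each half-lift is continuous up to its own boundary-touching slice, not that the two independently constructed half-lifts agree there. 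Moreover, the global relation $\h{h}_1 = T\,\h{h}'_1$ is not forced even granting a lift of $H$: when $\lambda$ meets $\partial\Omega$ at interior times, its lifts need not be coherent. On each maximal subinterval where $\lambda$ lies in $\Omega$ a lift is determined up to a deck element, but the deck elements may vary from one subinterval to the next, and continuity at a touch point can impose no constraint --- for instance, if the touched boundary point is isolated in $\partial\Omega$, every lift of a path landing at it lands at the same (parabolic) point of $\partial\disk$, so segment-wise shifted lifts are still continuous. Hence your appeal to ``any two lifts of $\gamma$ differ by a single deck transformation'' (true for $\gamma$, since $\gamma((0,1))\subset\Omega$) does not transfer to $\lambda$, and the tracing step $\h{h}_1 = T\,\h{h}'_1$ fails in general. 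What your argument actually needs --- and what remains unproved --- is that the induced lift $\h{h}_1$, or at least the endpoint pair $(\h{h}_1(s_1),\h{h}_1(s_2))$ up to a single deck element, is independent of the choice of $h$.
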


\section{Proof of \cref{thm:main1}}
\label{sec:proof main1}

Let $\Omega \subset \complex$ be a connected open set, $p,q \in \overline{\Omega}$, and let $\gamma$ be a path in $\Omega$ (e.p.e.)\ joining $p$ and $q$.  We may assume that either $p \neq q$, or $\gamma$ is a non-trivial loop, so that $\overline{[\gamma]}$ contains no constant path, since otherwise this is obviously the unique efficient path.

Let $D_\Omega$ be a large round disk in $\complex$ which contains the entire path $\gamma([0,1])$.  We may assume, without loss of generality, that $\Omega$ is contained in $D_\Omega$, hence in particular is a bounded subset of $\complex$.  Indeed, any efficient path in $\overline{[\gamma]}$ with respect to $\Omega$ also belongs to $\overline{[\gamma]}$ with respect to $\Omega \cap D_\Omega$, and vice versa.  The same goes for the other notions of shortest path used in this paper.  Hence, we assume $\Omega \subset D_\Omega$ for the remainder of this paper.

Let $\varphi: \disk \to \Omega$ be an analytic covering map, and let $\overline{\varphi}$ denote the extension of $\varphi$ to those points in $\partial \disk$ where the radial limit is defined, as in \cref{sec:covering maps}.  Choose any lift $\h{\gamma}: [0,1] \to \overline{\disk}$ of $\gamma$ under $\overline{\varphi}$ (see \cref{thm:lift}(1)).  So $\h{\gamma}$ is a path in $\disk$ (e.p.e.)\ and $\overline{\varphi} \circ \h{\gamma} = \gamma$.  Let $\h{p} = \h{\gamma}(0) \in \overline{\disk}$ and $\h{q} = \h{\gamma}(1) \in \overline{\disk}$.  Since $\overline{[\gamma]}$ does not contain a constant path, we have $\h{p} \neq \h{q}$ (cf.\ \cref{thm:class closure lift}).

In \cref{sec:sequence of approximations} and \cref{sec:convergence} below, we will establish the existence of an efficient path in $\overline{[\gamma]}$ via a recursive construction in which we repeatedly replace subpaths by straight line segments, when doing so does not change the homotopy class.  We begin with some preliminary results in \cref{sec:lifts of lines}.

\subsection{Lifts of lines}
\label{sec:lifts of lines}

Throughout this paper, when we use the word \emph{line} we mean straight line in $\complex$.  In this subsection, we consider lines $L$ in $\complex$ which intersect $\Omega$, and lifts of closures of components of $L \cap \Omega$ under $\overline{\varphi}$.  By abuse of terminology, any such lift will be called a \emph{lift of $L$}.  For a given line $L$, $L \cap \Omega$ has at most countably many components, and each of these components has at most countably many lifts, each of which is, by \cref{cor:line lift}, an arc in $\overline{\disk}$ whose (distinct) endpoints are in $\partial \disk$, and which is otherwise contained in $\disk$.  Observe that if $\h{L}_1$ and $\h{L}_2$ are distinct lifts of lines, then $\h{L}_1 \cap \h{L}_2$ contains at most one point; moreover, if $\h{L}_1 \cap \h{L}_2 = \{\h{z}\}$ for some $\h{z} \in \disk$, then $\h{L}_1$ and $\h{L}_2$ cross transversally at $\h{z}$.

Our construction later in this section of an efficient path is based on the following reformulation of the definition of an efficient path.

\begin{prop}
\label{prop:efficient characterization}
Let $\lambda \in \overline{[\gamma]}$, and let $\h{\lambda}$ be a lift of $\lambda$ (under $\overline{\varphi}$) joining $\h{p}$ to $\h{q}$.  Then $\lambda$ is an efficient path in $\overline{[\gamma]}$ if and only if for any lift $\h{L}$ of a line intersecting $\Omega$, the set $\h{\lambda}^{-1}(\h{L})$ is connected (possibly empty).
\end{prop}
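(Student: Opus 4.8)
The plan is to prove both implications by contraposition, translating membership in $\overline{[\gamma]}$ into the existence of a lift with endpoints $\h{p},\h{q}$ via \cref{thm:class closure lift}, and using the uniqueness clause of \cref{thm:lift} to force the lifts of two paths that agree off an interval to agree off that interval. Throughout I will use that each lift $\h{L}$ of a line is an arc carried homeomorphically by $\overline{\varphi}$ onto a closed straight segment $\overline{\varphi}(\h{L}) \subset L$ (\cref{cor:line lift}), and that two distinct lifts of lines meet in at most one point, where they cross transversally.

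For the implication that efficiency forces connected preimages, I would assume $\h{\lambda}^{-1}(\h{L})$ is disconnected for some lift $\h{L}$ and build a competitor. Disconnectedness yields $a < c < b$ with $\h{\lambda}(a),\h{\lambda}(b) \in \h{L}$ but $\h{\lambda}(c) \notin \h{L}$. I would replace $\h{\lambda}{\upharpoonright}_{[a,b]}$ by the subarc of $\h{L}$ from $\h{\lambda}(a)$ to $\h{\lambda}(b)$ (or by the constant point if these coincide). Since $\overline{\varphi}$ maps this subarc homeomorphically onto the straight segment $\overline{\lambda(a)\lambda(b)}$, the new lift projects to the path $\lambda'$ obtained from $\lambda$ by straightening on $[a,b]$; its endpoints are still $\h{p},\h{q}$, so $\lambda' \in \overline{[\gamma]}$ by \cref{thm:class closure lift}. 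Because $\h{\lambda}$ leaves $\h{L}$ on $(a,b)$, the uniqueness of lifts shows $\lambda{\upharpoonright}_{[a,b]}$ is not this segment up to reparameterization, so $\lambda \neq \lambda'$ and $\lambda$ is not efficient.

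For the converse I would assume every $\h{\lambda}^{-1}(\h{L})$ is connected and show that any straightening competitor $\lambda' \in \overline{[\gamma]}$ on an interval $[s_1,s_2]$ must coincide with $\lambda$. Lifting $\lambda'$ with endpoints $\h{p},\h{q}$ and applying lift-uniqueness on $[0,s_1]$ and $[s_2,1]$ shows the two lifts agree there and share $\h{w}_1 = \h{\lambda}(s_1)$ and $\h{w}_2 = \h{\lambda}(s_2)$; thus it suffices to compare $\h{\mu} = \h{\lambda}{\upharpoonright}_{[s_1,s_2]}$ with the lift $\h{\mu}'$ of the segment $\sigma = \overline{\lambda(s_1)\lambda(s_2)}$. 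When $\sigma$ lies in $\Omega$ (endpoints aside) it sits in one component of $L \cap \Omega$, so $\h{\mu}'$ lies on a single lift $\h{L}$ whose closure contains $\h{w}_1$ and $\h{w}_2$; connectedness of $\h{\lambda}^{-1}(\h{L})$ then forces $\h{\mu} \subset \overline{\h{L}}$. It remains to exclude backtracking: if $\lambda{\upharpoonright}_{[s_1,s_2]}$ were non-monotone along $\sigma$, some interior point $y \in \Omega$ of $\sigma$ would be visited twice, and choosing a short line $L'$ through $y$ transverse to $L$ would make $\h{\lambda}^{-1}(\h{L}')$ disconnected — the two visits to the single crossing point $\h{y} = \h{L}\cap\h{L}'$ being separated by an excursion of $\h{\mu}$ along $\h{L}$ off $\h{L}'$ — contradicting the hypothesis. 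Hence $\h{\mu} = \h{\mu}'$ and $\lambda = \lambda'$.

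The hard part will be the general case of the converse, in which $\sigma$ meets $\partial\Omega$ at interior points, so that $\h{\mu}'$ is not confined to one lift but threads through a succession of lifts $\h{L}_i$ (one per component $C_i$ of $\sigma \cap \Omega$), passing through points of $\partial\disk$ over the boundary touches in between. I would process the $C_i$ in order, using connectedness of $\h{\lambda}^{-1}(\h{L}_i)$ to pin $\h{\mu}$ onto each $\overline{\h{L}_i}$ as above; the delicate point is to prevent $\h{\mu}$ from peeling off some $\h{L}_i$ at an interior point and reaching $\h{w}_2$ by a different route while still meeting every lift of every line connectedly. I expect to control this using short lines transverse to $L$ near the touch points: by \cref{cor:line lift} their lifts are crosscuts of $\disk$, and, arguing as in \cref{thm:crosscut} and \cref{lem:large lifts}, those straddling a touch point $\h{x}_0 \in \partial\disk$ should force $\h{\lambda}$ to meet one such transverse lift in two separated arcs whenever $\h{\mu}$ deviates from $\h{\mu}'$ there. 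Making this boundary analysis airtight — ruling out \emph{every} deviation of $\h{\mu}$ from the straight lift $\h{\mu}'$ by exhibiting a single disconnected line-preimage — is the crux of the argument.
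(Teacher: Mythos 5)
The paper states this proposition without proof, treating it as a reformulation that follows from the machinery of \cref{sec:covering maps}, so the comparison here is against what a complete argument would require. Your forward direction (disconnected preimage $\Rightarrow$ not efficient) has the right skeleton: straighten $\h{\lambda}$ along the subarc of $\h{L}$, project, and invoke \cref{thm:class closure lift} to get $\lambda' \in \overline{[\gamma]}$. But your closing step, and more seriously the reduction step of your converse, rest on ``uniqueness of lifts'' for paths in $\overline{[\gamma]}$, and this tool is not available: \cref{thm:lift} gives unique lifting only for paths in $\Omega$ (e.p.e.), whereas paths in $\overline{[\gamma]}$ may meet $\partial \Omega$ at interior parameter values, where $\overline{\varphi}$ is not even continuous. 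Unique lifting genuinely fails there. For example, with $\Omega = \disk \smallsetminus \{0\}$ and $\varphi$ the universal covering (half-plane picture $w \mapsto e^{iw}$), a path running radially into the puncture and back out lifts to a vertical ray upward followed by \emph{any} of the $2\pi k$-translated rays downward; all of these are continuous through the single point of $\partial\disk$ lying over $0$, and they have distinct terminal points. So your claim that lifting $\lambda'$ with endpoints $\h{p},\h{q}$ and ``applying lift-uniqueness on $[0,s_1]$ and $[s_2,1]$'' forces the lifts to agree there is unjustified: the lift supplied by \cref{thm:class closure lift} need not pass through $\h{\lambda}(s_1)$ and $\h{\lambda}(s_2)$ at all, and pinning it down requires a separate argument (e.g., working with the lifted homotopy as in the proofs of \cref{thm:class closure lift} and \cref{cor:locally shortest well-defined}, with Riesz-type measure arguments controlling boundary behavior). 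A related untreated case infects the forward direction: if $\lambda{\upharpoonright}_{[a,b]}$ already parameterizes the segment downstairs while $\h{\lambda}{\upharpoonright}_{[a,b]}$ runs along a \emph{different} lift $\h{L}''$ of the same component, you get disconnected preimage without any visible efficiency violation; excluding this needs the observation that distinct lifts of the same component are disjoint in $\disk$, plus a \cref{thm:Riesz}-style argument ruling out two lifts sharing endpoints on $\partial\disk$ — none of which appears in your sketch.

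Second, you yourself flag that the decisive case of the converse — the straightening segment $\sigma$ meeting $\partial\Omega$ at interior points, so that $\h{\mu}'$ threads through several lifts $\h{L}_i$ joined at points of $\partial\disk$ — is only a plan (``I expect to control this\dots is the crux''). This is not a peripheral edge case: paths in $\overline{[\gamma]}$ and their straightenings routinely touch $\partial\Omega$, and for arbitrary (non-locally-connected, non-isolated) boundaries the behavior of lifts at those touch points is exactly where \cref{thm:crosscut}, \cref{lem:large lifts}, and \cref{lem:hull continuous} earn their keep. As submitted, the proposal establishes neither implication in full: the forward direction modulo the boundary-sharing configuration above, and the converse only in the special case $\sigma \subset \Omega$ (e.p.e.). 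The overall strategy — translating membership in $\overline{[\gamma]}$ into lifts with endpoints $\h{p},\h{q}$ and comparing lifts combinatorially against lifts of lines — is the right one and consistent with how the paper uses the proposition, but the two gaps named above must be closed before this counts as a proof.
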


We next make a detailed study of lifts of lines, focusing on whether they separate $\h{p}$ from $\h{q}$ in $\overline{\disk}$ or not.

\begin{defn*}
Let $L$ be a line which intersects $\Omega$ and which does not contain $p$ or $q$, and let $\h{L}$ be a lift of $L$.
\begin{itemize}
\item We call $\h{L}$ a \emph{separating lift} if it separates $\h{p}$ from $\h{q}$ in $\overline{\disk}$.  The component of $\disk \smallsetminus \h{L}$ whose closure contains $\h{p}$ is called the \emph{$\h{p}$-side} of $\h{L}$, and the component of $\disk \smallsetminus \h{L}$ whose closure contains $\h{q}$ is called the \emph{$\h{q}$-side} of $\h{L}$.
\item We call $\h{L}$ a \emph{non-separating lift} if it does not separate $\h{p}$ from $\h{q}$ in $\overline{\disk}$.  The component of $\disk \smallsetminus \h{L}$ whose closure does not contain $\h{p},\h{q}$ is called the \emph{shadow} of $\h{L}$, denoted $\shadow(\h{L})$.
\end{itemize}
\end{defn*}

Given two lines $L_1,L_2$ which intersect $\Omega$, the \emph{distance} between $L_1$ and $L_2$ is the Hausdorff distance between $L_1 \cap \overline{D_\Omega}$ and $L_2 \cap \overline{D_\Omega}$ (recall that $D_\Omega$ is a fixed large disk in $\complex$ containing $\Omega$); that is, the infimum of all $\delta > 0$ such that each point of $L_1 \cap \overline{D_\Omega}$ is within $\delta$ of a point in $L_2 \cap \overline{D_\Omega}$, and vice versa.

\begin{defn*}
\begin{itemize}
\item Let $\h{V} \subset \disk$ be an open set which maps one-to-one under $\varphi$ to an open set $V \subset \Omega$, and let $L$ be a line which intersects $V$.  Let $\varepsilon > 0$, and assume $\varepsilon$ is small enough so that every line $L'$ which is within distance $\varepsilon$ of $L$ must also intersect $V$.  The family of all lifts $\h{L}'$, of such lines $L'$, which intersect $\h{V}$ will be called a \emph{basic open set of lifts of lines}, and denoted $\h{\mathcal{N}}(\h{L},\h{V},\varepsilon)$.
\item Let $\h{z} \in \disk$.  We say $\h{z}$ is a \emph{stable point} if there exists a basic open set of lifts of lines $\h{\mathcal{N}}(\h{L},\h{V},\varepsilon)$, each element of which is non-separating and contains $\h{z}$ in its shadow.
\end{itemize}
\end{defn*}

\begin{lem}
\label{lem:two lifts}
A point $\h{z} \in \disk$ is a stable point if and only if there are two intersecting non-separating lifts $\h{L}_1,\h{L}_2$ of distinct lines $L_1,L_2$ such that $\h{z}$ is in the shadow of both $\h{L}_1$ and $\h{L}_2$.

Furthermore, whenever $\h{L}_1,\h{L}_2$ are intersecting non-separating lifts of distinct lines, the point of intersection of $\h{L}_1$ and $\h{L}_2$ is also a stable point.
\end{lem}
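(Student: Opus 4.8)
The plan is to read the definition of stability as an openness condition: $\h{z}$ is stable exactly when the family of non-separating lifts whose shadow contains $\h{z}$ has nonempty interior, in the sense that it contains some entire basic open set $\h{\mathcal{N}}(\h{L},\h{V},\varepsilon)$. The forward implication (stable $\Rightarrow$ two crossing lifts) is the easy half. Given a witnessing basic open set $\h{\mathcal{N}}(\h{L},\h{V},\varepsilon)$, I would fix a point $w_0 \in L \cap V$ together with its unique preimage $\h{u} \in \h{V}$, and then choose two \emph{distinct} lines $L_1,L_2$ through $w_0$ whose directions differ from that of $L$ by small but distinct angles; for small enough angles these lie within distance $\varepsilon$ of $L$. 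Their lifts $\h{L}_1,\h{L}_2$ through $\h{u}$ then belong to $\h{\mathcal{N}}(\h{L},\h{V},\varepsilon)$, so each is non-separating and contains $\h{z}$ in its shadow; since $L_1 \neq L_2$ they are distinct lifts, meeting only at $\h{u}$ (where they cross transversally). This is exactly the desired pair.

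The substance is the reverse implication together with the ``furthermore''. Suppose $\h{L}_1,\h{L}_2$ are non-separating lifts of distinct lines $L_1,L_2$ crossing transversally at $\h{w} \in \disk$, and set $w = \varphi(\h{w})$, so $L_1,L_2$ meet at $w$. I would produce a single basic open set witnessing stability of the target point. Let $\h{V}$ be a small neighborhood mapping one-to-one onto a round disk $V \subset \Omega$, and take $\varepsilon$ small. For the equivalence, with target $\h{z} \in \shadow(\h{L}_1) \cap \shadow(\h{L}_2)$, center $\h{V}$ at $\h{w}$ and take $L = L_1$ (so $\h{L}_1$ itself lies in the family). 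For the furthermore, with target $\h{w}$, instead take $\h{V}$ around a nearby point on the $\{\h{p},\h{q}\}$-side of $\h{w}$ and let $L$ be the line, in the direction of $L_1$, through the $\varphi$-image of that point. In either case, because $\varphi$ is a local diffeomorphism at $\h{w}$ and every line $L'$ within $\varepsilon$ of $L$ is close to $L_1$ in direction and position, the lift $\h{L}'$ through $\h{V}$ meets $\h{L}_2$ transversally; and since distinct lifts of lines meet in at most one point, $\h{L}'$ crosses $\h{L}_2$ exactly once, at a point $\h{w}'$ near $\h{w}$.

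The crux is to show that each such $\h{L}'$ is non-separating and carries the target point in its shadow, and here the transverse crossing with $\h{L}_2$ is essential: it acts as a barrier pinning $\h{L}'$ near $\h{L}_1$ as $L'$ varies. Concretely, I would prove that as $L' \to L_1$ the arcs $\h{L}'$ converge to $\h{L}_1$ in $\overline{\disk}$, so in particular their endpoints on $\partial \disk$ converge to those of $\h{L}_1$; interior convergence is covering-map continuity, and the endpoint convergence is a Fatou--Riesz argument exactly like the one in \cref{lem:large lifts} (if the endpoints failed to converge, the $\h{L}'$ would accumulate on a nondegenerate boundary continuum on which $\overline{\varphi}$ would take a single value on a set of positive measure, contradicting \cref{thm:Riesz}). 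Granting this, non-separating is an \emph{open} condition: since $\h{p},\h{q}$ lie strictly on one side of $\h{L}_1$, they lie on one side of every nearby $\h{L}'$. For the equivalence, $\h{z}$ sits at positive distance from $\h{L}_1 \cup \h{L}_2$ inside $\shadow(\h{L}_1)$, hence remains in $\shadow(\h{L}')$ once the perturbation is smaller than that distance. For the furthermore I would argue purely topologically: $\h{L}'$ is non-separating and meets $\h{L}_2$ only at $\h{w}'$, so one of the two sub-arcs of $\h{L}_2 \smallsetminus \{\h{w}'\}$ lies in $\shadow(\h{L}')$, and the off-center choice of $\h{V}$ places $\h{w}'$ on the $\{\h{p},\h{q}\}$-side of $\h{w}$ along $\h{L}_2$, which forces $\h{w}$ onto the shadow sub-arc.

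I expect the main obstacle to be precisely this endpoint-convergence step, and in particular understanding why it requires \emph{two} crossing lifts rather than one. Without a transverse barrier, the endpoints of a perturbed line-lift can jump discontinuously, since the component of $L' \cap \Omega$ carrying the lift may change drastically; thus a lone non-separating lift with $\h{z}$ in its shadow need not be stable, and the crossing of $\h{L}_1$ with $\h{L}_2$ is exactly the device that traps the perturbed lift near $\h{L}_1$ and forces both open conditions to persist on an entire basic neighborhood. Making this barrier argument quantitative---tracking both ends of $\h{L}'$ relative to the endpoints of $\h{L}_2$ on $\partial \disk$ and combining it with the Fatou--Riesz estimate---is the technical heart of the proof.
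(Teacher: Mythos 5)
Your forward implication is fine (it matches the paper's one-line observation), but the reverse implication and the ``furthermore'' rest on a step that is genuinely false: the claim that as $L' \to L_1$ the lifts $\h{L}'$ converge to $\h{L}_1$ in $\overline{\disk}$, endpoints included, with the crossing of $\h{L}_2$ acting as a barrier. The failure mechanism is that the component of $L' \cap \Omega$ carrying $\h{L}'$ need not converge to the component of $L_1 \cap \Omega$ carrying $\h{L}_1$: it can jump to a strictly \emph{larger} limit when a boundary point blocks $L_1$ but not the nearby lines $L'$. For instance, let $\partial \Omega$ contain an isolated point $a$ lying on $L_1$ just beyond an endpoint of $\varphi(\h{L}_1)$ (a punctured disk with $L_1$ through the puncture is the model case). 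Lines $L'$ near $L_1$ pass to one side of $a$, and their components of $L' \cap \Omega$ overshoot it; the lifts $\h{L}'$ then accumulate not on a nondegenerate continuum in $\partial \disk$ --- so your Fatou--Riesz argument in the style of \cref{lem:large lifts} yields no contradiction --- but on $\h{L}_1 \cup \h{L}_1''$, where $\h{L}_1''$ is a lift of a \emph{different} component of $L_1 \cap \Omega$, attached to $\h{L}_1$ at the boundary point over $a$ (cf.\ \cref{thm:crosscut}(2)). The far endpoint of $\h{L}'$ thus jumps discontinuously to near the far endpoint of $\h{L}_1''$, which lies outside $\overline{\shadow(\h{L}_1)}$, on the boundary arc whose closure contains $\h{p}$ and $\h{q}$; if it lands between them, $\h{L}'$ is separating. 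The single crossing with $\h{L}_2$ pins only the middle of $\h{L}'$ and tells you the two endpoints lie on opposite sides of $\h{L}_2$; it does not keep the wandering endpoint out of the arc between $\h{p}$ and $\h{q}$. So with your choice $L = L_1$, the family $\h{\mathcal{N}}(\h{L}_1,\h{V},\varepsilon)$ can contain separating lifts for every $\varepsilon > 0$, and stability of $\h{z}$ (let alone of $\h{w}$) does not follow.

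The paper's proof repairs exactly this defect, and does so with no limiting argument at all: instead of perturbing $L_1$, choose the base line $L$ of the basic open set to be transversal to \emph{both} lines, crossing $L_1 \cap V$ and $L_2 \cap V$ but not containing $w$, with the lift $\h{L}$ through $\h{V}$ disjoint from $\shadow(\h{L}_1) \cap \shadow(\h{L}_2)$; take $\varepsilon$ small enough that these properties persist. Then every $\h{L}' \in \h{\mathcal{N}}(\h{L},\h{V},\varepsilon)$ crosses $\h{L}_1$ and $\h{L}_2$ once each inside $\h{V}$, and since distinct lifts meet at most once it can never recross either; hence one end of $\h{L}'$ is trapped in $\overline{\shadow(\h{L}_1) \smallsetminus \shadow(\h{L}_2)}$ and the other in $\overline{\shadow(\h{L}_2) \smallsetminus \shadow(\h{L}_1)}$. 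This forces $\h{L}'$ to be non-separating with $\shadow(\h{L}') \supseteq \shadow(\h{L}_1) \cap \shadow(\h{L}_2) \ni \h{z}$, and $\h{w} \in \shadow(\h{L}')$ as well, which proves the converse and the ``furthermore'' simultaneously. Your closing intuition --- that a transverse crossing is ``exactly the device that traps the perturbed lift'' --- is right, but each crossing traps only one end of $\h{L}'$: you need both lifts as barriers, one for each end, and that is only achieved by perturbing a chord through the corner rather than $L_1$ itself.
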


\begin{proof}
That any stable point has this property is immediate, since any basic open set of lifts of lines clearly contains pairs of distinct intersecting lifts.

Conversely, suppose $\h{L}_1,\h{L}_2$ are intersecting non-separating lifts of distinct lines $L_1 \supset \varphi(\h{L}_1), L_2 \supset \varphi(\h{L}_2)$, and let $\h{z}$ be any point in the shadow of both $\h{L}_1$ and $\h{L}_2$.  Let $\h{w}$ be the point of intersection of $\h{L}_1$ and $\h{L}_2$, and let $\h{V}$ be a neighborhood of $\h{w}$ which maps one-to-one under $\varphi$ to a round disk $V$ centered at $w = \varphi(\h{w})$.

\begin{figure}
\begin{center}
\includegraphics{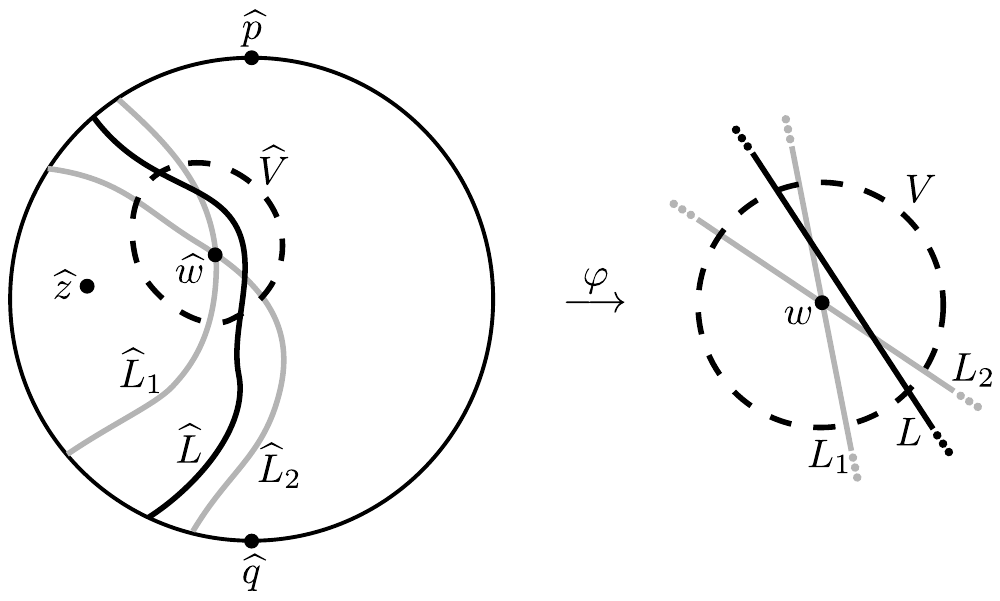}
\end{center}

\caption{The configuration of lines described in the proof of \cref{lem:two lifts}.}
\label{fig:two lifts}
\end{figure}

Consider a line $L$ not containing $w$ which intersects $L_1 \cap V$ and $L_2 \cap V$, and such that the lift $\h{L}$ of $L$ which intersects $\h{V}$ does not contain any point in the intersection of the shadows of $\h{L}_1$ and of $\h{L}_2$.  Let $\varepsilon > 0$ be small enough so that any line $L'$ within distance $\varepsilon$ of $L$ has these same properties.  Let $\h{L}'$ be an arbitrary element of the basic open set of lifts of lines $\h{\mathcal{N}}(\h{L},\h{V},\varepsilon)$; that is, $\h{L}'$ is the lift of a line $L'$ within distance $\varepsilon$ of $L$ such that $\h{L} \cap \h{V} \neq \emptyset$.  Since $\h{L}'$ intersects $\h{L}_1$ and $\h{L}_2$ inside $\h{V}$, it cannot cross them again, and so one endpoint of $\h{L}'$ is in the closure of $\shadow(\h{L}_1) \smallsetminus \shadow(\h{L}_2)$ and the other is in the closure of $\shadow(\h{L}_2) \smallsetminus \shadow(\h{L}_1)$ (see \cref{fig:two lifts}).  It follows that $\h{L}'$ is a non-separating lift and $\shadow(\h{L}')$ contains $\shadow(\h{L}_1) \cap \shadow(\h{L}_2)$ (in particular, it contains the point $\h{z}$), as well as the point $\h{w}$.  Thus $\h{z}$ and $\h{w}$ are both stable points.
\end{proof}

\begin{lem}
\label{lem:stable generic}
The set of stable points is a dense open subset of $\disk$.

Moreover, except for a countable set of lines, every line $L$ has the property that for each lift $\h{L}$ of $L$, the set of stable points in $\h{L}$ is a dense open subset of $\h{L}$.
\end{lem}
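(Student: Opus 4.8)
The plan is to prove the two assertions separately, obtaining openness almost immediately from \cref{lem:two lifts} and then reducing density to a statement about which points lie in the shadow of a non-separating lift. For openness, suppose $\h{z}$ is stable. By \cref{lem:two lifts} there are intersecting non-separating lifts $\h{L}_1,\h{L}_2$ with $\h{z}\in\shadow(\h{L}_1)\cap\shadow(\h{L}_2)$, and this set is open in $\disk$; moreover every one of its points lies in the shadow of the \emph{same} intersecting pair, hence is stable by \cref{lem:two lifts}. Thus the stable set is a union of such open intersections, so it is open. For density, the key reduction is that, by the ``furthermore'' of \cref{lem:two lifts}, the intersection point of any two intersecting non-separating lifts is stable; so it suffices to show every ball $\h{B}\subset\disk$ is crossed by two non-separating lifts meeting inside it.

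First I would control the ``core'' $G_0$ of points through which no line has a non-separating lift, via a configuration-space/parity argument. Fix $\h{c}\in\disk$ and let $\h{L}_\theta$ be the lift through $\h{c}$ of the line through $\varphi(\h{c})$ in direction $\theta$, with endpoints $e_1(\theta),e_2(\theta)\in\partial\disk$ (these are arcs by \cref{cor:line lift}). Since $\varphi$ is conformal, the tangent of $\h{L}_\theta$ at $\h{c}$ turns by exactly $\pi$ as $\theta$ runs over $[0,\pi]$, so the ordered endpoint pair returns swapped. Assuming $\h{p},\h{q}\in\partial\disk$ (the interior case is similar and easier), the pairs separating $\h{p}$ from $\h{q}$ form the \emph{disconnected} set $(A_1\times A_2)\sqcup(A_2\times A_1)$, where $A_1,A_2$ are the arcs of $\partial\disk\smallsetminus\{\h{p},\h{q}\}$; a path realizing the swap while never entering the non-separating region must therefore pass through the configuration $\{e_1,e_2\}=\{\h{p},\h{q}\}$. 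Hence each $\h{c}\in G_0$ lies on a lift joining $\h{p}$ to $\h{q}$, which projects to a line-component whose two endpoints are $p$ and $q$, forcing the supporting line to be the one through $p$ and $q$ (so $G_0=\emptyset$ in the loop case). Thus $G_0$ is contained in the countably many lifts of the segment $\overline{pq}$ and is nowhere dense.

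The main obstacle is to upgrade ``$\h{c}\notin G_0$ has a non-separating direction'' to ``some point of $\h{B}$ carries two intersecting non-separating lifts.'' Naively this is immediate because non-separation is an open condition, but $e_1(\theta),e_2(\theta)$ need not vary continuously with the line: a boundary endpoint of a line-component can jump when the line becomes tangent to $\partial\Omega$ or sweeps past a boundary feature, and I expect this to be the crux. I would argue by contradiction: shrink $\h{B}$ to avoid the nowhere-dense $G_0$ and suppose $\h{B}$ meets no stable point. Then no two non-separating lifts cross inside $\h{B}$, so the non-separating lifts issuing from points of $\h{B}$ are pairwise non-crossing there; being locally straight (they project to segments in the convex disk $\varphi(\h{B})$), they must foliate $\h{B}$ by parallel arcs of a single direction $\theta_0$, so every point of $\h{B}$ has $\theta_0$ as its unique non-separating direction. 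This contradicts the parity analysis, which at a point where the endpoints \emph{do} vary continuously forces the path $\theta\mapsto(e_1,e_2)$ to enter the open non-separating region along an interval of $\theta$, producing more than one non-separating direction. Securing enough continuity — i.e.\ that the endpoint jumps occur only along countably many lines, which is exactly where \cref{lem:large lifts} (finitely many large lifts per direction) and \cref{thm:crosscut} (continuity of small crosscuts) enter — is the hard part.

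Finally, for the ``moreover'', a line $L$ is exceptional precisely when some lift $\h{L}$ contains a nondegenerate subarc of non-stable points. Since the non-stable set is contained in $G_0$ together with the countable ``jump'' locus, both of which lie in countably many lifted straight segments, and since an arc covered by countably many arcs must share a subarc with one of them (Baire), such a subarc forces $\h{L}$ to coincide with that segment's lift and hence $L$ to be its supporting line; thus only countably many lines are exceptional. For a non-exceptional $L$, the stable points are open in each lift $\h{L}$ (being open in $\disk$) and dense in $\h{L}$ by localizing the crossing-lift construction of the previous paragraph to points of $\h{L}$.
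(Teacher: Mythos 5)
Your openness argument is fine and matches the paper, but the density argument has genuine gaps at exactly the point you flag as the crux, and the intermediate claims you lean on are not correct. First, the parity argument for $G_0$ is an intermediate-value argument, and it needs the map $\theta \mapsto (e_1(\theta), e_2(\theta))$ to be continuous; as the paper notes, the endpoints of $\h{L}(\h{z},\theta)$ move \emph{monotonically but not necessarily continuously} in $\partial\disk$. A single jump can carry the configuration from one component of the separating region $(A_1\times A_2)\sqcup(A_2\times A_1)$ to the other without ever passing through $\{e_1,e_2\}=\{\h{p},\h{q}\}$, so the conclusion that every $\h{c}\in G_0$ lies on a lift joining $\h{p}$ to $\h{q}$ (hence that $G_0$ sits inside lifts of $\overline{pq}$) is unjustified. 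Second, the foliation step is false as stated: pairwise non-crossing chords of a round convex disk need not be parallel (consider the pencil of lines through a point outside the disk, which covers the disk by pairwise disjoint chords), so you cannot extract a single direction $\theta_0$ and run your contradiction. Third, you explicitly defer ``securing enough continuity'' --- that endpoint jumps occur only along countably many lines --- but for an arbitrary compact $\partial\Omega$ this is not clear at all, and nothing in \cref{lem:large lifts} or \cref{thm:crosscut} obviously delivers it. Since the ``moreover'' part of your proposal rests on the containment of the non-stable set in $G_0$ plus the jump locus, it inherits all of these gaps (the Baire step itself is fine).

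The paper's proof shows how to avoid needing any continuity of the endpoint maps. Given a non-stable $\h{z}$, \cref{lem:two lifts} gives at most one non-separating lift through $\h{z}$, so monotonicity alone produces a single flip angle $\theta_0$: for $\alpha < \theta_0 < \beta$ close to $\theta_0$, the points $\h{p}$ and $\h{q}$ sit on opposite sides of $\h{L}(\h{z},\alpha)$ and on the reversed sides of $\h{L}(\h{z},\beta)$. The key device is then to move to a \emph{nearby} point $\h{w} \in \h{V}\smallsetminus\h{L}(\h{z},\theta_0)$ rather than seek a second non-separating lift through $\h{z}$ itself: choosing $\alpha,\beta$ so close to $\theta_0$ that the lines $L(w,\alpha), L(w,\beta)$ are disjoint from $L(z,\alpha), L(z,\beta)$ inside $\overline{D_\Omega}$, the lifts $\h{L}(\h{w},\alpha)$ and $\h{L}(\h{w},\beta)$ cannot cross the lifts at $\h{z}$, and the recorded positions of $\h{p},\h{q}$ trap both of them to be non-separating; since they meet at $\h{w}$, the furthermore clause of \cref{lem:two lifts} (which you also use as the final mechanism) makes $\h{w}$ stable. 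This simultaneously shows that \emph{all} non-stable points in $\h{V}$ lie on the single arc $\h{L}(\h{z},\theta_0)$, and the ``moreover'' statement then follows from a countable cover $\{\h{V}_i\}$ by such neighborhoods: any line $L$ outside the countable family $\{\varphi(\h{L}_i)\}$ has each lift $\h{L}$ meeting each exceptional arc $\h{L}_i$ in at most one (transversal) point, so the non-stable points of $\h{L}$ are isolated in it. In short, your overall shape (reduce everything to crossing pairs of non-separating lifts via \cref{lem:two lifts}) is right, but the missing idea is the perturbation to a nearby basepoint with two straddling directions, which is what lets the paper bypass the discontinuity problem you correctly identified but did not resolve.
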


\begin{proof}
It follows immediately from \cref{lem:two lifts} that the set of stable points is open in $\disk$.

Now let $\h{z} \in \disk$, and let $\h{V}$ be a neighborhood of $\h{z}$.  Suppose $\h{z}$ is not a stable point.  By shrinking $\h{V}$, we may assume that $\h{V}$ maps one-to-one under $\varphi$ to a disk $V$ centered at $z = \varphi(\h{z})$.

For a given $\theta \in \mathbb{R}$, let $L(z,\theta)$ denote the straight line through $z$ making angle $\theta$ with the positive real axis.  Let $\h{L}(\h{z},\theta)$ be the lift containing $\h{z}$ of the component of $L(z,\theta) \cap \Omega$ containing $z$.  Denote the endpoints of $\h{L}(\h{z},\theta)$ by $e^+(\h{z},\theta)$ and $e^-(\h{z},\theta)$, where $e^+(\h{z},\theta)$ corresponds to following the line $L(z,\theta)$ to $\partial \Omega$ in the direction $\theta$ from $z$, and $e^-(\h{z},\theta)$ corresponds to following the line $L(z,\theta)$ to $\partial \Omega$ in the direction $\theta + \pi$ from $z$.  As $\theta$ increases, the line $L(z,\theta)$ revolves about the point $z$.  Correspondingly, the lift $\h{L}(\h{z},\theta)$ ``revolves'' about $\h{z}$.  The endpoints of $\h{L}(\h{z},\theta)$ move monotonically in the circle $\partial \disk$, but not necessarily continuously, as $\theta$ increases.

Since $\h{z}$ is not stable, by \cref{lem:two lifts} there can be at most one non-separating lift of a line which contains $\h{z}$.  It follows that if we consider the line $L(z,\theta)$ and increase $\theta$ to revolve the line about $z$, at some moment the endpoint $e^+(\h{z},\theta)$ must cross or ``jump over'' $\h{p}$, and at that same moment $e^-(\h{z},\theta)$ must cross or ``jump over'' $\h{q}$.  That is, there exists $\theta_0$ such that for all $\alpha,\beta$ sufficiently close to $\theta_0$ with $\alpha < \theta_0 < \beta$, we have that $\h{p}$ is on the ``left'' side of the arc $\h{L}(\h{z},\alpha)$ (thinking of this arc as oriented from $e^-(\h{z},\alpha)$ to $e^+(\h{z},\alpha)$) and $\h{q}$ is on the ``right'' side, and $\h{p}$ is on the ``right'' side of $\h{L}(\h{z},\beta)$ and $\h{q}$ is on the ``left'' side (see \cref{fig:stable dense}).

Let $\h{w}$ be any point in $\h{V} \smallsetminus \h{L}(\h{z},\theta_0)$ and let $w = \varphi(\h{w})$.  Let $\alpha,\beta$ be as above and sufficiently close to $\theta_0$ so that the lines $L(w,\alpha)$ and $L(w,\beta)$ do not intersect either of the lines $L(z,\alpha)$ or $L(z,\beta)$ inside $\overline{D_\Omega}$ (recall that $D_\Omega$ is a fixed large disk in $\complex$ containing $\Omega$).  This means that the lifts $\h{L}(\h{w},\alpha)$ and $\h{L}(\h{w},\beta)$ do not cross either of the lifts $\h{L}(\h{z},\alpha)$ or $\h{L}(\h{z},\beta)$.  Because of the locations of $\h{p}$ and $\h{q}$ with respect to the lines $\h{L}(\h{z},\alpha)$ and $\h{L}(\h{z},\beta)$, it follows that $\h{L}(\h{w},\alpha)$ and $\h{L}(\h{w},\beta)$ are both non-separating lifts (see \cref{fig:stable dense}).  Hence, by \cref{lem:two lifts}, $\h{w}$ is a stable point.  Thus, the set of stable points is dense in $\disk$.

\begin{figure}
\begin{center}
\includegraphics{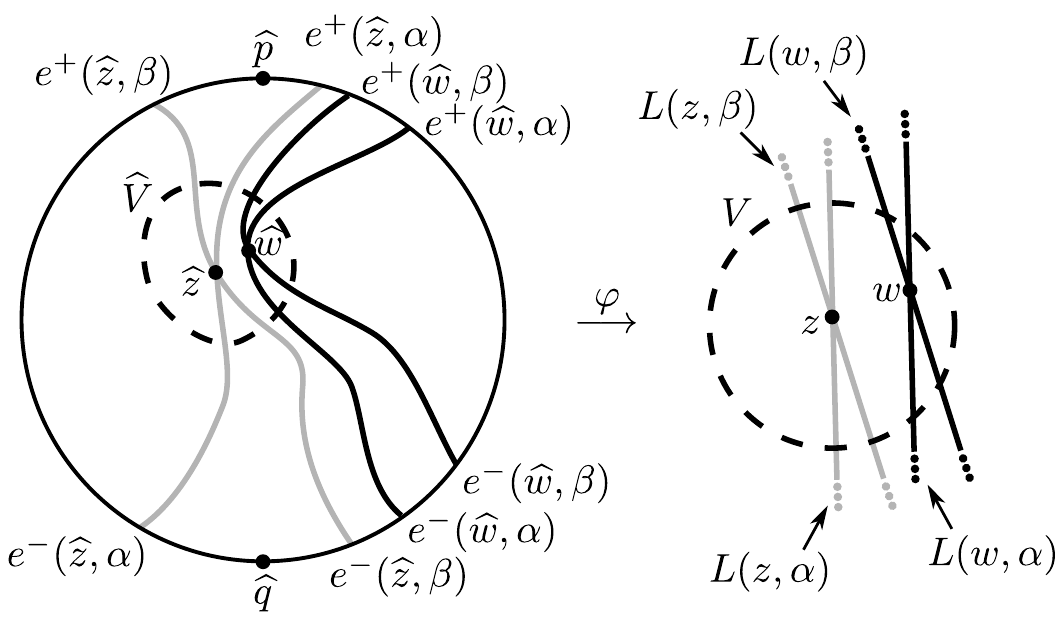}
\end{center}

\caption{The configuration of lines described in the proof of \cref{lem:stable generic}.}
\label{fig:stable dense}
\end{figure}

For the second statement, we may apply the above argument at each $\h{z} \in \disk$ to obtain a countable cover $\{\h{V}_i: i = 1,2,\ldots\}$ of $\disk$ by open sets with the property that for each $i$ there is at most one lift of a line, $\h{L}_i$, such that every point of $\h{V}_i \smallsetminus \h{L}_i$ is stable.  Consider the countable family of lines $\{\varphi(\h{L}_i): i = 1,2,\ldots\}$.  Let $L$ be any line not in this family, let $\h{L}$ be any lift of $L$, and let $\h{z} \in \h{L}$.  Choose $i$ so that $\h{z} \in \h{V}_i$.  If $\h{z}$ is not stable, it must be the (unique) point of intersection of $\h{L}$ and $\h{L}_i$, hence every other point in $\h{L} \cap \h{V}_i$ is stable.  Therefore the set of stable points in $\h{L}$ is a dense open subset of $\h{L}$.
\end{proof}

We remark that since in the proof of \cref{lem:stable generic} the point $\h{w}$ could be chosen on either side of $\h{L}(\h{z},\theta_0)$, it follows from \cref{prop:efficient characterization} and the proof of \cref{lem:stable generic} that if $\h{\gamma}_0$ is a lift of an efficient path in $\overline{[\gamma]}$ with endpoints $\h{p}$ and $\h{q}$, then $\h{\gamma}_0([0,1]) \cap \disk$ coincides with the set of non-stable points.  This observation will not be used in the proof of \cref{thm:main1}.

\subsection{Sequence of approximations of the efficient path}
\label{sec:sequence of approximations}

Let $\mathcal{L}$ be a countable dense (in the sense of Hausdorff distanct) family of distinct straight lines which intersect $\Omega$ and do not contain $p$ or $q$, and such that for each lift $\h{L}$ of a line $L \in \mathcal{L}$ the set of stable points in $\h{L}$ is a dense open subset of $\h{L}$ (this is possible by \cref{lem:stable generic}).  Let $\h{\mathcal{L}}$ denote the (countable) set of all lifts of lines in $\mathcal{L}$.  We enumerate the elements of this set: $\h{\mathcal{L}} = {\langle \h{L}_i \rangle}_{i=1}^\infty$.

We construct a sequence of paths $\gamma_i$, $i \geq 1$, by recursion.  To begin, let $\gamma_1 = \gamma$ and $\h{\gamma}_1 = \h{\gamma}$.  Having defined $\gamma_i$ and its lift $\h{\gamma}_i$, we define $\gamma_{i+1}$ and $\h{\gamma}_{i+1}$ as follows:

\begin{list}{\textbullet}{\leftmargin=2em \itemindent=0em}
\item If $\h{\gamma}_i^{-1}(\h{L}_i)$ has cardinality $\leq 1$, then put $\gamma_{i+1} = \gamma_i$ and $\h{\gamma}_{i+1} = \h{\gamma}_i$.  Otherwise, let $s_1$ and $s_2$ be the smallest and largest (respectively) $s \in [0,1]$ such that $\h{\gamma}_i(s) \in \h{L}_i$.  Let $\h{\gamma}_{i+1}$ be the path in $\disk$ (e.p.e.)\ defined by $\h{\gamma}_{i+1}(s) = \h{\gamma}_i(s)$ for $s \notin [s_1,s_2]$, and $\h{\gamma}_{i+1} {\upharpoonright}_{[s_1,s_2]}$ parameterizes the subarc of $\h{L}_i$ with endpoints $\h{\gamma}_i(s_1)$ and $\h{\gamma}_i(s_2)$ (or $\h{\gamma}_{i+1} {\upharpoonright}_{[s_1,s_2]}$ is constantly equal to $\h{w}$ if $\h{\gamma}_i(s_1) = \h{\gamma}_i(s_2) = \h{w}$).  Let $\gamma_{i+1} = \overline{\varphi} \circ \h{\gamma}_{i+1}$.
\end{list}

\begin{lem}
\label{lem:connected intersection}
Let $\widehat{L}$ be a lift of a line $L$.  If $i$ is such that $\h{\gamma}_i^{-1}(\h{L}) \subset [0,1]$ is connected (respectively, empty), then $\h{\gamma}_j^{-1}(\h{L})$ is connected (respectively, empty) for all $j \geq i$.

In particular, for all $j > i \geq 1$, $\h{\gamma}_j^{-1}(\h{L}_i)$ is connected (or empty).
\end{lem}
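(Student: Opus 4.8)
The plan is to reduce everything to a single step of the recursion and then iterate. Concretely, I would prove the one-step claim: \emph{for every lift $\h{L}$ of a line, if $\h{\gamma}_i^{-1}(\h{L})$ is connected (respectively empty), then so is $\h{\gamma}_{i+1}^{-1}(\h{L})$}. Granting this, the first assertion of the lemma follows by induction on $j \geq i$, and the ``in particular'' clause follows by first checking that $\h{\gamma}_{i+1}^{-1}(\h{L}_i)$ is connected and then applying the first assertion with index $i+1$. I would dispose of the trivial no-replacement step ($\h{\gamma}_i^{-1}(\h{L}_i)$ of cardinality at most $1$, where $\h{\gamma}_{i+1}=\h{\gamma}_i$) at once, and otherwise fix the replacement interval $[s_1,s_2]$ and the replacement arc $A = \h{\gamma}_{i+1}{\upharpoonright}_{[s_1,s_2]}$, a subarc of $\h{L}_i$. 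Since $L_i$ contains neither $p$ nor $q$, I would note that $\h{p},\h{q}\notin\h{L}_i$, whence $s_1,s_2\in(0,1)$ and the endpoints $\h{\gamma}_i(s_1),\h{\gamma}_i(s_2)$ lie in $\disk$; consequently $A \subset \h{L}_i\cap\disk$.

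First I would treat the case $\h{L}=\h{L}_i$. By the minimality of $s_1$ and maximality of $s_2$, the path $\h{\gamma}_i$ (hence $\h{\gamma}_{i+1}$) meets $\h{L}_i$ nowhere outside $[s_1,s_2]$, while on $[s_1,s_2]$ the path $\h{\gamma}_{i+1}=A$ runs entirely along $\h{L}_i$. Thus $\h{\gamma}_{i+1}^{-1}(\h{L}_i)=[s_1,s_2]$ is connected. This both settles this case and supplies the connectedness of $\h{\gamma}_{i+1}^{-1}(\h{L}_i)$ needed for the ``in particular'' clause.

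The main case is $\h{L}\neq\h{L}_i$. Here I would use, via \cref{cor:line lift}, that $\h{L}$ is an arc with endpoints on $\partial\disk$, so it separates $\overline{\disk}$ into two components $U^+,U^-$, together with the observation from \cref{sec:lifts of lines} that distinct lifts of lines meet in at most one point, which is a transversal crossing when it lies in $\disk$. Hence $A$ meets $\h{L}$ in at most one point $\h{z}^*$, crossing transversally there. Writing $P=\h{\gamma}_i^{-1}(\h{L})$ (connected by hypothesis) and using $\h{\gamma}_{i+1}=\h{\gamma}_i$ off $(s_1,s_2)$, I have
\[ \h{\gamma}_{i+1}^{-1}(\h{L}) = \bigl(P\cap([0,s_1]\cup[s_2,1])\bigr)\cup A^{-1}(\h{L}), \]
where $A^{-1}(\h{L})$ is empty, a single point $\{c\}$, or (if $A$ is constant equal to $\h{z}^*$) all of $[s_1,s_2]$. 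I would organize the verification by the position of $\h{\gamma}_i(s_1),\h{\gamma}_i(s_2)$ relative to $\h{L}$:
\begin{itemize}
\item If both endpoints lie strictly on one side of $\h{L}$ (in particular whenever $P=\emptyset$, since then the avoiding subpath $\h{\gamma}_i{\upharpoonright}_{[s_1,s_2]}$ keeps them on a common side), then $A$ cannot cross $\h{L}$, so $A^{-1}(\h{L})=\emptyset$; as $s_1,s_2\notin P$, the interval $P$ lies in exactly one of $[0,s_1),(s_1,s_2),(s_2,1]$, and in each case $\h{\gamma}_{i+1}^{-1}(\h{L})$ equals $\emptyset$ or $P$.
\item If the endpoints lie on opposite sides, then $A$ crosses $\h{L}$ at a single $c\in(s_1,s_2)$, and the original subpath $\h{\gamma}_i{\upharpoonright}_{[s_1,s_2]}$, joining the same oppositely placed endpoints in $\disk$, must also meet $\h{L}$; since $s_1,s_2\notin P$ and $P$ is connected, this forces $P\subseteq(s_1,s_2)$, so the first term vanishes and $\h{\gamma}_{i+1}^{-1}(\h{L})=\{c\}$.
\item In the boundary cases, where one or both endpoints equal $\h{z}^*\in\h{L}$, one checks that the surviving ``outside'' part of $P$ abuts the contribution of $A$ (e.g.\ $[a,s_1]\cup\{s_1\}$), so the union is again an interval.
\end{itemize}

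The hard part will be precisely the opposite-sides case: a priori the replacement arc $A$ might introduce a fresh crossing of $\h{L}$ lying far from the old preimage $P$, which would leave $\h{\gamma}_{i+1}^{-1}(\h{L})$ disconnected. I expect this to be resolved exactly as indicated above, by combining the single-crossing/transversality property of distinct lifts (so the new crossing is unique and forces the endpoints onto opposite sides) with the connectedness of $P$ (which, together with $s_1,s_2\notin P$, confines $P$ to $(s_1,s_2)$ and collapses the new preimage to the single point $\{c\}$). Once all cases yield an interval, the one-step claim — and hence the lemma — follows.
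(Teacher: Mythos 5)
Your proposal is correct and takes essentially the same route as the paper: reduce to the one-step claim and induct, dispose of the trivial and $\h{L}=\h{L}_i$ cases, and then exploit the fact that distinct lifts of lines meet in at most one point (crossing transversally when that point is in $\disk$) to run a finite case analysis showing the new preimage is again an interval or empty. The only cosmetic difference is organizational: you sort cases by the position of the endpoints $\h{\gamma}_i(s_1),\h{\gamma}_i(s_2)$ relative to $\h{L}$ and settle the empty case via the common-side observation, whereas the paper sorts by the position of $[s_1,s_2]$ relative to the preimage interval $[a,b]$ and handles the empty case with a non-separating-lift/shadow argument — both reduce to the same transversality fact, and your boundary cases correspond exactly to the bullet list the paper leaves to the reader.
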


\begin{proof}
Let $\widehat{L}$ be a lift of a line $L$, and fix $i$.  We will argue that if $\h{\gamma}_i^{-1}(\h{L}) \subset [0,1]$ is connected (respectively, empty), then $\h{\gamma}_{i+1}^{-1}(\h{L})$ is connected (respectively, empty).  The claim then follows by induction.  To this end, given the definition of $\h{\gamma}_{i+1}$, clearly we may assume $\h{L} \neq \h{L}_i$, so that either $\h{L} \cap \h{L}_i \cap \disk = \emptyset$ or $\h{L}$ and $\h{L}_i$ cross transversally in $\disk$.

Assume first that $\h{\gamma}_i([0,1]) \cap \h{L} = \emptyset$.  This means in particular that $\h{p},\h{q} \notin \h{L}$, $\h{L}$ is a non-separating lift, and $\h{\gamma}_i([0,1])$ is disjoint from the shadow of $\h{L}$.  If $\h{L}_i$ does not meet $\h{L}$ in $\disk$, then clearly $\h{\gamma}_{i+1}([0,1]) \cap \h{L} = \emptyset$ as well.  If $\h{L}_i$ meets $\h{L}$ in $\disk$, then these two lifts cross transversally.  If $\h{\gamma}_{i+1}([0,1])$ meets $\h{L}$, we must have that $\h{\gamma}_{i+1}([s_1,s_2]) \cap \h{L} \neq \emptyset$ where $s_1 = \min\{s \in [0,1]: \h{\gamma}_i(s) \in \h{L}_i\}$ and $s_2 = \max\{s \in [0,1]: \h{\gamma}_i(s) \in \h{L}_i\}$, since $\h{\gamma}_{i+1} = \h{\gamma}_i$ outside of $[s_1,s_2]$.  Since $\h{\gamma}_{i+1}([s_1,s_2])$ parameterizes a subarc of $\h{L}_i$, which crosses $\h{L}$ transversally, it follows that either $\h{\gamma}_{i+1}(s_1)$ or $\h{\gamma}_{i+1}(s_2)$ is in the shadow of $\h{L}$.  But $\h{\gamma}_{i+1}(s_1) = \h{\gamma}_i(s_1)$ and $\h{\gamma}_{i+1}(s_2) = \h{\gamma}_i(s_2)$, so this contradicts the assumption that $\h{\gamma}_i([0,1]) \cap \h{L} = \emptyset$.  Thus $\h{\gamma}_{i+1}([0,1]) \cap \h{L} = \emptyset$, as desired.

Now assume that $\h{\gamma}_i([0,1]) \cap \h{L} \neq \emptyset$, and $\h{\gamma}_i^{-1}(\h{L})$ is an interval $[a,b]$ (here we allow the possibility that $a = b$, which means $[a,b] = \{a\}$).  Suppose $\h{\gamma}_{i+1}$ is obtained from $\h{\gamma}_i$ by replacing the subpath of $\h{\gamma}_i$ from $s_1$ to $s_2$ ($0 < s_1 < s_2 < 1$) with a segment of $\h{L}_i$.  We leave it to the reader to confirm the following claims:
\begin{itemize}
\item If $s_2 \leq a$, then $\h{\gamma}_{i+1}^{-1}(\h{L}) = [a,b]$;
\item If $s_1 \leq a \leq s_2 \leq b$, then $\h{\gamma}_{i+1}^{-1}(\h{L}) = [s_2,b]$;
\item If $s_1 \leq a \leq b \leq s_2$, then $\h{\gamma}_{i+1}^{-1}(\h{L})$ is either empty (if the section $\h{\gamma}_{i+1}([s_1,s_2])$ of $\h{L}_i$ does not meet $\h{L}$), or consists of the single point where the section $\h{\gamma}_{i+1}([s_1,s_2])$ of $\h{L}_i$ crosses $\h{L}$;
\item It is not possible that $a \leq s_1 < s_2 \leq b$, since $\h{L} \neq \h{L}_i$ (unless $\h{\gamma}_i$ is constant on $[s_1,s_2]$, in which case $\h{\gamma}_{i+1} = \h{\gamma}_i$);
\item If $a \leq s_1 \leq b \leq s_2$, then $\h{\gamma}_{i+1}^{-1}(\h{L}) = [a,s_1]$; and
\item If $b \leq s_1$, then $\h{\gamma}_{i+1}^{-1}(\h{L}) = [a,b]$.
\end{itemize}
In all cases, $\h{\gamma}_{i+1}^{-1}(\h{L})$ is connected (possibly empty), as desired.
\end{proof}

\subsection{Convergence to the efficient path}
\label{sec:convergence}

For each $n = 1,2,\ldots$, we choose sets $\mathcal{G}_n$ of lines in $\mathcal{L}$ with the following properties:
\begin{itemize}
\item $\mathcal{G}_n$ is a finite subset of $\mathcal{L}$ for each $n$;
\item $\mathcal{G}_n \subset \mathcal{G}_{n+1}$ for each $n$;
\item each component of $\Omega \smallsetminus \bigcup \mathcal{G}_n$ has diameter less than $\frac{1}{n}$; and
\item if $G_1,G_2 \in \mathcal{G}_n$ are distinct lines and $\h{G}_1$ and $\h{G}_2$ are separating lifts of $G_1$ and $G_2$, respectively, then $\h{G}_1$ and $\h{G}_2$ are either disjoint or meet in a stable point.
\end{itemize}

Let $\h{\mathcal{G}}_n$ be the set of all lifts of lines in $\mathcal{G}_n$.  By \cref{lem:large lifts} only finitely many of the elements of $\h{\mathcal{G}}_n$ intersect the original lifted path $\h{\gamma}_1$, and among them are those that are separating lifts.  We denote the set of all separating lifts in $\h{\mathcal{G}}_n$ by $\h{\mathcal{G}}_n^s$.

We define an order $<$ on $\h{\mathcal{G}}_n^s$ as follows.  Let $\h{G}_1,\h{G}_2 \in \h{\mathcal{G}}_n^s$ be distinct separating lifts.

First suppose $\h{G}_1 \cap \h{G}_2 = \emptyset$.  Then $\h{G}_1 < \h{G}_2$ if $\h{G}_2$ is on the $\h{q}$-side of $\h{G}_1$; otherwise $\h{G}_2 < \h{G}_1$.

Now suppose $\h{G}_1 \cap \h{G}_2 = \{\h{z}\}$ for some stable point $\h{z} \in \disk$.  Since $\h{z}$ is stable, it follows from density of the family of lines $\mathcal{L}$ that there exists $\h{W} \in \h{\mathcal{L}}$ such that $\h{W}$ is non-separating and contains $\h{z}$ in its shadow.  Then $\h{G}_1 < \h{G}_2$ if $\h{G}_2 \smallsetminus \overline{\shadow(\h{W})}$ is on the $\h{q}$-side of $\h{G}_1$; otherwise $\h{G}_2 < \h{G}_1$.

It is straightforward to see that this relation $<$ is a well-defined linear order on $\h{\mathcal{G}}_n^s$.

For each $n = 1,2,\ldots$, choose a finite set $\h{\mathcal{W}}_n$ of lifts of lines from $\h{\mathcal{L}}$ such that for each pair of distinct intersecting separating lifts $\h{G}_1,\h{G}_2 \in \h{\mathcal{G}}_n^s$ there is a lift $\h{W} \in \h{\mathcal{W}}_n$ witnessing that $\h{G}_1 < \h{G}_2$ or that $\h{G}_2 < \h{G}_1$ (i.e.\ such that $\shadow(\h{W})$ contains $\h{G}_1 \cap \h{G}_2$).  Let $i(n)$ be large enough so that $\h{\mathcal{W}}_n \subset \{\h{L}_i: i = 1,\ldots,i(n)-1\}$, and also all of the (finitely many) elements of $\h{\mathcal{G}}_n$ which intersect the original lifted path $\h{\gamma}_1$ are contained in $\{\h{L}_i: i = 1,\ldots,i(n)-1\}$.

Take an arbitrary $n$, and let $\h{G}_1,\ldots,\h{G}_m$ enumerate the elements of $\h{\mathcal{G}}_n^s$, listed in $<$ order.  For any $i \geq i(n)$, by \cref{lem:connected intersection} the lifted path $\h{\gamma}_i$ does not cross any of the non-separating lifts in $\h{\mathcal{G}}_n \cup \h{\mathcal{W}}_n$, and has connected intersection with all the lifts that it meets, in particular with the elements of $\h{\mathcal{G}}_n^s$.  Thus there are $m+1$ components $\h{R}_1,\ldots,\h{R}_{m+1}$ of $\disk \smallsetminus \bigcup (\h{\mathcal{G}}_n \cup \h{\mathcal{W}}_n)$ such that for any $i \geq i(n)$, the lifted path $\h{\gamma}_i$ starts at $\h{p}$ then runs in $\h{R}_1$, until it crosses $\h{G}_1$ and enters $\h{R}_2$, then crosses $\h{G}_2$ and enters $\h{R}_3$, and so on, until it crosses $\h{G}_m$ to enter $\h{R}_{m+1}$ and ends at $\h{q}$ (see \cref{fig:regions}).

\begin{figure}
\begin{center}
\includegraphics{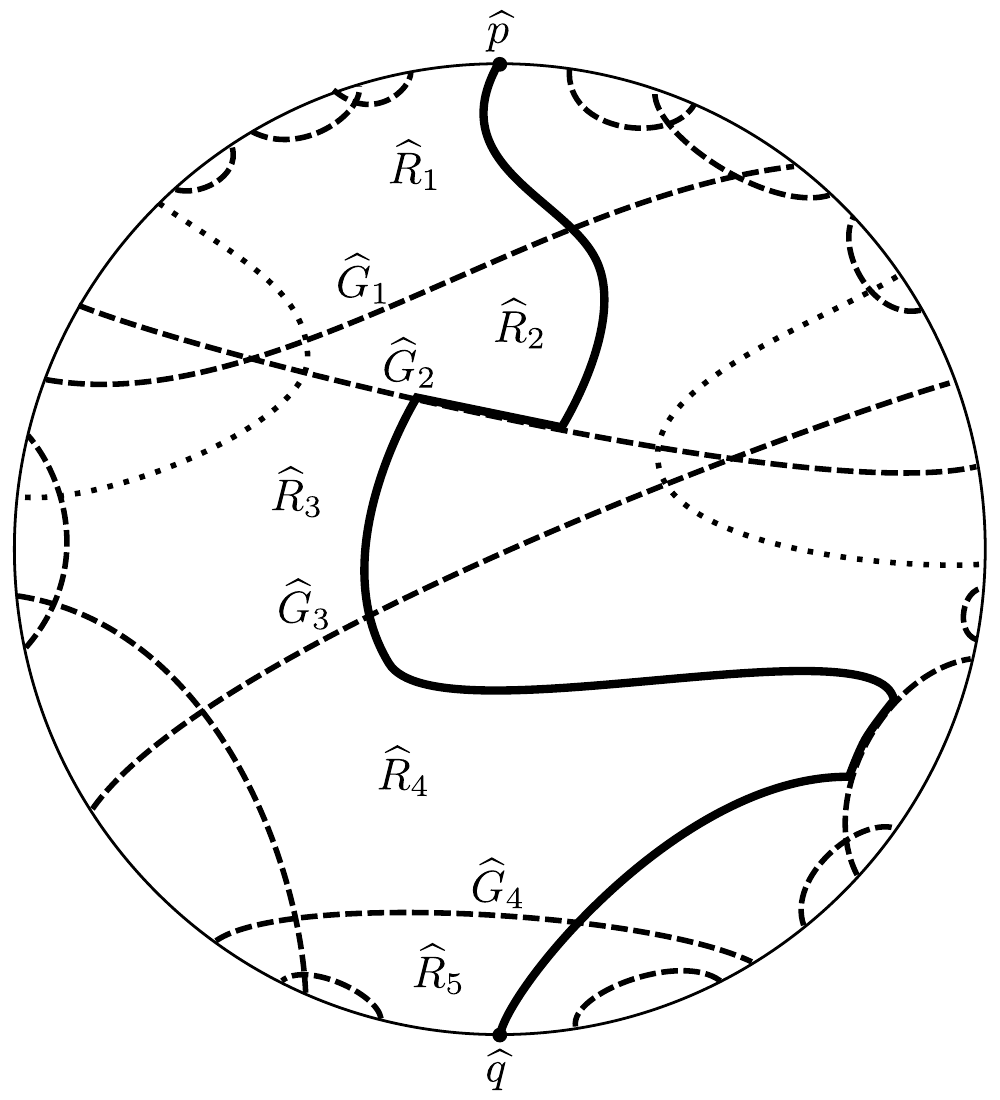}
\end{center}

\caption{An illustration depicting a possible configuration of the lifts of lines $\h{G}_k$ and regions $\h{R}_k$ in between, as described in \cref{sec:convergence}.  The thick curve represents a path $\h{\gamma}_i$ (for $i \geq i(n)$), the dashed arcs represent elements of $\h{\mathcal{G}}_n$ (including the separating lifts $\h{G}_k$), and the dotted arcs represent elements of $\h{\mathcal{W}}_n$.}
\label{fig:regions}
\end{figure}

For each of these regions $\h{R}_j$, $\varphi(\h{R}_j)$ is contained in a component of $\Omega \smallsetminus \bigcup \mathcal{G}_n$, hence has diameter less than $\frac{1}{n}$.  Therefore, provided we parameterize the path $\gamma_i$ so that if $\h{\gamma}_{i(n)}(t) \in \h{R}_j$ then $\h{\gamma}_i(t) \in \h{R}_j$ for all $i \geq i(n)$, we have that $\dsup(\gamma_{i_1},\gamma_{i_2}) < \frac{1}{n}$ for all $i_1,i_2 \geq i(n)$; in fact there is a homotopy between $\gamma_{i_1}$ and $\gamma_{i_2}$ which moves no point more than $\frac{1}{n}$, obtained by moving $\h{\gamma}_{i_1}(t)$ to $\h{\gamma}_{i_2}(t)$ within the closure of a region $\h{R}_j$ which contains $\h{\gamma}_{i(n)}(t)$.

We remark that according to \cref{prop:efficient characterization}, if $\lambda \in \overline{[\gamma]}$ is any efficient path, it must follow this same pattern traversing through the (closures of the) regions $\h{R}_1,\ldots,\h{R}_{m+1}$ in the same monotone order.  Therefore, any two such efficient paths, if parameterized appropriately, are within $\frac{1}{n}$ of each other.  Since $n$ is arbitrary, this establishes that there can be at most one efficient path in $\overline{[\gamma]}$ (up to reparameterization).

It remains to confirm that our construction above yields an efficient path in $\overline{[\gamma]}$.  Since $n$ was arbitrary, we now have that $\langle \gamma_i \rangle_{i=1}^\infty$ is a Cauchy sequence of paths, hence it converges to a path $\gamma_\infty$ from $p$ to $q$.  Moreover, by putting together the (smaller and smaller) homotopies between the paths in this sequence, we have that $\gamma_\infty \in \overline{[\gamma]}$.

\begin{lem}
\label{lem:reduced}
$\gamma_\infty$ is an efficient path in $\overline{[\gamma]}$.
\end{lem}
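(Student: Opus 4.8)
The plan is to verify the reformulation of efficiency in \cref{prop:efficient characterization}: since $\gamma_\infty \in \overline{[\gamma]}$, it has a lift $\h{\gamma}_\infty$ joining $\h{p}$ and $\h{q}$ (by \cref{thm:class closure lift}), and it suffices to show that $\h{\gamma}_\infty^{-1}(\h{L})$ is connected (possibly empty) for \emph{every} lift $\h{L}$ of \emph{every} line meeting $\Omega$. I will take $\h{\gamma}_\infty$ to be the uniform limit in $\overline{\disk}$ of the lifts $\h{\gamma}_i$: the homotopies built in \cref{sec:convergence} move $\h{\gamma}_{i_1}(t)$ to $\h{\gamma}_{i_2}(t)$ within the closure of a single region $\h{R}_j$, so the $\h{\gamma}_i$ form a uniformly Cauchy sequence of lifts and converge to a lift of $\gamma_\infty$ with endpoints $\h{p},\h{q}$.

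The whole statement reduces to one core claim: \emph{$\h{\gamma}_\infty$ meets the open shadow of no non-separating lift of any line.} I first prove this for a line $M \in \mathcal{L}$ and a non-separating lift $\h{M}$ of $M$, so that $\h{M} = \h{L}_i$ for some $i$. For each $j > i$, the set $\h{\gamma}_j^{-1}(\h{L}_i)$ is connected by \cref{lem:connected intersection}; since $\h{p},\h{q} \notin \overline{\shadow(\h{M})}$, any entry of $\h{\gamma}_j$ into the open shadow would force two crossings of $\h{M}$ with a point of $\shadow(\h{M})$ (hence off $\h{M}$) between them, contradicting connectedness. Thus $\h{\gamma}_j([0,1])$ avoids the open shadow for all $j > i$, i.e.\ lies in the closed set $\overline{\disk} \smallsetminus \shadow(\h{M})$; passing to the uniform limit, $\h{\gamma}_\infty$ also avoids $\shadow(\h{M})$. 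This step is exactly where the subtlety lies, and it is the main obstacle: uniform convergence by itself would permit the limit to \emph{touch} a lift tangentially, creating a disconnected preimage out of nowhere, and only the ``stays in the closed non-shadow side'' phrasing survives the limit cleanly.

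To extend the core claim to an arbitrary line $M$ and non-separating lift $\h{M}$, suppose $\h{\gamma}_\infty(t') \in \shadow(\h{M})$. Entering an open shadow is an open condition in the line: using a basic open set of lifts of lines $\h{\mathcal{N}}(\h{M},\h{V},\varepsilon)$ and density of $\mathcal{L}$, I can find $M' \in \mathcal{L}$ with a non-separating lift $\h{M}'$ close to $\h{M}$ for which $\h{\gamma}_\infty(t')$ still lies in $\shadow(\h{M}')$, contradicting the previous paragraph. Hence the core claim holds for all lines.

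Finally I deduce connected preimages. Suppose some $\h{\gamma}_\infty^{-1}(\h{L})$ is disconnected, say $\h{\gamma}_\infty(t_1),\h{\gamma}_\infty(t_2) \in \h{L}$ but $\h{\gamma}_\infty(t') \notin \h{L}$ for some $t_1 < t' < t_2$, with $\h{\gamma}_\infty(t')$ lying strictly on one side $U$ of $\h{L}$. Translating the underlying line slightly into $U$ produces a nearby line whose relevant lift $\h{L}'$ is a crosscut cutting off a small region containing $\h{\gamma}_\infty(t')$ but not $\h{p},\h{q}$; thus $\h{L}'$ is non-separating and $\h{\gamma}_\infty(t') \in \shadow(\h{L}')$, contradicting the core claim. (This shifting manoeuvre simultaneously handles excursions into the shadow of $\h{L}$ itself and tangential touches from the outside, and, for separating $\h{L}$, any double crossing.) Therefore $\h{\gamma}_\infty^{-1}(\h{L})$ is connected for every lift $\h{L}$, and by \cref{prop:efficient characterization} the path $\gamma_\infty$ is efficient in $\overline{[\gamma]}$.
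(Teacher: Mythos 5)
Your first two steps are sound, and your paragraph proving the core claim for lines in $\mathcal{L}$ is actually a nice repair of a subtlety the paper itself glosses: connectedness of preimages does not pass to uniform limits, whereas ``the image lies in the closed set $\overline{\disk} \smallsetminus \shadow(\h{M})$'' does. (The paper's own proof is one step shorter: it perturbs a witness of disconnection of $\h{\gamma}_\infty^{-1}(\h{L})$ directly to a line $L' \in \mathcal{L}$ and contradicts \cref{lem:connected intersection} at a finite stage $j > i$; implicitly $L'$ must be chosen so that $\h{\gamma}_\infty$ has points strictly on both sides of $\h{L}'$ in alternating order, an open condition that survives uniform convergence and forces $\h{\gamma}_j^{-1}(\h{L}')$ to be disconnected for large $j$.) Your third paragraph is under-justified in the same way the paper's density step is --- endpoints of lifts do not vary continuously with the line, so ``entering an open shadow is an open condition in the line'' needs an argument --- but I would not call that fatal by the paper's own standard.

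The genuine gap is your final paragraph: the implication ``$\h{\gamma}_\infty$ meets no open shadow $\Rightarrow$ every $\h{\gamma}_\infty^{-1}(\h{L})$ is connected'' is \emph{false} for general paths in $\overline{[\gamma]}$, so it cannot be established by any local translating manoeuvre that uses nothing else about $\h{\gamma}_\infty$. Counterexample: let $\lambda$ be the efficient path with lift $\h{\lambda}$, and let $\mu$ traverse $\lambda$, retrace a middle portion backwards, then forwards again. Concatenating the corresponding pieces of $\h{\lambda}$ gives a lift $\h{\mu}$ from $\h{p}$ to $\h{q}$, so $\mu \in \overline{[\gamma]}$ by \cref{thm:class closure lift}. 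No point of $\h{\lambda}([0,1])$ lies in the shadow of any non-separating lift $\h{W}$: since $\h{p},\h{q} \notin \overline{\shadow(\h{W})}$ and $\overline{\disk} \smallsetminus \h{W}$ has exactly two components with common frontier $\h{W}$, visiting $\shadow(\h{W})$ would force $\h{\lambda}$ to meet $\h{W}$ before and after, with a point off $\h{W}$ in between, making $\h{\lambda}^{-1}(\h{W})$ disconnected and contradicting \cref{prop:efficient characterization}; and points of $\partial\disk$ lie in no shadow at all. So $\h{\mu}$ avoids every open shadow, yet any lift crossed once by $\h{\lambda}$ in the retraced stretch is crossed three times by $\h{\mu}$, so $\h{\mu}^{-1}$ of it is disconnected. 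This shows concretely why your translation step cannot be completed: for $\h{z}' = \h{\gamma}_\infty(t')$ lying on such a lift there is \emph{no} crosscut cutting $\h{z}'$ off from both $\h{p}$ and $\h{q}$ (it would have to cross an efficient lift twice), and your argument never explains why the lift of the translated line --- whose endpoints can jump --- cuts off a region containing $\h{z}'$ with $\h{p},\h{q}$ outside its closure; nor does it address $\h{\gamma}_\infty(t') \in \partial\disk$, a point that belongs to no open shadow. To close the gap you must use more than shadow-avoidance, e.g.\ the finite-stage information as in the paper: perturb the disconnection witness to $L' \in \mathcal{L}$ so that $\h{\gamma}_\infty$ strictly crosses $\h{L}'$ to both sides in alternating order, and contradict \cref{lem:connected intersection} for $\h{\gamma}_j$ with $j$ large.
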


\begin{proof}
This follows from \cref{prop:efficient characterization}: if $L$ is any line intersecting $\Omega$ which has a lift $\h{L}$ such that $\h{\gamma}_\infty([0,1]) \cap \h{L}$ is not connected, then by density of the family $\mathcal{L}$ it is easy to see there must be a line $L' \in \mathcal{L}$ close to $L$, with a lift $\h{L}'$ close to $\h{L}$, such that $\h{\gamma}_\infty([0,1]) \cap \h{L}'$ is also not connected.  But if $\h{L}' = \h{L}_i$ in the enumeration of $\h{\mathcal{L}}$, then for all $j > i$, $\h{\gamma}_j([0,1]) \cap \h{L}'$ is connected according to \cref{lem:connected intersection}.  Since $\h{\gamma}_i \to \h{\gamma}_\infty$, the same must be true for $\h{\gamma}_\infty$, a contradiction.
\end{proof}

This completes the proof of \cref{thm:main1}.

\section{Proof of \cref{thm:main2}}
\label{sec:proof main2}

Having established the existence of a unique efficient path $\lambda$ in $\overline{[\gamma]}$, we now prove \cref{thm:main2} using \cref{thm:main1} and its proof.  In particular, we observe that in the construction of the efficient path in the proof of \cref{thm:main1}, we started with a path in $[\gamma]$ (such as $\gamma$ itself) and constructed a sequence of paths converging to the efficient path, where each path in the sequence is obtained from the previous one by replacing a subpath with a straight segment.  Such a replacement will always decrease the $\length$ length of the path and also, when finite, the Euclidean length.

It is clear that any path in $\overline{[\gamma]}$ of smallest $\length$ length (or Euclidean length if finite) must be efficient, because replacing a subpath with a straight segment decreases the length.  Thus we have (3) $\Rightarrow$ (2) and (4) $\Rightarrow$ (2).

For (2) $\Rightarrow$ (3), suppose for a contradiction that there is a path $\rho \in \overline{[\gamma]}$ with strictly smaller $\length$ length than the efficient path $\lambda \in \overline{[\gamma]}$.  By continuity of the function $\length$, it follows that there exists $\rho' \in [\gamma]$ whose $\length$ length is also strictly smaller than $\lambda$.  If we follow the construction in the proof of \cref{thm:main1} starting with $\rho'$ instead of $\gamma$, we would then obtain an efficient path in $\overline{[\gamma]}$ of $\length$ length strictly smaller than $\lambda$, contradicting the uniqueness of the efficient path.

For (2) $\Rightarrow$ (4), suppose for a contradiction that there is a path $\rho \in \overline{[\gamma]}$ with finite Euclidean length $E$ which is strictly smaller than the Euclidean length of the efficient path $\lambda \in \overline{[\gamma]}$.  This means there exists $\varepsilon > 0$ and values $0 = s_0 < s_1 < \cdots < s_N = 1$ such that $\sum_{i=1}^N |\lambda(s_i) - \lambda(s_{i-1})| > E + \varepsilon$.  Let $\h{\gamma}$ be a lift of $\gamma$ with endpoints $\h{p},\h{q} \in \overline{\disk}$, and let $\h{\lambda}$ and $\h{\rho}$ be lifts of $\lambda$ and $\rho$ with the same endpoints $\h{p},\h{q}$.

We will replace sections of the path $\rho$ with straight line segments to obtain a new path $\rho' \in \overline{[\gamma]}$ which goes within $\frac{\varepsilon}{2N}$ of each of the points $\lambda(s_i)$, in order.  To obtain $\rho'$, we proceed as follows.  For each $i = 1,\ldots,N-1$, consider the point $\h{\lambda}(s_i) \in \overline{\disk}$.
\begin{itemize}
\item If $\h{\lambda}(s_i) \in \disk$, by \cref{lem:stable generic} we can choose two stable points $\h{w}_1,\h{w}_2$ in a small neighborhood $\h{V}$ of $\h{\lambda}(s_i)$ which projects one-to-one under $\varphi$ to a small disk centered at $\lambda(s_i)$ of radius less than $\frac{\varepsilon}{2N}$, so that $\h{w}_1$ and $\h{w}_2$ are on opposite sides of $\h{\lambda}([0,1]) \cap \h{V}$.  Let $\h{L}_1$ and $\h{L}_2$ be non-separating lifts of lines such that $\h{w}_1 \in \shadow(\h{L}_1)$ and $\h{w}_2 \in \shadow(\h{L}_2)$.  By \cref{prop:efficient characterization}, $\h{\lambda}$ does not cross $\h{L}_1$ or $\h{L}_2$.  It follows that if we replace the section of $\h{\rho}$ between its first and last intersections with $\h{L}_1$ with an arc in $\h{L}_1$, and likewise for $\h{L}_2$, then the resultant path must meet $\h{V}$.
\item If $\h{\gamma}(s_i) \in \partial \disk$, choose an arc $\h{A}$ such that $\overline{\varphi}(\h{A})$ has diameter less than $\frac{\varepsilon}{2N}$, with endpoints $\h{\lambda}(s_i)$ and a stable point $\h{w} \in \disk$.  Let $\h{L}$ be a lift of a line such that $\h{w} \in \shadow(\h{L})$.  By \cref{prop:efficient characterization}, $\h{\lambda}$ does not cross $\h{L}$.  It follows that if we replace the section of $\h{\rho}$ between its first and last intersections with $\h{L}$ with an arc in $\h{L}$, then the resultant path must meet $\h{A}$.
\end{itemize}

After making the finitely many replacements of subpaths of $\h{\rho}$ with segments of lifts of lines as described above, the resultant path $\h{\rho}'$ is such that $\rho' = \overline{\varphi} \circ \h{\rho}'$ is continuous, and $\rho' \in \overline{[\gamma]}$ by \cref{thm:class closure lift}.  Clearly the Euclidean length of $\rho'$ is not greater than $E$.  On the other hand, by construction, there are values $0 = s_0' < s_1' < \cdots < s_N' = 1$ such that $|\rho'(s_i') - \lambda(s_i)| < \frac{\varepsilon}{2N}$ for each $i = 0,\ldots,N$.  It follows that the Euclidean length of $\rho'$ is at least
\begin{align*}
\sum_{i=1}^N |\rho'(s_i') - \rho'(s_{i-1}')| &> \sum_{i=1}^N \left( |\lambda(s_i) - \lambda(s_{i-1})| - 2 \cdot \frac{\varepsilon}{2N} \right) \\
&= \left( \sum_{i=1}^N |\lambda(s_i) - \lambda(s_{i-1})| \right) - \varepsilon \\
&> (E + \varepsilon) - \varepsilon = E ,
\end{align*}
a contradiction.  Therefore, $\lambda$ has smallest Euclidean length among all paths in $\overline{[\gamma]}$.

The proof that (1) $\Rightarrow$ (2) is straightforward and left to the reader.

Finally, for (2) $\Rightarrow$ (1), let $0 < s_1 < s_2 < 1$ and consider the path $\lambda {\upharpoonright}_{[s_1,s_2]}$.  We claim that there exist $s_1',s_2'$ such that $0 < s_1' \leq s_1 < s_2 \leq s_2' < 1$ and $\h{\lambda}(s_1')$ and $\h{\lambda}(s_2')$ can be joined by a path in $\overline{\disk}$ which projects under $\overline{\varphi}$ to a path of finite Euclidean length.  We rely on the fact that any two points $\h{w}_1,\h{w}_2 \in \disk$ can be joined by a path in $\disk$ which projects to a path of finite Euclidean length (e.g.\ a piecewise linear path).  If $\h{\lambda}([0,s_1]) \cap \disk \neq \emptyset$, then let $0 < s_1' \leq s_1$ be such that $\h{\lambda}(s_1') \in \disk$, and let $\h{w}_1 = \h{\lambda}(s_1')$.  If $\h{\lambda}([0,s_1]) \subset \partial \disk$, we choose $s_1'$ and $\h{w}_1$ as follows.  Note that in this case $\lambda(s_1)$ is not isolated in $\partial \Omega$, so we can apply \cref{thm:crosscut} to obtain a component $\h{C}_1$ of $\varphi^{-1}(\partial B(\lambda(s_1),\varepsilon))$, for some sufficiently small $\varepsilon > 0$, whose closure is an arc which separates $\h{\lambda}(s_1)$ from $\h{p}$ in $\overline{\disk}$.  In particular, there exists $0 < s_1' < s_1$ such that $\h{\lambda}(s_1')$ is in the closure of $\h{C}_1$.  Let $\h{w}_1$ be any point in $\h{C}_1$.  Thus, we have established that there exists $0 < s_1' \leq s_1$ and $\h{w}_1 \in \disk$ such that either $\h{\lambda}(s_1') = \h{w}_1$ or $\h{\lambda}(s_1')$ can be joined to $\h{w}_1$ by a path in $\overline{\disk}$ which projects to a circular arc.  By the same reasoning, there exists $s_2 \leq s_2' < 1$ and $\h{w}_2 \in \disk$ such that either $\h{\lambda}(s_2') = \h{w}_2$ or $\h{\lambda}(s_2')$ can be joined to $\h{w}_2$ by a path in $\overline{\disk}$ which projects to a circular arc.  It follows that $\h{\lambda}(s_1')$ can be joined to $\h{\lambda}(s_2')$ by a path in $\overline{\disk}$ which projects to a path of finite Euclidean length.  Therefore, by \cref{cor:locally shortest well-defined}, there exists a path of finite Euclidean length in $\overline{[\lambda^\gamma_{[s_1',s_2']}]}$.

Since $\lambda$ is efficient, it follows from \cref{prop:efficient characterization} that $\lambda {\upharpoonright}_{[s_1',s_2']}$ is the efficient path in $\overline{[\lambda^\gamma_{[s_1',s_2']}]}$, hence has smallest Euclidean length in this class according to the implication (2) $\Rightarrow$ (4).  In particular, $\lambda {\upharpoonright}_{[s_1',s_2']}$ has finite Euclidean length in light of the previous paragraph.  This in turn implies that the subpath $\lambda {\upharpoonright}_{[s_1,s_2]}$ has finite Euclidean length.  Again by \cref{prop:efficient characterization}, we have that $\lambda {\upharpoonright}_{[s_1,s_2]}$ is the efficient path in $\overline{[\lambda^\gamma_{[s_1,s_2]}]}$, hence has smallest Euclidean length in this class according to the implication (2) $\Rightarrow$ (4), as required.  Therefore, $\lambda$ is locally shortest.

\bibliographystyle{amsplain}
\bibliography{ShortestPath}

\providecommand{\bysame}{\leavevmode\hbox to3em{\hrulefill}\thinspace}
\providecommand{\MR}{\relax\ifhmode\unskip\space\fi MR }
% \MRhref is called by the amsart/book/proc definition of \MR.
\providecommand{\MRhref}[2]{%
  \href{http://www.ams.org/mathscinet-getitem?mr=#1}{#2}
}
\providecommand{\href}[2]{#2}
\begin{thebibliography}{10}

\bibitem{Ahlfors1973}
Lars~V. Ahlfors, \emph{Conformal invariants: topics in geometric function
  theory}, McGraw-Hill Book Co., New York-D\"usseldorf-Johannesburg, 1973,
  McGraw-Hill Series in Higher Mathematics. \MR{0357743}

\bibitem{BourginRenz1989}
Richard~D. Bourgin and Peter~L. Renz, \emph{Shortest paths in simply connected
  regions in {${\bf R}^2$}}, Adv. Math. \textbf{76} (1989), no.~2, 260--295.
  \MR{1013673}

\bibitem{CannonConnerZastrow2002}
J.~W. Cannon, G.~R. Conner, and Andreas Zastrow, \emph{One-dimensional sets and
  planar sets are aspherical}, Topology Appl. \textbf{120} (2002), no.~1-2,
  23--45, In memory of T. Benny Rushing. \MR{1895481}

\bibitem{Conway1995}
John~B. Conway, \emph{Functions of one complex variable. {II}}, Graduate Texts
  in Mathematics, vol. 159, Springer-Verlag, New York, 1995. \MR{1344449}

\bibitem{Fatou06}
P.~Fatou, \emph{S\'eries trigonom\'etriques et s\'eries de {T}aylor}, Acta
  Math. \textbf{30} (1906), no.~1, 335--400. \MR{1555035}

\bibitem{HOT2018-1}
Logan~C. Hoehn, Lex~G. Oversteegen, and Edward~D. Tymchatyn, \emph{A canonical
  parameterization of paths in $\mathbb{R}^n$}, Houston J. Math., to appear.

\bibitem{HOT2018-2}
\bysame, \emph{Extending isotopies of planar compacta}, Trans. Amer. Math.
  Soc., to appear.

\bibitem{Kuratowski1968}
K.~Kuratowski, \emph{Topology. {V}ol. {II}}, New edition, revised and
  augmented. Translated from the French by A. Kirkor, Academic Press, New York,
  1968.

\bibitem{Lindelof1915}
E.~Lindel\"{o}f, \emph{Sur un principe g\'{e}n\'{e}rale de l'analyse et ses
  applications \`{a} la th\'{e}orie de la repr\'{e}sentation conforme}, Acta
  Soc. Sci. Fennicae \textbf{46} (1915), 1--35.

\bibitem{Morse1936}
Marston Morse, \emph{A special parameterization of curves}, Bull. Amer. Math.
  Soc. \textbf{42} (1936), no.~12, 915--922. \MR{1563464}

\bibitem{Pommerenke1992}
Ch. Pommerenke, \emph{Boundary behaviour of conformal maps}, Grundlehren der
  Mathematischen Wissenschaften [Fundamental Principles of Mathematical
  Sciences], vol. 299, Springer-Verlag, Berlin, 1992. \MR{1217706}

\bibitem{Riesz16}
F.~Riesz and M.~Riesz, \emph{\"{U}ber die {R}andwerte einer analytischen
  {F}unction 4}, Cong. Scand. Math. Stockholm (1916), 87--95.

\bibitem{Riesz23}
Freidrich Riesz, \emph{\"{U}ber die {R}andwerte einer analytischen {F}unktion},
  Math. Z. \textbf{18} (1923), no.~1, 87--95. \MR{1544621}

\bibitem{Silverman1969}
Edward Silverman, \emph{Equicontinuity and {$n$}-length}, Proc. Amer. Math.
  Soc. \textbf{20} (1969), 483--486. \MR{0237719 (38 \#6000)}

\end{thebibliography}

\end{document}